\newtheorem{thm}{Theorem}[section]
\newtheorem{la}[thm]{Lemma}
\newtheorem{Defn}[thm]{Definition}
\newtheorem{Remark}[thm]{Remark}
\newtheorem{Note}[thm]{Note}
\newtheorem{prop}[thm]{Proposition}
\newtheorem{cor}[thm]{Corollary}
\newtheorem{Example}[thm]{Example}
\newtheorem{Examples}[thm]{Examples}
\newtheorem{Problems}[thm]{Problems}
\newtheorem{Problem}[thm]{Problem}
\newtheorem{Convention}[thm]{Convention}
\newtheorem{Number}[thm]{\!\!}
\newenvironment{rem}{\begin{Remark}\rm}{\end{Remark}}
\newenvironment{numba}{\begin{Number}\rm}{\end{Number}}
\newenvironment{proof}{{\noindent\bf Proof.}}%
                  {\nopagebreak\hspace*{\fill}$\Box$\medskip\medskip\par}   
\newcommand{\Punkt}{\nopagebreak\hspace*{\fill}$\Box$}
\newcommand{\wb}{\overline}
\newcommand{\ve}{\varepsilon}
\newcommand{\at}{\symbol{'100}}
\newcommand{\wt}{\widetilde}
\newcommand{\mto}{\mapsto}
\newcommand{\isom}{\cong}
\DeclareMathOperator{\Ad}{Ad}
\newcommand{\N}{{\mathbb N}}
\newcommand{\R}{{\mathbb R}}
\newcommand{\K}{{\mathbb K}}
\newcommand{\C}{{\mathbb C}}
\DeclareMathOperator{\ad}{ad}
\newcommand{\cO}{{\cal O}}
\newcommand{\cg}{{\mathfrak g}}
\newcommand{\dl}{{\displaystyle \lim_{\longrightarrow}}}
\newcommand{\wh}{\widehat}
\newcommand{\sub}{\subseteq}
\DeclareMathOperator{\im}{im}
\DeclareMathOperator{\id}{id}
\newcommand{\cA}{{\cal A}}
\newcommand{\cR}{{\cal R}}
\newcommand{\cT}{{\cal T}}
\DeclareMathOperator{\Repart}{Re}
\DeclareMathOperator{\Supp}{supp}
\begin{document}
$\;$\\[-27mm]
\begin{center}
{\Large\bf Continuity of LF-algebra representations\\[2mm]
associated to representations of Lie groups}\\[3mm]
{\bf Helge Gl\"{o}ckner}\footnote{Supported by DFG,
project GL 357/5-2.}
\end{center}
\begin{abstract}\vspace{.3mm}\noindent
Let $G$ be a finite-dimensional Lie group
and $E$ be a locally convex
topological $G$-module. If $E$ is sequentially complete,
then $E$ and the space $E^\infty$ of smooth
vectors are
$C^\infty_c(G)$-modules, but the module\linebreak
multiplication
need not be continuous.
The pathology can be ruled out if~$E$ is (or embeds into) a projective limit of
Banach $G$-modules.
Moreover, in this case
$E^\omega$ (the space of analytic vectors)
is a module for the algebra $\cA(G)$ of
superdecaying analytic functions
introduced\linebreak
by Gimperlein, Kr\"{o}tz and Schlichtkrull.
We prove that $E^\omega$ is a\linebreak
\emph{topological} $\cA(G)$-module
if $E$ is a Banach space or, more generally,
if every countable set
of continuous seminorms on~$E$ has an upper bound.
The same conclusion is obtained if $G$ has a compact Lie algebra.
The question of whether
$C^\infty_c(G)$ and $\cA(G)$
are topological algebras is also addressed.\vspace{.5mm}
\end{abstract}
{\footnotesize {\em Classification}:
22D15, 46F05 (Primary); 22E30, 42A85, 46A13, 46E25.\\[1mm]
%
%
%22D15,
% group algs of lcp gps
%
%46F05 (Primary);
% test functions
%
%22E30,
% ana on real and complex Lie groups
%
%42A85,
% convolution classical
%
%46A13,
% proj and ind limits of tvs
%
%46E25.\\[1mm]
% rings and algebras of functions
%
%
{\em Key words}: Lie group, smooth function,
compact support, test function, superdecaying function, analytic function,
direct limit, inductive limit, continuity, bilinear map, convolution,
smooth vector, analytic vector, representation, Fr\'{e}chet space, projective limit, inverse limit,
countable neighbourhood property,
topological algebra, topological module,
Yamasaki's Theorem, compact regularity, sequential compactness.}\\[6mm]
\noindent
{\bf\large Introduction and statement of results}\\[3mm]
We study continuity properties of
algebra actions associated with representations of a (finite-dimensional, real) Lie group~$G$.
Throughout this note, $E$ denotes a topological $G$-module,
i.e., a complex locally convex space endowed
with a continuous left $G$-action $\pi\colon G\times E\to E$
by linear maps $\pi(g,.)$.\\[3mm]
{\bf Results concerning {\boldmath$C^\infty_c(G)$} and the space of smooth vectors}\\[3mm]
Our first results concern the convolution algebra $C^\infty_c(G)$ of complex-valued
test functions on a Lie group~$G$.
As usual, $v\in E$ is called a \emph{smooth vector}
if the orbit map
$\pi_v\colon G\to E$, $\pi_v(g):=\pi(g,v)$ is smooth.
The space $E^\infty$ is\linebreak\vspace{-2mm}\pagebreak

\noindent
endowed with the initial
topology $\cO_{E^\infty}$ with respect to the map
\begin{equation}\label{defPhi}
\Phi\colon E^\infty\to C^\infty(G,E), \quad \Phi(v)=\pi_v\,.
\end{equation}
Let $\lambda_G$ be a left Haar measure on~$G$.
If $E$ is sequentially complete or has the metric convex compactness
property (see \cite{Voi} for information on this concept),\footnote{That is, each metrizable compact subset $K\sub E$ has a relatively compact
convex hull.} then the weak integral
\begin{equation}\label{theint}
\Pi(\gamma,v):=\int_G\gamma(x)\pi(x,v)\,d\lambda_G(x)
\end{equation}
exists in~$E$ for all $v\in E$ and $\gamma\in C^\infty_c(G)$
(see \cite[1.2.3]{Her} and \cite[3.27]{Rud}).
In this way, $E$ becomes a $C^\infty_c(G)$-module.
Moreover, $\Pi(\gamma,v)\in E^\infty$ for all
$\gamma\in C^\infty_c(G)$ and $v\in E$, whence
$E^\infty$ is a $C^\infty_c(G)$-submodule (as we recall in Lemma~\ref{csuppE}).
It is natural to ask whether the module multiplication
\begin{eqnarray}
C^\infty_c(G)\times E \;\;\, & \to &  \,E,\quad \;\;(\gamma,v)\mto\Pi(\gamma,v)\;\;\;\;\mbox{resp.}\label{disc1}\\
C^\infty_c(G)\times E^\infty &\to & E^\infty,\quad (\gamma,v)\mto \Pi(\gamma,v)\label{disc2}
\end{eqnarray}
is continuous, i.e., if $E$ and $(E^\infty,\cO_{E^\infty})$
are topological $C^\infty_c(G)$-modules.
Contrary to a recent assertion \cite[pp.\,667--668]{Kro},
this can fail even if $E$ is Fr\'{e}chet:\\[4.5mm]
{\bf Proposition~A.}
\emph{If $G$ is a non-compact Lie group
and $E:=C^\infty(G)$ with $\pi\colon G\times C^\infty(G)\to C^\infty(G)$,
$\pi(g,\gamma)(x):=\gamma(g^{-1}x)$, then neither $E$ nor $E^\infty$
are topological $C^\infty_c(G)$-modules,
i.e., the maps} (\ref{disc1}) and (\ref{disc2})
\emph{are discontinuous.}\\[4.5mm]
A continuous seminorm $p$ on~$E$
is called \emph{$G$-continuous} if 
$\pi\colon G\times (E,p)\to (E,p)$ is continuous~\cite[p.\,7]{BaK}.
Varying terminology from~\cite{HaM},
we call a topological $G$-module~$E$ \emph{proto-Banach}
if the topology of~$E$ is defined by a set of $G$-continuous seminorms.\footnote{Viz.\
$E$ embeds into a projective limit of Banach $G$-modules, cf.\ \cite[Remark~2.5]{BaK}.}
If $E$ is a Fr\'{e}chet space, then $E$
is proto-Banach if and only if there is
a sequence $(p_n)_{n\in\N}$
of $G$-continuous seminorms defining the topology,
i.e., if and only if  $\Pi$ is an F-representation
as in~\cite{BaK}, \cite{Kro},~\cite{GKS}.\\[4mm]
{\bf Proposition~B.}
\emph{Let $G$ be a Lie group and $E$ a proto-Banach $G$-module
that is sequentially complete or has the metric convex compactness
property. Then
the map $\Pi\colon C^\infty_c(G)\times E\to E^\infty$ from}~(\ref{theint}) \emph{is continuous.
In particular, $E$~and~$E^\infty$ are topological $C^\infty_c(G)$-modules.}\\[4mm]
We mention that $C^\infty_c(G)$ is a topological
algebra if and only if $G$ is $\sigma$-compact~\cite[p.\,3]{PJM}
(cf.\ \cite[Proposition~2.3]{Hir} for the special case $G=\R^n$).\\[4mm]
{\bf Results concerning {\boldmath$\cA(G)$} and the space of analytic vectors}\\[2.7mm]
Let $G$ be a connected Lie group now.
If $E$ is a topological $G$-module,
say that $v\in E$ is an \emph{analytic vector}
if the orbit map $\pi_v\colon G\to E$ is real analytic
(in the sense recalled in Section~\ref{preC}).
Write $E^\omega\sub E$ for the space of all analytic vectors.
If $G\sub G_\C$ (which we assume henceforth for simplicity of the presentation),
let $(V_n)_{n\in \N}$ be a basis
of relatively compact, symmetric, connected
identity neighbourhoods
in~$G_\C$, such that $V_n\supseteq \wb{V_{n+1}}$
(e.g., we can choose $V_n$
as in \cite{GKS}).
Then $v\in E$ is an analytic vector
if and only if $\pi_v$ admits a complex analytic
extension $\wt{\pi}_v\colon GV_n\to E$ for some $n\in \N$
(see Lemma~\ref{extGC}).
We write $E_n\sub E^\omega$
for the space of all $v\in E^\omega$
such that $\pi_v$ admits a $\C$-analytic extension to $GV_n$,
and give $E_n$ the topology making
\[
\Psi_n\colon E_n\to \cO(GV_n,E),\quad v\mto \wt{\pi}_v
\]
a topological embedding,
using the compact-open topology on
the space $\cO(GV_n,E)$ of all $E$-valued $\C$-analytic maps on~$GV_n$.
Like~\cite{GKS},
we give $E^\omega$ the topology making it
the direct limit $E^\omega=\dl\, E_n$\vspace{-.3mm} as a locally convex space.\footnote{If $G$ is an arbitrary connected Lie group,
let $q\colon \wt{G}\to G$ be the universal covering group
and $V_n\sub (\wt{G})_\C$ as above. Then $\wt{G}\sub (\wt{G})_\C$.
Define $E_n$ now as the space of all
$v\in E^\omega$ such that $\pi_v\circ q$
has a complex analytic extension to $\wt{G}V_n\sub (\wt{G})_\C$,
and topologize $E^\omega$ as before. In this way, we could easily drop the condition that $G\sub G_\C$.}\\[2.3mm]
We fix a left invariant
Riemannian metric ${\bf g}$ on~$G$, let ${\bf d} \colon G\times G\to [0,\infty[$
be the associated left invariant distance function,
and set
\begin{equation}\label{defnd}
d(g):={\bf d}(g,1)\quad\mbox{for $\,g\in G$.}
\end{equation}
Following \cite{GKS} and~\cite{Kro},
we let $\cR(G)$ be the Fr\'{e}chet space
of continuous functions $\gamma\colon G\to\C$
which are \emph{superdecreasing}
in the sense that
\begin{equation}\label{reusno}
\|\gamma\|_N:=\sup\{|\gamma(x)|e^{Nd(x)}\colon x\in G\}<\infty\quad\mbox{for all $\,N\in \N_0$.}
\end{equation}
Then $\cR(G)$ is a topological algebra under convolution
\cite[Proposition~4.1\,(ii)]{GKS}.
If $E$ is a sequentially complete proto-Banach $G$-module,
then
\[
\Pi(\gamma,v):=\int_G\gamma(x)\pi(x,v)\,d\lambda_G(x)\quad\mbox{for $\gamma\in \cR(G)$, $v\in E$}
\]
exists in~$E$ as an absolutely convergent integral,
and $\Pi$ makes $E$ a topological $\cR(G)$-module
(like for F-representations, \cite[Proposition~4.1\,(iii)]{GKS}).\\[3mm]
As $G\times \cR(G)\to \cR(G)$, $\pi(g,\gamma)(x):=\gamma(g^{-1}x)$
is an F-representation \cite[Prop.\,4.1\,(i)]{GKS}, $\cA(G):=\cR(G)^\omega$
is the locally convex direct limit of the steps $\cA_n(G):=\cR(G)_n$.
Since $\C$-analytic extensions of orbit maps
can be multiplied pointwise in $(\cR(G),*)$, both
$\cA_n(G)$ and $\cA(G)$ are subalgebras of~$\cR(G)$.\\[4mm]
If $E$ is a sequentially complete proto-Banach $G$-module,
then
\begin{equation}
\Pi(\gamma,v)\in E^\omega\quad\mbox{for all $\gamma\in \cA(G)$, $v\in E$; moreover,}
\end{equation}
\begin{equation}\label{mpxx}
\cA_n(G)\times E\to E_n, \;\;
(\gamma,v)\mto \Pi(\gamma,v)\quad
\mbox{is continuous for each $\,n\in \N$.}
\end{equation}
This can be shown
as in the case of F-representations in \cite[Proposition~4.6]{GKS}.\\[3mm]
{\bf Problem.}
The following assertions concerning F-representations
and the algebras $\cA(G)$ (stated in \cite[Propositions~4.2\,(ii) and 4.6]{GKS})
seem to be open in general (in view of difficulties
explained presently, in Remark~2):
\begin{itemize}
\item[(a)]
Is $\Pi\colon \cA(G)\times E\to E^\omega$
continuous for each F-representation $(E,\pi)$
(or even for each sequentially complete proto-Banach $G$-module)?\footnote{By Lemma~\ref{reach},
$\Pi\colon \cA(G)\times E\to E^\omega$ is always separately continuous, hypocontinuous in its second argument,
and sequentially continuous (hence also the maps in (b) and (c)).}
\item[(b)]
Is $\Pi\colon \cA(G)\times E^\omega\to E^\omega$
continuous in the situation of (a)?\footnote{(b) follows from (a)
as the inclusion $E^\omega\to E$ is continuous linear.}
\item[(c)] Is convolution $\cA(G)\times \cA(G)\to\cA(G)$ continuous?
\end{itemize}
To formulate a solution to these problems in special cases,
recall that a pre-order
on the set $P(E)$ of all continuous seminorms $p$
on a locally convex space~$E$ is obtained by declaring
$p\preceq q$ if $p\leq Cq$ pointwise for some $C>0$.
The space $E$ is said to have the \emph{countable neighbourhood property}
if every countable set $M\sub P(E)$ has an upper bound in
$(P(E),\preceq)$ (see \cite{Bon}, \cite{Flo}
and the references therein).\\[4mm]
{\bf Proposition~C.}
\emph{Let $G$ be a connected Lie group with $G\sub G_\C$
and $E$ be a sequentially complete, proto-Banach $G$-module.
If $E$ is normable or $E$ has the countable neighbourhood property,
then $\Pi\colon \cA(G)\times E\to E^\omega$
is continuous.
In particular, $E^\omega$ is a topological $\cA(G)$-module.}\\[3mm]
{\bf Remark 1.}
Recall that a metrizable locally convex space has the countable neighbourhood property (c.n.p.)
if and only if it is normable.
Because the c.n.p.\ is inherited by
countable locally convex direct limits~\cite{Flo},
it follows that every LB-space
(i.e., every countable locally convex direct limit of Banach spaces)
has the c.n.p.
Also locally convex spaces~$E$
which are $k_\omega$-spaces have the c.n.p.\
(see \cite[Corollary~8.1]{UBC}
and \cite[Example~9.4]{JFA}; cf.\ \cite{Bon}).\footnote{A topological space $X$ is $k_\omega$
if $X\!=\!\dl\,K_n$\vspace{-.3mm} with compact spaces $K_1\sub K_2\sub\cdots$\,\cite{FSM},\cite{GHK}.}
For example, the dual~$E'$ of any metrizable topological vector
space~$E$ is a $k_\omega$-space, when equipped with the compact-open topology
(cf.\ \cite[Corollary~4.7]{AUS}).\\[4mm]
For $G$ a compact, connected Lie group,
the convolution
$\cA(G)\times \cA(G)\to \cA(G)$
is continuous and thus $\cA(G)$
is a topological algebra.
In fact, $\cR(G)=C(G)$ is normable in this case (as each $\|.\|_N$
is equivalent to $\|.\|_\infty$ then). Since $\cA(G)=\cR(G)^\omega$,
Proposition~C applies.\\[4mm]
The same conclusion can be obtained
by an alternative argument,
which shows also that $(\cA(G),*)$
is a topological algebra for each abelian connected
Lie group~$G$.
In contrast to the setting of Proposition~C,
quite general spaces~$E$ are allowed now,
but conditions are imposed on~$G$.
Recall that a real Lie algebra $\cg$ is said to be \emph{compact}
if it admits an inner product making $e^{\ad(x)}$
an isometry for each $x\in \cg$
(where $\ad(x):=[x,.]$ as usual).
If $G$ is compact or abelian, then its Lie algebra $L(G)$
is compact.\\[4mm]
{\bf Proposition~D.}
\emph{Let $G$ be a connected Lie group with $G\sub G_\C$,
whose Lie algebra $L(G)$ is compact.
Then $E^\omega$ is a topological $\cA(G)$-module,
for each sequentially complete, proto-Banach $G$-module~$E$.
In particular, convolution is jointly continuous and
thus $(\cA(G),*)$ is a topological algebra.}\\[3mm]
{\bf Remark 2.}
Note that, due to the continuity of the maps (\ref{mpxx}), the map
\[
\Pi\colon \cA(G)\times E\to E^\omega
\]
is continuous with respect to the topology $\cO_{DL}$
on $\cA(G)\times E$ which makes it the direct limit $\dl\,(\cA_n(G)\times E)$\vspace{-.3mm}
as a topological space.
On the other hand, there is the topology $\cO_{lcx}$
making $\cA(G)\times E$ the direct
limit $\dl\,(\cA_n(G)\times E)$\vspace{-.3mm}
as a locally convex space.
Since locally convex direct limits and two-fold products commute \cite[Theorem~3.4]{Hir},
$\cO_{lcx}$ coincides with the product topology on $(\dl\,\cA_n(G))\times E=\cA(G)\times E$
and hence is the topology we are interested in.
Unfortunately, as $\Pi$ is not linear,
it is \emph{not} possible to deduce continuity of $\Pi$ on $(\cA(G)\times E,\cO_{lcx})$
from the continuity of the maps~(\ref{mpxx}).\footnote{This problem
was overlooked in \cite[proof of Proposition~4.6]{GKS}.}$'$\footnote{Note that, in Proposition~A,
the convolution $C^\infty_c(\R)\times C^\infty(\R)\to C^\infty(\R)$ is discontinuous, although its restriction to
$C^\infty_{[-n,n]}(\R)\times C^\infty(\R)\to C^\infty(\R)$ is continuous for all~$n\in \N$.}\\[2.4mm]
Of course, whenever $\cO_{DL}=\cO_{lcx}$,
we obtain continuity of $\Pi\colon \cA(G)\times E\to E^\omega$
with respect to~$\cO_{lcx}$.
Now $\cO_{lcx}\sub \cO_{DL}$ always,
but equality $\cO_{lcx}=\cO_{DL}$ only occurs in exceptional
situations.
% e.g.\ if $G$ is compact and $E$ a $k_\omega$-space. CHECK
In the prime case of an F-representation $(E,\pi)$ of~$G$,
we have $\cO_{DL}\not={\cO}_{lcx}$ in all cases of interest, as we shall presently see.
Thus, Problems\,(a)--(c)
remain open in general (apart from the special cases settled in Propositions~C and~D).\\[4mm]
The following observation pinpoints the source of these difficulties.\\[3mm]
{\bf Proposition E.}
\emph{If
$E$ is an infinite-dimensional
Fr\'{e}chet space
and $G$ a connected Lie group with $G\sub G_\C$
and $G\not=\{1\}$,
then $\cO_{DL}\not =\cO_{lcx}$
on $\cA(G)\times E$.}\\[3mm]
Proposition~E will be deduced from a new result on direct limits of topological\linebreak
groups (Proposition~\ref{yamprop}), which is a variant of Yamasaki's Theorem\linebreak
\cite[Theorem~4]{Yam}
for direct sequences which need not be strict, but
are\linebreak
sequentially compact regular.\\[2.4mm]
We mention that $\cA(G)$ also
is an algebra under \emph{pointwise} multiplication (instead of convolution),
and in fact a topological algebra (see Section~\ref{pointmult}).
%In Section~\ref{secconv},
%we explain differences which prevent an analogous treatment
%of~$(\cA(G),*)$.
Sections~\ref{secprel}--\ref{secpropB}
are devoted to Propositions~A and~B;
Sections~\ref{preC}--\ref{secpropE}
are devoted to the proofs of Propositions~C, D and~E
(and the respective preliminaries). The proofs of some auxiliary
results have been relegated to the appendix.
See also \cite{Ne2}, \cite{Ne3}
for recent studies of smooth and analytic vectors
(with a view towards infinite-dimensional groups).\\[4mm]
{\bf Basic notations.}
We write $\N_0:=\{0,1,2,\ldots\}$.
If $E$ is a vector space and $q$ a seminorm on~$E$,
we set
$B^q_\ve(x):=\{y\in E\colon q(y-x)<\ve\}$
for $x\in E$, $\ve>0$.\linebreak
$L(G)$ is the Lie algebra of a Lie group $G$
and $\im(f)$ the image of a map $f$.
If $X$ is a set and $f\colon X\to\C$ a map,
as usual $\|f\|_\infty:=\sup\{|f(x)|\colon x\in X\}$.
If $q$ is a seminorm on a vector space $E$ and
$f\colon X\to E$, we write $\|f\|_{q,\infty}:=\sup\{q(f(x))\colon x\in X\}$.
\section{Preliminaries for Propositions~A and~B}\label{secprel}
We shall use concepts and basic tools from
calculus in locally convex spaces.
\begin{numba}
Let $E$, $F$ be real locally convex spaces,
$U\sub E$ open, $r\in \N_0\cup\{\infty\}$.
Call $f\colon U\to F$ a $C^r$-map if~$f$ is continuous,
the iterated directional derivatives
\[
d^{(k)}f(x,y_1,\ldots, y_k):=(D_{y_k}\cdots D_{y_1}f)(x)
\]
exist in $E$ for all $k\in \N_0$ such that $k\leq r$,
$x\in U$ and $y_1,\ldots, y_k\in E$, and each
$d^{(k)}f\colon U\times E^k \to F$ is continuous.
The $C^\infty$-maps are also called \emph{smooth}.
\end{numba}
See~\cite{RES},
\cite{GaN}, \cite{Ham}, \cite{Mic}, \cite{Mil}. Since
compositions of $C^r$-maps are $C^r$, one can
define $C^r$-manifolds modelled on locally convex spaces
as expected.
\begin{numba}
Given a Hausdorff space~$M$ and locally convex space~$E$,
we endow the space $C^0(M,E)$ of continuous $E$-valued functions on~$M$
with the compact-open topology.
If $M$ is a $C^r$-manifold modelled on a locally convex space~$X$,
we give $C^r(M,E)$ the compact-open $C^r$-topology, i.e.,
the initial topology with respect to the maps $C^r(M,E)\to C^0(V\times X^k,E)$, $\gamma\mto d^{(j)}(\gamma\circ \phi^{-1})$
for all charts $\phi\colon U\to V$ of~$M$.
If $M$ is finite-dimensional and $K\sub M$ compact,
as usual we endow $C^r_K(M,E):=\{\gamma\in C^r(M,E)\colon \gamma|_{M\setminus K}=0\}$
with the topology induced by $C^r(M,E)$,
and give $C^r_c(M,E)=\bigcup_K\, C^r_K(M,E)$\vspace{-.3mm} the locally convex direct
limit topology. We abbreviate $C^r(M):=C^r(M,\C)$, etc.
\end{numba}
The following variant is essential for our purposes.
\begin{numba}\label{RS}
Let $E_1$, $E_2$ and $F$ be real locally convex spaces,
$U\sub E_1\times E_2$ be open and $r,s\in \N_0\cup\{\infty\}$.
A map $f\colon U\to F$ is called a \emph{$C^{r,s}$-map}
if the derivatives
\[
d^{(k,\ell)}f(x,y,a_1,\ldots, a_k,b_1,\ldots,b_\ell):=
(D_{(a_k,0)}\cdots D_{(a_1,0)}D_{(0,b_\ell)}\cdots D_{(0,b_1)}f)(x,y)
\]
exist for all $k,\ell\in \N_0$ such that $k\leq r$ and $\ell\leq s$,
$(x,y)\in U$ and $a_1,\ldots, a_k\in E_1$, $b_1,\ldots, b_\ell\in E_2$,
and $d^{(k,\ell)}f \colon U\times E_1^k\times E_2^\ell\to F$ is continuous.
\end{numba}
See \cite{Alz} and \cite{AaS} for a detailed
development of the theory of $C^{r,s}$-maps.
Notably, $f$ as in~(\ref{RS}) is $C^{\infty,\infty}$
if and only if it is smooth.
If $h\circ f\circ (g_1\times g_2)$ is defined,
where $h$ is $C^{r+s}$, $f$ is $C^{r,s}$,
$g_1$ is $C^r$ and $g_2$ is $C^s$, then the map
$h\circ f\circ (g_1\times g_2)$ is $C^{r,s}$.
As a consequence, we can speak of
$C^{r,s}$-maps
$f\colon M_1\times M_2\to M$
if $M,M_1,M_2$ are smooth manifolds (likewise for
$f\colon U\to M$ on an open set $U\sub M_1\times M_2$).
See op.\,cit.\ for these basic facts,
as well as the following aspect of the exponential law for $C^{r,s}$-maps,
which is essential for us.\footnote{Exponential laws for smooth
functions are basic tools of infinite-dimensional analysis;
see, e.g., \cite{ZOO} (compare \cite{KaM} for related bornological
results).}
\begin{la}\label{explaw}
Let $r,s\in \N_0\cup\{\infty\}$, $E$ be a locally convex space,
$M$ be a $C^r$-manifold
and $N$ be a $C^s$-manifold $($both
modelled on some locally convex space$)$.
If $f\colon M\times N\to E$ is a $C^{r,s}$-map, then
\[
f^\vee(x)\colon M\to C^s(N,E),\quad
f^\vee(x)(y):=f(x,y)
\]
is a $C^r$-map.
Hence, if $g\colon M\to C^s(N,E)$ is a map such that
$\wh{g}\colon M\times N\to E$, $\wh{g}(x,y):=g(x)(y)$
is $C^{r,s}$, then~$g$ is~$C^r$.\Punkt
\end{la}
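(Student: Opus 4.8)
The plan is to prove the exponential law for $C^{r,s}$-maps by reducing it to the structure already in place, namely the definition of $C^r$-maps via iterated directional derivatives and the compact-open $C^s$-topology on $C^s(N,E)$. The key observation is that a map into $C^s(N,E)$ is $C^r$ if and only if, after composing with the initial-topology-defining maps $C^s(N,E)\to C^0(V'\times X'^k, E)$ for charts of $N$, the resulting composite is $C^r$ (where $X'$ is the modelling space of $N$). So I would first reduce to the local situation: using charts $\phi\colon U\to V\sub X$ of $M$ and $\psi\colon U'\to V'\sub X'$ of $N$, it suffices to handle a $C^{r,s}$-map $f\colon V\times V'\to E$ on an open subset of a product of locally convex spaces, and to show that $f^\vee\colon V\to C^s(V',E)$ is $C^r$.

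First I would establish that $f^\vee$ is well-defined, i.e., that for each fixed $x\in V$ the map $f^\vee(x)=f(x,\cdot)\colon V'\to E$ is genuinely $C^s$; this is immediate from the definition of $C^{r,s}$, since the partial derivatives $d^{(0,\ell)}f(x,\cdot,b_1,\dots,b_\ell)$ exist and are continuous for $\ell\le s$. Next, the heart of the argument is to show $f^\vee$ is continuous and then to identify its directional derivatives. For the directional derivatives in the $M$-direction, I expect the clean statement
\[
d^{(k)}(f^\vee)(x,a_1,\dots,a_k)=\bigl(d^{(k,0)}f(x,\cdot,a_1,\dots,a_k)\bigr)^\vee
\]
as an element of $C^s(V',E)$, proved by induction on $k\le r$. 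The inductive step requires showing that the difference quotient $\tfrac{1}{t}\bigl(f^\vee(x+ta)-f^\vee(x)\bigr)$ converges in $C^s(V',E)$ to $\bigl(d^{(1,0)}f(x,\cdot,a)\bigr)^\vee$, and that the limit map is again $C^s$ with its own partial derivatives given by mixed derivatives of $f$. Convergence in the compact-open $C^s$-topology means uniform convergence of all iterated $V'$-directional derivatives on compact subsets of $V'\times X'^\ell$, and here I would use the fundamental theorem of calculus (mean value estimate) to control the difference quotient together with its $y$-derivatives simultaneously, exploiting the continuity of the full mixed derivative $d^{(k,\ell)}f$ on the relevant compact sets.

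The main obstacle I anticipate is precisely this interchange-of-limits step: showing that the difference quotients converge in the $C^s$-topology rather than merely pointwise, and that the iterated $V'$-derivatives of the limit agree with the mixed derivatives $d^{(k,\ell)}f$. This is where joint continuity of $d^{(k,\ell)}f$ on the product $U\times E_1^k\times E_2^\ell$ does the real work, furnishing the uniform control on compacta needed to pass the limit under the $y$-derivatives and under the supremum. Once this is settled, continuity of $d^{(k)}(f^\vee)\colon V\times X^k\to C^s(V',E)$ follows because its hat-transform $V\times X^k\times V'\times \cdots\to E$ is built from the continuous maps $d^{(k,\ell)}f$, and reapplying the reduction via the initial topology closes the induction up to $k\le r$. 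The second (``hence'') assertion is then a formal consequence: if $\widehat g$ is $C^{r,s}$ then $(\widehat g)^\vee = g$ is $C^r$ by the first part, since $\widehat g(x,y)=g(x)(y)$ means $(\widehat g)^\vee(x)=g(x)$ as elements of $C^s(N,E)$, so no separate argument is needed.
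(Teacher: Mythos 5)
The paper does not prove this lemma at all: it is stated with a box and explicitly quoted from the theory of $C^{r,s}$-maps developed in \cite{Alz} and \cite{AaS} (``See op.\,cit.\ for these basic facts, as well as the following aspect of the exponential law\ldots''). So there is no in-paper argument to compare against; what can be said is that your outline reconstructs essentially the standard proof from those references: localize in charts, identify $d^{(k)}(f^\vee)(x,a_1,\ldots,a_k)$ with the map $y\mapsto d^{(k,0)}f(x,y,a_1,\ldots,a_k)$ by induction on $k$, prove convergence of the difference quotients in the compact-open $C^s$-topology via the fundamental theorem of calculus and joint continuity of the mixed differentials on compact sets, and note that the ``hence'' clause is purely formal. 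That is the right strategy.

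Two points in your write-up deserve sharpening. First, your ``key observation'' --- that $C^r$-ness of a map into $C^s(N,E)$ can be tested after composing with the initial-topology maps into the spaces $C^0(V'\times X'^{k},E)$ --- is itself a lemma, not an observation: an initial topology lets you test \emph{continuity} along the defining maps, but for $r\geq 1$ the ``if'' direction requires knowing that the limits of difference quotients, which a priori exist only in each $C^0$-space, assemble into an element of $C^s(N,E)$. This holds because $C^s(N,E)$ embeds as a \emph{closed} vector subspace of the product of those $C^0$-spaces, whereupon a result like \cite[Lemma~10.1]{BGN} applies; alternatively --- and closer to your own plan --- it drops out of your FTC step: writing
\[
d^{(\ell)}\Bigl(\tfrac{1}{t}\bigl(f^\vee(x+ta)-f^\vee(x)\bigr)\Bigr)(y,b_1,\ldots,b_\ell)
=\int_0^1 d^{(1,\ell)}f(x+\sigma t a,\,y,\,a,\,b_1,\ldots,b_\ell)\,d\sigma
\]
shows that all $y$-derivatives of the difference quotients converge locally uniformly to $d^{(1,\ell)}f(x,\cdot,a,\cdot)$, and the standard lemma on locally uniform convergence of maps together with their derivatives then certifies both that the limit lies in $C^s(N,E)$ and the interchange $d^{(0,\ell)}\bigl(d^{(1,0)}f(x,\cdot,a)\bigr)=d^{(1,\ell)}f(x,\cdot,a,\cdot)$ --- exactly the obstacle you single out (note that the definition in the paper takes the $N$-derivatives first, so this interchange is genuinely needed and is delivered, not assumed, by this argument). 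Second, to close the induction you need the standard but not-free fact that $\bigl((x,a),y\bigr)\mapsto d^{(1,0)}f(x,y,a)$ is itself a $C^{r-1,s}$-map, so that the inductive hypothesis can be applied to it. With these two ingredients supplied, your outline is a correct proof in the spirit of the cited sources.
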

In particular, we shall encounter $C^{\infty,0}$-maps of the following form.
\begin{la}\label{specsit}
Let $E_1$, $E_2$, $E_3$ and $F$ be locally convex spaces, $U_1\sub E_1$ and $U_2\sub E_2$
be open,
$g\colon U_1\times U_2\to\C$ be a smooth map,
$h\colon U_1\to E_3$ be a smooth map
and $\pi\colon U_2\times E_3\to F$
be a continuous map such that $\pi(y,.)\colon E_3\to F$ is linear
for each $y\in U_2$.
Then the following map is $C^{\infty,0}$:
\[
f\colon U_1\times U_2\to F,\quad
f(x,y):=g(x,y)\pi(y,h(x)).
\]
\end{la}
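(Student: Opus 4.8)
The plan is to verify directly that $f$ is $C^{\infty,0}$ by computing its iterated directional derivatives in the $E_1$-direction and checking their continuity, the point being that for $s=0$ one never differentiates in the $E_2$-variable~$y$, so mere continuity of~$\pi$ in~$y$ suffices. First I would note that $f$ is continuous: the map $(x,y)\mto(y,h(x))$ is continuous into $U_2\times E_3$ (as $h$ is continuous), its composite with the jointly continuous~$\pi$ gives a continuous $F$-valued map $(x,y)\mto\pi(y,h(x))$, and multiplying by the continuous scalar $g(x,y)$ via the continuous bilinear scalar multiplication $\C\times F\to F$ yields continuity of $f=d^{(0,0)}f$.

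Next I would compute the first $E_1$-derivative. Fixing $y\in U_2$, the map $\pi(y,\cdot)\colon E_3\to F$ is continuous linear (by joint continuity of $\pi$ and linearity in the second slot), hence smooth, so $x\mto\pi(y,h(x))$ is smooth by the chain rule, with the continuous linear map passing through the derivative:
\[
d^{(1)}\bigl(\pi(y,h(\cdot))\bigr)(x,a)=\pi\bigl(y,d^{(1)}h(x,a)\bigr).
\]
Since $x\mto g(x,y)$ is smooth, the Leibniz rule for the bilinear multiplication gives
\[
d^{(1,0)}f(x,y,a)=d^{(1)}_1g(x,y,a)\,\pi(y,h(x))+g(x,y)\,\pi\bigl(y,d^{(1)}h(x,a)\bigr),
\]
where $d^{(1)}_1g$ denotes the partial derivative of $g$ in its first argument.

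The crucial structural observation, to be established by induction on~$k$, is that each $d^{(k,0)}f(x,y,a_1,\dots,a_k)$ is a finite sum of terms of the form
\[
G(x,y,a_{i_1},\dots,a_{i_p})\;\pi\bigl(y,H(x,a_{j_1},\dots,a_{j_q})\bigr),
\]
where $G$ is a partial derivative of $g$ in the first variable (a continuous scalar-valued function on $U_1\times U_2\times E_1^p$), $H$ is a partial derivative of $h$ (a continuous $E_3$-valued function on $U_1\times E_1^q$), and $p+q=k$. In the inductive step one differentiates each summand once more in a direction $(a_{k+1},0)$: the extra derivative either lands on~$G$, producing again a first-variable partial of~$g$, or is pushed through the continuous linear map $\pi(y,\cdot)$ onto~$H$, producing $\pi(y,dH(\cdot,a_{k+1}))$. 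As differentiation is in the $x$-direction only, the variable~$y$ is untouched and the displayed product shape is preserved; this simultaneously establishes that all $d^{(k,0)}f$ exist.

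Finally I would check continuity. In each summand, $G$ is a partial derivative of the smooth map~$g$, hence continuous in $(x,y,a_1,\dots,a_k)$; the factor $\pi(y,H(x,\dots))$ is continuous because $H$ is a partial derivative of the smooth map~$h$ (continuous into $E_3$) and $\pi\colon U_2\times E_3\to F$ is jointly continuous. Continuity of scalar multiplication then makes each product continuous on $U\times E_1^k$, and a finite sum of continuous maps is continuous; hence every $d^{(k,0)}f$ is continuous and $f$ is $C^{\infty,0}$. The only genuine subtlety is the interchange of differentiation with~$\pi$ in the inductive step, i.e.\ $d\bigl(\pi(y,h(\cdot))\bigr)=\pi(y,dh(\cdot))$; this rests on $\pi(y,\cdot)$ being \emph{continuous} linear, and the reason the argument stays within the calculus of continuous maps, rather than requiring $\pi$ to be differentiable in~$y$, is precisely that $s=0$.
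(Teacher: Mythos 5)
Your proof is correct and takes essentially the same route as the paper's: both argue by induction on the order of the $x$-derivatives, using the Leibniz rule and the fact that differentiation in the first variable passes through the continuous linear map $\pi(y,\cdot)$, so that $d^{(k,0)}f$ keeps the shape (first-variable partial of $g$) times $\pi(y,\text{derivative of } h)$, whose continuity follows from the joint continuity of~$\pi$. The paper merely packages the induction more compactly, observing that $d^{(1,0)}f$ is itself a sum of two maps of exactly the form hypothesized in the lemma (with $U_1$ replaced by $U_1\times E_1$), so the induction hypothesis applies verbatim instead of tracking explicit sums of products.
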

For the proof of Lemma~\ref{specsit}
(and those of the next four lemmas), the reader is referred to
Appendix~A.
\begin{la}\label{lemB1}
For each Lie group $G$, the left translation action
\[
\pi\colon
G\times C^\infty_c(G)\to C^\infty_c(G),\quad \pi(g,\gamma)(x):=\gamma(g^{-1}x)
\]
is a smooth map.
\end{la}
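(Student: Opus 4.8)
The plan is to reduce the claim to a scalar statement via the exponential law (Lemma~\ref{explaw}), handling the non-metrizable direct-limit space $C^\infty_c(G)$ by localizing to functions with support in a fixed compact set. The two structural features I would exploit are that $\pi(g,\cdot)$ is \emph{linear} for each fixed $g$, and that $\Supp(\pi(g,\gamma))\sub g\,\Supp(\gamma)$, since $\pi(g,\gamma)(x)=\gamma(g^{-1}x)$ vanishes unless $g^{-1}x\in\Supp(\gamma)$.

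First, using linearity of $\pi$ in its second argument, I would reduce smoothness to the statement that $\pi$ is a $C^{\infty,0}$-map. Indeed, for a map that is continuous and linear in the second variable, the second-variable partial derivatives are given by the map itself ($D_{(0,\eta)}\pi(g,\gamma)=\pi(g,\eta)$, with all higher ones vanishing), so that being $C^{\infty,0}$ upgrades automatically to $C^{\infty,\infty}$, which is the same as being smooth. The gain is that the only iterated derivatives left to control are the $d^{(k,0)}\pi$, which differentiate solely in the $G$-direction and remain \emph{linear} in the $C^\infty_c(G)$-argument; in particular each is a map on $(G\times C^\infty_c(G))\times L(G)^k$ involving only a \emph{single} $C^\infty_c(G)$-factor.

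Next I would establish the per-step version. Fix an open, relatively compact $\Omega\sub G$ with $L:=\wb{\Omega}$ and a compact $K\sub G$. By the support estimate, $\pi$ restricts to $\Omega\times C^\infty_K(G)\to C^\infty_{LK}(G)$. Since $C^\infty_{LK}(G)$ is a closed vector subspace of the Fr\'echet space $C^\infty(G)=C^\infty(G,\C)$ carrying the induced topology, it suffices to show that $(g,\gamma)\mto[x\mto\gamma(g^{-1}x)]$ is smooth as a map $\Omega\times C^\infty_K(G)\to C^\infty(G)$ and then corestrict; composing with the continuous linear embedding $C^\infty_{LK}(G)\emb C^\infty_c(G)$ then yields smoothness of $\pi|_{\Omega\times C^\infty_K(G)}$ into $C^\infty_c(G)$. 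By the exponential law (Lemma~\ref{explaw}, with $M:=\Omega\times C^\infty_K(G)$, $N:=G$, $E:=\C$, $r=s=\infty$), the displayed map is smooth once the associated map
\[
\Theta\colon(\Omega\times C^\infty_K(G))\times G\to\C,\quad((g,\gamma),x)\mto\gamma(g^{-1}x)
\]
is shown to be $C^{\infty,\infty}$, i.e.\ smooth. But $\Theta$ factors as $((g,\gamma),x)\mto(\gamma,g^{-1}x)\mto\ev(\gamma,g^{-1}x)$, the composite of a smooth map (using smoothness of inversion and multiplication of~$G$) with the evaluation $\ev\colon C^\infty_K(G)\times G\to\C$; the latter is smooth, being the restriction to the closed subspace $C^\infty_K(G)$ of the smooth evaluation on $C^\infty(G)\times G$. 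Hence $\Theta$ is smooth and the per-step claim follows.

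Finally I would globalize, which I expect to be the main obstacle, as it touches exactly the direct-limit phenomenon warned about in Remark~2. Here the reduction to $C^{\infty,0}$ pays off: existence of the derivatives $d^{(k,0)}\pi$ is a local, one-step matter (any base point $(g_0,\gamma_0)$ together with any finitely many $G$-directions lies in a single step $\Omega\times C^\infty_K(G)$, since $\gamma_0$ has compact support), and their values land in $C^\infty_c(G)$ by the previous paragraph. For continuity of $d^{(k,0)}\pi$ on $(G\times C^\infty_c(G))\times L(G)^k$ I would use that $G$ is \emph{locally compact} (being finite-dimensional): the product of the locally compact, finite-dimensional factor $G\times L(G)^k$ with the direct limit $C^\infty_c(G)=\dl_K C^\infty_K(G)$ carries the direct-limit topology, so that continuity may be tested on the steps $(\Omega\times C^\infty_K(G))\times L(G)^k$, where it is already known. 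It is precisely the presence of only one direct-limit factor — a consequence of the linearity of $\pi$ in $\gamma$ — that prevents the failure of joint continuity flagged in Remark~2. Assembling these facts shows $\pi$ is $C^{\infty,0}$, hence smooth.
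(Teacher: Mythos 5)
Your reduction to the $C^{\infty,0}$-property via linearity in $\gamma$ is legitimate (it is one of the basic facts about $C^{r,s}$-maps from \cite{AaS}), and your per-step argument is correct; it is essentially the same combination of the exponential law (Lemma~\ref{explaw}) with smoothness of evaluation that the paper itself uses. The gap is the globalization step, and it is fatal as written. The fact you appeal to --- that multiplying by a locally compact space preserves direct limits --- is a statement about colimits in the category of \emph{topological spaces}: for locally compact $X$ one has $X\times \dl\,Y_n=\dl\,(X\times Y_n)$, where $\dl\,Y_n$ carries the direct limit topology $\cO_{DL}$. But in Lemma~\ref{lemB1} the space $C^\infty_c(G)$ carries the \emph{locally convex} direct limit topology $\cO_{lcx}$, and these differ: for a non-compact $\sigma$-compact Lie group (e.g.\ $G=\R$) the steps $C^\infty_K(G)$ form a strict sequence of infinite-dimensional Fr\'{e}chet spaces, each closed and non-open in the next and without relatively compact $0$-neighbourhoods, so Yamasaki's theorem \cite[Theorem~4]{Yam} (cf.\ Proposition~\ref{yamprop} and Corollary~\ref{yamcor}) shows that $\cO_{DL}$ is not even a group topology, whereas $\cO_{lcx}$ is; hence $\cO_{lcx}\subsetneq\cO_{DL}$. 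Your principle therefore fails already for $X$ a one-point space, where it asserts $\cO_{lcx}=\cO_{DL}$, and adding a locally compact factor does not repair it: if the two topologies on $C^\infty_c(G)$ differ, so do the corresponding product topologies on $X\times C^\infty_c(G)$. Consequently, testing continuity of $\pi$ (or of $d^{(k,0)}\pi$) on the steps $\Omega\times C^\infty_K(G)$ yields only continuity with respect to the \emph{finer} topology obtained by putting $\cO_{DL}$ on the factor $C^\infty_c(G)$ --- a strictly weaker conclusion --- and not continuity on $G\times (C^\infty_c(G),\cO_{lcx})$ with the product topology, which is what the $C^{\infty,0}$-property demands, already for $k=0$, i.e.\ for joint continuity of $\pi$ itself. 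This is exactly the pitfall the paper isolates in Remark~2: since the map is not (jointly) linear, continuity on the steps does not transfer to $\cO_{lcx}$; linearity of $\pi(g,\cdot)$ in the second variable buys separate continuity and hypocontinuity (cf.\ Lemma~\ref{hypoetc}), but not joint continuity.

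The paper closes precisely this gap by a different device, and that is where the real work of the proof lies: fixing a relatively compact open $U\ni g_0$ and a locally finite cover $(U_j)_{j\in J}$ of $G$ by relatively compact open sets, it embeds $(C^\infty_c(G),\cO_{lcx})$ as a closed topological vector subspace of the locally convex direct \emph{sum} $\bigoplus_{j\in J}C^\infty(U_j)$ via restriction maps (Lemma~\ref{embdisum}), runs your per-step computation on each component to obtain smooth maps $\Xi_j\colon U\times C^\infty(U^{-1}U_j)\to C^\infty(U_j)$, and then invokes a genuine theorem on parameter-dependent componentwise maps between direct sums with finite-dimensional parameter space (Lemma~\ref{sumpardep}) to conclude that the assembled map $\Xi$, and hence $\Psi\circ\pi_U=\Xi\circ(\id_U\times\Theta)$, is smooth. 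Unless you prove a substitute for Lemma~\ref{sumpardep} --- some theorem of the form ``finite-dimensional parameter, linear in the LF-variable, smooth on each step, implies smooth'' --- your outline does not go through; the colimit-commutation principle you cite cannot serve that purpose.
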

We mention that $G$ is not assumed $\sigma$-compact in Lemma~\ref{lemB1}
(of course, $\sigma$-compact groups are the case of primary interest).
\begin{la}\label{lemB2}
Let $X$ be a locally compact space, $E$ be a locally convex space
and $f\in C^0(X,E)$.
Then the multiplication operator
$m_f\colon C^0_c(X)\to C^0_c(X,E)$, $m_f(\gamma)(x):=\gamma(x)f(x)$
is continuous linear.
\end{la}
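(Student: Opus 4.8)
The plan is to reduce continuity of $m_f$ to a bounded-operator estimate on each step of the direct limit defining $C^0_c(X)$. Linearity is immediate from the identity $m_f(\alpha\gamma_1+\beta\gamma_2)(x)=(\alpha\gamma_1(x)+\beta\gamma_2(x))f(x)$, and for $\gamma\in C^0_c(X)$ the function $m_f(\gamma)$ is continuous (being the product of the scalar map $\gamma$ and the $E$-valued map $f$ via the continuous scalar action $\C\times E\to E$) and compactly supported, so $m_f$ indeed takes values in $C^0_c(X,E)$. Recall that $C^0_c(X)=\bigcup_K C^0_K(X)$ carries the locally convex direct limit topology; hence, by the universal property of the direct limit, $m_f$ is continuous if and only if its restriction to each step $C^0_K(X)$ (for $K\sub X$ compact) is continuous. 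So I would fix such a~$K$. Since $\gamma|_{X\setminus K}=0$ forces $m_f(\gamma)|_{X\setminus K}=0$, the operator maps $C^0_K(X)$ into $C^0_K(X,E)$; and as the inclusion $C^0_K(X,E)\to C^0_c(X,E)$ is continuous (again by the direct limit topology on the target), it suffices to show that $m_f\colon C^0_K(X)\to C^0(X,E)$ is continuous for the compact-open topologies.

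Next I would identify the relevant topologies explicitly. The topology induced on $C^0_K(X)$ by the compact-open topology of $C^0(X)$ is generated by the seminorms $\gamma\mto\sup_{x\in L}|\gamma(x)|$ for $L\sub X$ compact; each such seminorm is dominated by $\|\gamma\|_\infty=\sup_{x\in K}|\gamma(x)|$, which is itself among them (take $L=K$), so this induced topology is exactly the norm topology of $\|.\|_\infty$ and $C^0_K(X)$ is normable. The compact-open topology on $C^0(X,E)$ is generated by the seminorms $h\mto\sup_{x\in L}q(h(x))$ for $L\sub X$ compact and $q$ a continuous seminorm on~$E$. Continuity thus amounts to a single estimate of each such seminorm by a multiple of $\|.\|_\infty$.

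For the estimate, fix a continuous seminorm $q$ on~$E$. Then $x\mto q(f(x))$ is continuous, hence bounded on the compact set~$K$; set $M_q:=\sup_{x\in K}q(f(x))<\infty$. For $\gamma\in C^0_K(X)$ and any $x\in X$ one has $q(m_f(\gamma)(x))=|\gamma(x)|\,q(f(x))$, which vanishes for $x\notin K$ and is at most $M_q\|\gamma\|_\infty$ for $x\in K$; therefore $\|m_f(\gamma)\|_{q,\infty}\le M_q\,\|\gamma\|_\infty$. As this holds for every $q$, the restriction $m_f|_{C^0_K(X)}$ is continuous, and the reduction above then yields continuity of $m_f$ on all of $C^0_c(X)$. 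I expect no genuine obstacle here; the only point that must be handled with care is the passage through the direct limit structure, which is precisely what lets one replace a verification on the (non-normable) space $C^0_c(X)$ by a uniform, indeed bounded-operator, estimate on the individual normed steps $C^0_K(X)$.
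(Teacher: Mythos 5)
Your proof is correct and takes essentially the same route as the paper: the paper likewise uses the universal property of the locally convex direct limit $C^0_c(X)=\dl\,C^0_K(X)$ to reduce continuity of the linear map $m_f$ to continuity of its restrictions $C^0_K(X)\to C^0_K(X,E)$, and then cites \cite[Lemma~3.9]{QUO} for that step. Your explicit bound $\|m_f(\gamma)\|_{q,\infty}\leq M_q\,\|\gamma\|_\infty$ with $M_q:=\sup_{x\in K}q(f(x))$ simply inlines the content of the cited lemma, making the argument self-contained.
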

We also need a lemma on the parameter dependence of weak integrals.
Note that the definition of $C^{r,0}$-maps does not use the vector space structure
on~$E_2$, and makes perfect sense if~$E_2$
is merely a topological space.
\begin{la}\label{pardep}
Let $X$, $E$ be locally convex spaces, $P\sub X$ open, $r\in \N_0\cup\{\infty\}$,
$K$ be a compact topological space, $\mu$ a finite measure
on the $\sigma$-algebra of Borel sets of~$K$
and $f\colon P\times K\to E$ be a $C^{r,0}$-map.
Assume that the weak integral
$g(p):=\int_Kf(p,x)\,d\mu(x)$
exists in~$E$, as well as the weak integrals
\begin{equation}\label{candder}
\int_Kd^{(k,0)}f(p,x,q_1,\ldots,q_k)\,d\mu(x)\,,
\end{equation}
for all $k\in \N$ such that $k\leq r$, $p\in P$ and $q_1,\ldots, q_k\in X$.
Then $g\colon P\to E$ is a $C^r$-map, with $d^{(k)}g(p,q_1,\ldots, q_k)$
given by~{\rm(\ref{candder})}.
\end{la}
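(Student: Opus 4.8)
The plan is to deduce the lemma from the exponential law (Lemma~\ref{explaw}) combined with continuity of the weak-integration operator on a suitable function space. Set $F:=C^0(K,E)$, endowed with the compact-open topology; since $K$ is compact, this is the topology of uniform convergence, defined by the seminorms $h\mto \|h\|_{p,\infty}=\sup_{x\in K}p(h(x))$ as $p$ ranges over the continuous seminorms on~$E$. Because $f$ is $C^{r,0}$ and the $C^{r,0}$-property only requires~$K$ to be a topological space (as noted before the lemma), the exponential law yields that the adjoint map
\[
f^\vee\colon P\to F,\qquad f^\vee(p):=f(p,\sbull)
\]
is $C^r$, its iterated directional derivatives being given by the partial derivatives of~$f$ in the first argument: for $k\le r$, the element $d^{(k)}(f^\vee)(p,q_1,\ldots,q_k)\in F$ is the function $x\mto d^{(k,0)}f(p,x,q_1,\ldots,q_k)$, which lies in $C^0(K,E)$ by the continuity of $d^{(k,0)}f$.

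Next I would introduce the linear subspace $F_0\sub F$ of all $h\in C^0(K,E)$ whose weak integral $\int_K h\,d\mu$ exists in~$E$, equipped with the topology induced by~$F$. The hypotheses say precisely that $f^\vee(p)\in F_0$ for each~$p$, and that each derivative $d^{(k)}(f^\vee)(p,q_1,\ldots,q_k)$ lies in~$F_0$. Since the difference quotients defining these derivatives lie in the linear subspace~$F_0$ and converge in~$F$ to limits that again lie in~$F_0$, the corestriction $f^\vee\colon P\to F_0$ is a $C^r$-map into the subspace~$F_0$, with the same directional derivatives.

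It then remains to verify that the weak-integration operator $I\colon F_0\to E$, $I(h):=\int_K h\,d\mu$, is continuous and linear; composing the $C^r$-map $f^\vee\colon P\to F_0$ with the (smooth) continuous linear map~$I$ shows that $g=I\circ f^\vee$ is $C^r$, and $d^{(k)}g(p,q_1,\ldots,q_k)=I\bigl(d^{(k)}(f^\vee)(p,q_1,\ldots,q_k)\bigr)$ is exactly the weak integral~(\ref{candder}). Linearity is clear. For continuity I would fix a continuous seminorm~$p$ on~$E$ and use the defining property of the weak integral, $\lambda(I(h))=\int_K\lambda(h(x))\,d\mu(x)$ for every $\lambda\in E'$; writing $p(y)=\sup\{|\lambda(y)|\colon \lambda\in U_p^\circ\}$ with $U_p^\circ$ the polar of $U_p=\{y\colon p(y)\le 1\}$, I obtain
\[
p(I(h))=\sup_{\lambda\in U_p^\circ}\Bigl|\int_K\lambda(h(x))\,d\mu(x)\Bigr|\le \mu(K)\,\sup_{x\in K}p(h(x))=\mu(K)\,\|h\|_{p,\infty},
\]
so that $I\colon F_0\to E$ is continuous.

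The main obstacle I anticipate is organizational: separating the \emph{existence} of the integrals from the \emph{differentiation}, since $I$ is only defined on~$F_0$ rather than on all of~$F$, so the argument must route through the corestriction of~$f^\vee$ to~$F_0$ instead of composing with a globally defined operator. Two technical points need care. First, one must ensure the $s=0$ exponential law is available when the second factor~$K$ is merely a compact topological space rather than a manifold, together with the stated formula for the derivatives in the first variable; for this I would appeal to \cite{Alz} and \cite{AaS}. Second, one must justify that a $C^r$-map into~$F$ all of whose iterated directional derivatives take values in the subspace~$F_0$ is already $C^r$ as a map into~$F_0$ with the induced topology; this holds because a net lying in~$F_0$ that converges in~$F$ to a limit in~$F_0$ converges in the subspace~$F_0$, so the difference-quotient limits computed in~$F$ are also the limits in~$F_0$. \Punkt
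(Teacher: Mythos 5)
Your proof is correct, but it takes a genuinely different route from the paper's. The paper stays in finite dimensions: after reducing to $r\in\N_0$, it obtains continuity of $g$ from \cite[Lemma~A.2]{PJM}, computes each iterated directional derivative by restricting $g$ to a finite-dimensional affine slice $(t_1,\ldots,t_k)\mto g(p+\sum_{j}t_jq_j)$ and differentiating under the integral sign there via \cite[Lemma~A.3]{PJM}, and then gets continuity of $d^{(k)}g$ by applying the case $r=0$ to $d^{(k,0)}f$. You instead argue structurally: adjoint map into $C^0(K,E)$, corestriction to the subspace $F_0$ of weakly integrable functions, and composition with the integration operator~$I$. Your two supporting claims are both sound: the corestriction of a $C^r$-map to a (not necessarily closed) linear subspace carrying the induced topology is again $C^r$ provided all values and all iterated directional derivatives lie in that subspace, and your polar estimate $p(I(h))\leq \mu(K)\,\|h\|_{p,\infty}$ for continuous seminorms $p$ on $E$ is a correct Hahn--Banach argument. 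The one point where your argument borrows nontrivial analytic content is the exponential law itself: the paper's Lemma~\ref{explaw} assumes the second factor is a $C^s$-manifold, so it does not literally apply to a compact topological space~$K$, as you rightly flag. The statement you need (including the formula $d^{(k)}(f^\vee)=(d^{(k,0)}f)^\vee$) is true without any completeness assumption on~$E$ --- one proves it by induction, writing difference quotients as $\int_0^1 d^{(1,0)}f(p+stq,x,q)\,ds$ via the fundamental theorem of calculus and using compactness of~$K$ to get uniform convergence --- but it must be proved or located precisely in \cite{AaS}/\cite{Alz}; it is exactly the analysis that the paper delegates to \cite{PJM} instead. In exchange, your route yields the formula for $d^{(k)}g$ and its continuity automatically from the chain rule, packaged in a reusable mapping-space lemma, whereas the paper's route needs no mapping-space theory beyond what finite-dimensional parameter integrals provide.
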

\begin{la}\label{csuppE}
Let $G$ be a Lie group and $\pi\colon G\times E\to E$
be a topological $G$-module which is sequentially complete
or has the metric convex compactness property.
Then $w:=\Pi(\gamma,v)\in E^\infty$
for all $\gamma\in C^\infty_c(G)$ and $v\in E$.
In particular, $E$ and $E^\infty$ are $C^\infty_c(G)$-modules.
\end{la}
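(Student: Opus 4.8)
The plan is to prove that the orbit map $\pi_w\colon G\to E$ of $w:=\Pi(\gamma,v)$ is smooth. Granting this, the concluding assertion is immediate: the module structure on~$E$ is already in place (as recalled in the introduction), and the inclusion $\Pi(C^\infty_c(G)\times E)\sub E^\infty$ just established exhibits $E^\infty$ as a $C^\infty_c(G)$-submodule.

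The first step is to rewrite $\pi_w$ so that the group parameter enters the integrand differentiably. Since $\pi(g,.)$ is continuous and linear, it may be pulled inside the weak integral, and the action law $\pi(g,\pi(x,v))=\pi(gx,v)$ gives $\pi_w(g)=\int_G\gamma(x)\,\pi(gx,v)\,d\lambda_G(x)$. Substituting $x\mapsto g^{-1}x$ and using left invariance of $\lambda_G$ yields the decisive formula
\[
\pi_w(g)=\int_G \gamma(g^{-1}x)\,\pi(x,v)\,d\lambda_G(x).
\]
Here the entire $g$-dependence is carried by the smooth factor $\gamma(g^{-1}x)$, while $\pi(x,v)$ is independent of~$g$ and merely continuous in~$x$ (the vector $v$ need not be smooth); this is what makes $C^{\infty,0}$, rather than full smoothness, the natural regularity of the integrand.

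Next I would check the regularity of the integrand $f(g,x):=\gamma(g^{-1}x)\,\pi(x,v)$. The map $(g,x)\mapsto\gamma(g^{-1}x)$ is smooth (the group operations and $\gamma$ being smooth; cf.\ Lemma~\ref{lemB1}), and $\pi\colon G\times E\to E$ is continuous and linear in its second argument; hence Lemma~\ref{specsit}, applied in charts via the composition rule for $C^{r,s}$-maps and with $h$ the constant map $g\mapsto v$, shows that $f$ is $C^{\infty,0}$. As smoothness is local, I would fix $g_0\in G$ together with a relatively compact chart neighbourhood $P$ of $g_0$. For every $g\in P$ the integrand vanishes off the compact set $K:=\wb P\cdot\Supp(\gamma)$, so $\pi_w|_P$ is unchanged if $\int_G$ is replaced by the integral over~$K$ against the finite measure obtained by restricting~$\lambda_G$ to~$K$.

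Finally I would invoke Lemma~\ref{pardep} for $f|_{P\times K}$. Its hypotheses demand that the weak integrals of $f$ and of each derivative $d^{(k,0)}f(g,\cdot,q_1,\dots,q_k)$ exist in~$E$; this is exactly where sequential completeness (or the metric convex compactness property) is used, as each of these integrands is a continuous, $K$-supported, $E$-valued function on the compact metrizable set~$K$ and therefore possesses a weak integral by \cite[1.2.3]{Her} and \cite[3.27]{Rud}. Lemma~\ref{pardep} then yields that $\pi_w|_P$ is $C^\infty$, and since $g_0$ was arbitrary, $\pi_w$ is smooth, so $w\in E^\infty$. I expect the main obstacle to be the bookkeeping of this last localization: one must secure a single compact domain and finite measure valid uniformly for $g$ in the neighbourhood~$P$ and, simultaneously, the existence of the weak integrals of all the derivatives $d^{(k,0)}f$, which is precisely the point at which the completeness hypothesis is indispensable.
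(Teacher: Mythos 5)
Your smoothness argument is essentially the paper's own: the same rewriting $\pi_w(g)=\int_G\gamma(g^{-1}y)\,\pi(y,v)\,d\lambda_G(y)$ (pulling the continuous linear map $\pi(g,.)$ through the weak integral, then using the action law and left invariance of Haar measure), the same localization over a relatively compact open neighbourhood of $g_0$ with the domain of integration cut down to the compact set $\wb{U}\Supp(\gamma)$, the same identification of the integrand as a $C^{\infty,0}$-map via Lemma~\ref{specsit}, and the same appeal to Lemma~\ref{pardep}. Your explicit verification of the weak-integral hypotheses of Lemma~\ref{pardep} (existence of the integrals of $f$ and of each $d^{(k,0)}f$ via \cite[1.2.3]{Her} and \cite[3.27]{Rud}, using that the integrands are continuous on a compact metrizable set) is, if anything, slightly more careful than the paper, which invokes Lemma~\ref{pardep} without comment.

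There is, however, a genuine (if small) gap in your treatment of the final clause. That $E$ is a $C^\infty_c(G)$-module is not ``already in place'': it means precisely that $\Pi(\gamma*\eta,v)=\Pi(\gamma,\Pi(\eta,v))$ for all $\gamma,\eta\in C^\infty_c(G)$ and $v\in E$, and the introduction's assertion of this is backed by nothing other than Lemma~\ref{csuppE} itself (``\dots as we recall in Lemma~\ref{csuppE}''), so deferring back to the introduction is circular. The paper closes exactly this point at the end of its proof: it tests $\Pi(\gamma*\eta,v)$ against continuous linear functionals, applies Fubini's theorem, and then uses left invariance of Haar measure to obtain $\Pi(\gamma*\eta,v)=\Pi(\gamma,\Pi(\eta,v))$. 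You need to supply this computation (or an equivalent one); once it is in place, your observation that $\Pi(C^\infty_c(G)\times E)\sub E^\infty$ does indeed exhibit $E^\infty$ as a $C^\infty_c(G)$-submodule.
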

\section{Proof of Proposition~A}\label{secpropA}
The evaluation map $\ve\colon C^\infty(G)\times G\to\C$, $(\gamma,x)\mto\gamma(x)$
is smooth (see, e.g., \cite{GaN} or \cite[Proposition~11.1]{ZOO}).
In view of Lemma~\ref{explaw}, the mapping\linebreak
$\pi\colon G\times C^\infty(G)\to C^\infty(G)$, $\pi(g,\gamma)(x)=\gamma(g^{-1}x)$
is smooth, because
\[
\wh{\pi}\colon G\times C^\infty(G)\times G\to \C,\quad
(g,\gamma,x)=\gamma(g^{-1}x)=\ve(\gamma,g^{-1}x)
\]
is smooth.
Hence each $\gamma\in C^\infty(G)$ is a smooth vector.
Using Lemma~\ref{explaw} again,
we see that the linear map
\[
\Phi\colon C^\infty(G)\to C^\infty(G,C^\infty(G)),\quad \Phi(\gamma)=\pi_\gamma
\]
is smooth (and hence continuous) because
$\wh{\Phi}\colon C^\infty(G)\times G\to C^\infty(G)$,
$\wh{\Phi}(\gamma,g)=\pi_\gamma(g)=\pi(g,\gamma)$
is smooth. As a consequence, $C^\infty(G)$ and the space $C^\infty(G)^\infty$
of smooth vectors coincide as locally convex spaces.\\[2.7mm]
Now $\Pi(\gamma,\eta)=\gamma*\eta$
for $\gamma\in C^\infty_c(G)$ and $\eta\in C^\infty(G)$,
In fact, for each $x\in G$, the point evaluation $\ve_x\colon C^\infty(G)\to\C$, $\theta\mto\theta(x)$
is continuous linear. Hence
\[
\Pi(\gamma,\eta)(x)\!=\!\Big(\!\int_G\!\gamma(y)\eta(y^{-1}\cdot)\,d\lambda_G(y)\Big)(x)\!=\!\!
\int_G\!\gamma(y)\eta(y^{-1}x)\,d\lambda_G(y)\!=\!(\gamma*\eta)(x).
\]
Thus $\Pi$ is the map
$C^\infty_c(G)\times C^\infty(G)\to C^\infty(G)$, $(\gamma,\eta)\mto\gamma*\eta$,
which is discontinuous by~\cite[Proposition~7.1]{PJM}
(or the independent work \cite{Lar},
in the special case $G=\R^n$).
\section{Proof of Proposition~B}\label{secpropB}
\begin{la}\label{hypoetc}
In the situation of Lemma~{\rm\ref{csuppE}},
the bilinear mapping\linebreak
$\Pi\colon C^\infty_c(G)\times E\to E^\infty$
is separately continuous, hypocontinuous in its\linebreak
second argument
and sequentially continuous.
If $E$ is barrelled $($e.g., if $E$ is a Fr\'{e}chet space$)$,
then $\Pi$ is hypocontinuous in both arguments.
\end{la}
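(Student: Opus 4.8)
The plan is to use that $E^\infty$ carries the initial topology with respect to $\Phi\colon E^\infty\to C^\infty(G,E)$, $v\mapsto\pi_v$ (see~(\ref{defPhi})); thus continuity of any map into $E^\infty$ is equivalent to continuity of its composite with $\Phi$ into $C^\infty(G,E)$, with the compact-open $C^\infty$-topology. Since $\pi(g,.)$ is continuous and linear, it passes inside the weak integral, and left invariance of $\lambda_G$ yields the key identity
\[
\Phi(\Pi(\gamma,v))(g)=\pi_{\Pi(\gamma,v)}(g)=\pi(g,\Pi(\gamma,v))=\int_G\gamma(g^{-1}x)\pi(x,v)\,d\lambda_G(x).
\]
So I must study continuity of $(\gamma,v)\mapsto\big[g\mapsto\int_G\gamma(g^{-1}x)\pi(x,v)\,d\lambda_G(x)\big]$ into $C^\infty(G,E)$. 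The whole problem reduces to \emph{separate} continuity; the remaining assertions will then follow from soft functional analysis.

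For separate continuity I would treat the two variables by different routes. Fixing $v$, I restrict to a step $C^\infty_K(G)$ (a Fr\'echet space), since a linear map out of the LF-space $C^\infty_c(G)$ is continuous iff it is continuous on each step. On $C^\infty_K(G)\times G$ the integrand $(\gamma,g,x)\mapsto\gamma(g^{-1}x)\pi(x,v)$ is $C^{\infty,0}$ in $\big((\gamma,g),x\big)$ by Lemma~\ref{specsit} (with $h\equiv v$ constant, using smoothness of the evaluation $C^\infty(G)\times G\to\C$ and of $(\gamma,g,x)\mapsto(\gamma,g^{-1}x)$); as $\Supp\gamma\sub K$ is fixed and $g$ ranges over a compact chart neighbourhood $Q$, the $x$-support lies in the fixed compact set $Q\,\Supp(\gamma)$, so Lemma~\ref{pardep} permits differentiation under the integral and shows $(\gamma,g)\mapsto\int_G\gamma(g^{-1}x)\pi(x,v)\,d\lambda_G(x)$ is smooth on $C^\infty_K(G)\times G$. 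The exponential law (Lemma~\ref{explaw}) then transposes this to smoothness, hence continuity, of $\gamma\mapsto\Phi(\Pi(\gamma,v))$ on each step, so $\Pi(.,v)$ is continuous into $E^\infty$. Fixing $\gamma$ instead, the vector $v$ enters the action \emph{linearly}, so Lemma~\ref{specsit} does not apply directly; here I argue by direct seminorm estimates. Differentiating the (smooth) orbit map under the integral via Lemma~\ref{pardep} expresses each $d^{(\ell)}_g\pi_{\Pi(\gamma,v)}(g;Y)$ as $\int_G(d^{(\ell)}_g\gamma(g^{-1}x))(Y)\,\pi(x,v)\,d\lambda_G(x)$; bounding $q(\pi(x,v))\le p(v)$ uniformly for $x\in Q\,\Supp(\gamma)$ (equicontinuity of $\{\pi(x,.)\}$ over a compact set) and taking the supremum over $g\in Q$ and $\|Y_i\|\le 1$ bounds each generating seminorm of $C^\infty(G,E)$ by a constant times $p(v)$, whence the linear map $\Pi(\gamma,.)$ is continuous into $E^\infty$.

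With separate continuity in hand, the rest is soft. The factor $C^\infty_c(G)$ is an LF-space, hence barrelled; for each bounded $B\sub E$ the family $\{\Pi(.,v):v\in B\}$ of continuous linear maps $C^\infty_c(G)\to E^\infty$ is pointwise bounded (as $\Pi(\gamma,.)$ is continuous linear, it maps $B$ to a bounded set), so by Banach--Steinhaus it is equicontinuous; this is exactly hypocontinuity in the second argument. If moreover $E$ is barrelled, the symmetric argument over the bounded subsets of $C^\infty_c(G)$ yields hypocontinuity in the first argument as well, hence in both. Finally, sequential continuity follows from separate continuity together with hypocontinuity in the second argument: for $\gamma_n\to\gamma$ and $v_n\to v$ write $\Pi(\gamma_n,v_n)-\Pi(\gamma,v)=\Pi(\gamma_n-\gamma,v_n)+\Pi(\gamma,v_n-v)$; the second term tends to $0$ by continuity of $\Pi(\gamma,.)$, while the first tends to $0$ because $\{v_n\}$ is bounded and $\gamma_n-\gamma\to 0$, so equicontinuity of $\{\Pi(.,v_n):n\in\N\}$ drives $\Pi(\gamma_n-\gamma,v_n)$ into any prescribed neighbourhood of $0$ eventually.

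The main obstacle is the separate-continuity step into $E^\infty$: because the topology of $E^\infty$ records the entire smooth orbit map, not merely the value $\Pi(\gamma,v)\in E$, one cannot avoid the $C^{r,s}$-calculus and the differentiation-under-the-integral arguments (Lemmas~\ref{explaw}, \ref{specsit}, \ref{pardep}), the technical nuisance being that the support of $x\mapsto\gamma(g^{-1}x)$ moves with $g$ and must be confined to a fixed compact set locally in $g$. By contrast, \emph{joint} continuity is not to be expected, and indeed can fail (cf.\ Proposition~A); this is precisely why the conclusion is limited to separate continuity, hypocontinuity, and sequential continuity.
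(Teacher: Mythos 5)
Your proof is correct, and it shares the paper's skeleton---everything is reduced to separate continuity, after which barrelledness does the rest---but both halves of the separate-continuity argument are implemented differently. For continuity in the test-function variable, you restrict the linear map $\Pi(.,v)$ to the steps $C^\infty_K(G)$, run the $C^{r,s}$-machinery (Lemmas~\ref{specsit}, \ref{pardep}, \ref{explaw}) on $C^\infty_K(G)\times G$, and then invoke the universal property of the locally convex direct limit; the paper instead treats $C^\infty_c(G)$ globally, factoring $\Phi\circ\Pi(.,v)=C^\infty(G,I)\circ\psi$ through $C^0_c(G,E)$, which requires the smoothness of the translation action on $C^\infty_c(G)$ (Lemma~\ref{lemB1}) and the multiplication operators of Lemma~\ref{lemB2}. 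Your route is more symmetric between the two variables and bypasses the nontrivial Lemma~\ref{lemB1} entirely; the paper's route avoids step-wise reduction and yields smoothness, not merely continuity, of the maps involved. For continuity in the vector variable, the paper again uses Lemmas~\ref{explaw}, \ref{specsit} and \ref{pardep} (smoothness of $\wh{h}\colon E\times G\to E$), whereas you substitute direct seminorm estimates resting on equicontinuity of $\{\pi(x,.)\colon x\in C\}$ for compact $C\sub G$; that equicontinuity is indeed a valid consequence of joint continuity, linearity in the second variable and compactness, so your estimates go through. One side remark of yours is wrong, though harmlessly so: Lemma~\ref{specsit} \emph{does} apply directly to this half---take $U_1=E\times G$, $h$ the projection $(v,g)\mapsto v$, and the scalar factor $((v,g),y)\mapsto\gamma(g^{-1}y)$; this is exactly the paper's argument---so your detour through estimates is a matter of taste, not necessity. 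Two final points of hygiene: for non-$\sigma$-compact $G$ the space $C^\infty_c(G)$ is not literally an LF-space (the system of steps is uncountable), but it is still a locally convex direct limit of Fr\'{e}chet spaces, so barrelledness and the step-criterion for continuity of linear maps persist unchanged; and in your step argument the fixed compact set containing the $x$-support should be $QK$, with $K$ the defining compact set of the step, since $\Supp(\gamma)$ varies with $\gamma$.
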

\begin{proof}
We need only show that $\Pi$ is separately continuous.
In fact, $C^\infty_c(G)$ is barrelled,
being a locally convex direct limit of Fr\'{e}chet spaces \cite[II.7.1 and II.7.2]{SaW}.
Hence, if $\Pi$ is separately continuous, it
automatically is hypocontinuous in its second argument
(see \cite[II.5.2]{SaW}) and hence sequentially continuous
(see \cite[p.\,157, Remark following \S40, 1., (5)]{Koe}).\\[2.4mm]
Fix $\gamma\in C^\infty_c(G)$ and let~$K$ be its support.
Let $\Phi\colon E^\infty\to C^\infty(G,E)$ be as in~(\ref{defPhi}).
The map $\Pi(\gamma,.)$ will be continuous
if we can show that $h:=\Phi\circ \Pi(\gamma,.):$\linebreak
$E\to C^\infty(G,E)$ is continuous.
By Lemma~\ref{explaw}, this will hold if $\wh{h}\colon E\times G\to E$,
\begin{equation}\label{stasta}
(v,g)\mto \pi(g)\!\!\int_G\gamma(x)\pi(x,v)\,d\lambda_G(x)\!=\!
\!\int_G\gamma(g^{-1}y)\pi(y,v)\,d\lambda_G(y)
\end{equation}
is $C^{0,\infty}$. It suffices to show that $\wh{h}$ is $C^\infty$.
Given $g_0\in G$, let $U\sub G$ be a relatively compact, open
neighbourhood of $g_0$. 
We show that $\wh{h}$ is smooth on $E\times U$.
For $g\in U$,
the domain of integration can be replaced by the compact set $\wb{U}K\sub G$,
without changing the second integral in~(\ref{stasta}).
By Lemma~\ref{specsit},
\[
(E\times G)\times G \to E\,,\quad ((v,g),y)\mto \gamma(g^{-1}y)\pi(y,v)
\]
is~$C^{\infty,0}$.
Its restriction to $(E\times U)\times \wb{U}K$ therefore
satisfies the hypotheses of Lemma~\ref{pardep}, and hence
the parameter-dependent integral $\wh{h}|_{E\times U}$
is smooth.\\[2.4mm]
Next, fix $v\in E$.
For $\gamma\in C^\infty_c(G)$, define
$\psi(\gamma)\colon G\to C^0_c(G,E)$
via $\psi(\gamma)(g)(y)$\linebreak
$:=\gamma(g^{-1}y)\pi(y,v)$.
We claim:
\begin{itemize}
\item[(a)]
\emph{$\psi(\gamma)\in C^\infty(G,C^0_c(G,E))$ for each $\gamma\in C^\infty_c(G)$; and}
\item[(b)]
\emph{The linear map
$\psi\colon C^\infty_c(G)\to C^\infty(G, C^0_c(G,E))$
is continuous.}
\end{itemize}
Note that the integration operator
$I\colon C^0_c(G,E)\to E$, $\eta\mto \int_G\eta(y)\,d\lambda_G(y)$
is continuous linear,\footnote{In fact, the restriction of $I$ to $C^0_K(G,E)$
is continuous for each compact set $K\sub G$,
because $q(I(\gamma))\leq \|\gamma\|_{q,\infty}\lambda_G(K)$
for each continuous seminorm~$q$ on~$E$ and $\gamma\in C^0_K(G,E)$.}
entailing that also
\[
C^\infty(G,I)\colon C^\infty(G,C^0_c(G,E))\to C^\infty(G,E),\quad
f\mto I\circ f
\]
is continuous linear (see \cite{GaN} or \cite[Lemma~4.13]{ZOO}).
If the claim holds, then the formula $\Phi\circ \Pi(.,v)=C^\infty(G,I)\circ \psi$
shows that $\Phi\circ \Pi(.,v)$ is continuous, and thus $\Pi(.,v)$
is continuous.
Hence, it only remains to establish the claim.\\[2.4mm]
To prove (a), fix $\gamma\in C^\infty_c(G)$
and let $K$ be its support.
It suffices to show that, for each $g_0\in G$
and relatively compact, open neighbourhood~$U$ of $g_0$ in~$G$,
the restriction
$\psi(\gamma)|_U$ is smooth.
As the latter has its image in $C^0_{\wb{U}K}(G,E)$,
which is a closed vector subspace of both $C^0_c(G,E)$ and $C^0(G,E)$
with the same induced topology,
it suffices to show that $h:=\psi(\gamma)|_U$ is smooth as
a map to $C^0(G,E)$ \cite[Lemma~10.1]{BGN}.
But this is the case (by Lemma~\ref{explaw}), as
\[
\wh{h}\colon U\times G\to E,\quad (g,y)\mto \gamma(g^{-1}y)\pi(y,v)
\]
is $C^{\infty,0}$ (by Lemma~\ref{specsit}).
By Lemma~\ref{explaw},
to prove~(b) we need to check that
\[
\wh{\psi}\colon C^\infty_c(G)\times G\to C^0_c(G,E),\quad \wh{\psi}(\gamma,g)(y)= \gamma(g^{-1}y)\pi(y,v)
\]
is $C^{0,\infty}$. We show that $\wh{\psi}$ is $C^\infty$.
By Lemma~\ref{lemB2}, the map
\[
\theta \colon C^\infty_c(G)\to C^0_c(G,E),\quad \theta (\gamma)(y):=\gamma(y)\pi(y,v)
\]
is continuous linear.
The map $\tau\colon G\times C^\infty_c(G)\to C^\infty_c(G)$, $\tau(g,\gamma)(x)=\gamma(g^{-1}x)$
is smooth, by Lemma~\ref{lemB1}.
Since $\wh{\psi}(\gamma,g)=\theta(\tau(g,\gamma))$, also $\wh{\psi}$
is smooth.
\end{proof}
{\bf Proof of Proposition~B.}
As the inclusion map $E^\infty\to E$ is continuous, the final assertions follow
once we have continuity of $\Pi\colon C^\infty_c(G)\times E\to E^\infty$.\\[2.7mm]
We first assume that~$E$ is a Banach space.
By Lemma~\ref{hypoetc}, $\Pi$ is hypocontinuous
in the second argument.
As the unit ball $B\sub E$ is bounded,
it follows that $\Pi|_{C^\infty_c(G)\times B}$ is continuous
(Proposition~4 in [10, Chapter III, \S5, no. 3]).
Since~$B$ is a $0$-neighbourhood,
we conclude that~$\Pi$ is continuous.\\[2.4mm]
If $E$ is a proto-Banach $G$-module,
then the topology on $E$ is initial with respect to
a family $f_j\colon E\to E_j$ of continuous linear
$G$-equivariant maps to certain Banach $G$-modules $(E_j,\pi_j)$.
As a consequence, the topology on $C^\infty(G,E)$
is initial with respect to the mappings
\[
h_j:=C^\infty(G,f_j)\colon C^\infty(G,E)\to C^\infty(G,E_j),\quad
\gamma\mto f_j\circ \gamma
\]
(see \cite{GaN}).
Therefore, the topology on $E^\infty$
is initial with respect to the maps $h_j\circ \Phi$
(with $\Phi$ as in (\ref{defPhi})).
Now consider $\Phi_j\colon E_j^\infty\to C^\infty(G,E_j)$, $w\mto (\pi_j)_w$.
Since $f_j\circ \pi_v=(\pi_j)_{f_j(v)}$,
we have $f_j(E^\infty)\sub (E_j)^\infty$.
Moreover, the topology on~$E^\infty$
is initial with respect to the maps
$h_j\circ \Phi=\Phi_j\circ f_j$.
By the Banach case already discussed,
$\Pi_j\colon C^\infty_c(G)\times E_j\to (E_j)^\infty$, $\Pi_j(\gamma,w):=\int_G\gamma(y)\pi_j(y,w)\,d\lambda_G(y)$
is continuous for each $j\in J$. Since
$\Phi_j\circ f_j\circ \Pi=\Phi_j\circ \Pi_j\circ (\id_{C^\infty_c(G)}\times f_j)$
is continuous for each~$j$, so is $\Pi$.\,\Punkt
\section{Preliminaries for Propositions C, D and E}\label{preC}
If $E$ is a vector space and $(U_j)_{j\in J}$ a family of subsets $U_j\sub E$, we abbreviate
\[
\sum_{j\in J}U_j:=\bigcup_F\,\sum_{j\in F}U_j,
\]
for $F$ ranging through the set of finite subsets of~$J$.
\begin{numba}
If $E$ and $F$ are complex locally convex spaces,
then a function $f\colon U\to F$ on an open set $U\sub E$
is called \emph{complex analytic} (or $\C$-analytic)
if $f$ is continuous and each $x\in U$ has a neighbourhood
$Y\sub U$ such that
\begin{equation}\label{expa}
(\forall y\in Y)\;\;f(y)=\sum_{n=0}^\infty p_n(y-x)
\end{equation}
pointwise,
for some continuous homogeneous complex polynomials $p_n\colon \!E\!\to\! F$
of degree~$n$ (see \cite{BaS}, \cite{RES}, \cite{GaN}, \cite{Mil}
for further information).
\end{numba}
\begin{numba}\label{defnra}
If $E$ and $F$ are real locally convex spaces,
following \cite{Mil}, \cite{RES} and \cite{GaN}
we call a function $f\colon U\to F$ on an open set $U\sub E$
\emph{real analytic} (or $\R$-analytic)
if it extends to a $\C$-analytic function
$\wt{U}\to F_\C$ on an open set $\wt{U}\sub E_\C$.
\end{numba}
\begin{numba}\label{numrem}
Both concepts are chosen in such a way that compositions
of $\K$-analytic maps are $\K$-analytic (for $\K\in\{\R,\C\}$).
They therefore give rise to notions of $\K$-analytic manifolds modelled
on locally convex spaces and $\K$-analytic mappings between them.
If $E$ is finite-dimensional (or Fr\'{e}chet) and $F$ is sequentially complete
(or Mackey-complete),\footnote{In the sense
that each Mackey-Cauchy sequence in~$F$ converges (see \cite{KaM}).}
then a map $f\colon E\supseteq U\to F$ as in \ref{defnra}
is $\R$-analytic if and only if it is continuous and admits
local expansions (\ref{expa}) into continuous homogeneous real polynomials
(cf.\ \cite[Theorem~7.1]{BaS} and \cite[Lemma~1.1]{GN2}),
i.e., if and only if it is real analytic in the sense of~\cite{BaS}.
\end{numba}
By the next lemma (proved in Appendix~A, like all other lemmas from this section),
our notion of analytic vector coincides with that in~\cite{GKS}.
\begin{la}\label{extGC}
Let $G$ be a connected Lie group with $G\sub G_\C$,
$\pi\colon G\times E\to E$ be a topological $G$-module
and $v\in E$. Then $v\in E^\omega$
if and only if the orbit map $\pi_v$
admits a $\C$-analytic extension $GV\to E$
for some open identity neighbourhood $V\sub G_\C$.
\end{la}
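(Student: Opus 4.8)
The plan is to prove both implications by relating the intrinsic real analyticity of the orbit map (Definition~\ref{defnra}, i.e.\ a $\C$-analytic extension into the complexification $(E_\R)_\C$ of the underlying real space $E_\R$) to the existence of a holomorphic extension into $E$ itself over a tube $GV\sub G_\C$. A preliminary point is to reconcile the two target spaces. There is a canonical continuous $\C$-linear retraction $\rho\colon (E_\R)_\C\to E$ determined by $\rho(x\tensor 1)=x$ and $\rho(x\tensor i)=ix$, splitting the inclusion $E_\R\emb (E_\R)_\C$; moreover $(E_\R)_\C\isom E\oplus\wb E$ canonically, with $\wb E$ the conjugate space. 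Composing an $(E_\R)_\C$-valued holomorphic extension with $\rho$ yields an $E$-valued one; conversely, from an $E$-valued holomorphic extension $h$ one recovers an $(E_\R)_\C$-valued one by pairing $h$ with its conjugate $z\mto h(\wb z)$ (holomorphic into $\wb E$) under this splitting. Thus, \emph{locally}, a map is real analytic into $E_\R$ in the sense of~\ref{defnra} if and only if it admits a holomorphic extension into $E$; this lets me argue throughout with $E$-valued holomorphic extensions, the form appearing in the statement.

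For ``$\Leftarrow$'', assume $\wt\pi_v\colon GV\to E$ is a $\C$-analytic extension of $\pi_v$. Since $G$ is a totally real submanifold of maximal dimension in $G_\C$, around every $g_0\in G$ there is a complex chart of $G_\C$ carrying $G$ into the real points $L(G)\sub L(G)_\C=L(G_\C)$. In such a chart $\wt\pi_v$ becomes holomorphic on a neighbourhood of the real slice and extends the chart representation of $\pi_v$; by the local equivalence above, that representation is real analytic into $E_\R$. As $g_0$ is arbitrary, $\pi_v$ is real analytic, i.e.\ $v\in E^\omega$. This direction is essentially the routine fact that the restriction of a holomorphic map to a totally real submanifold is real analytic.

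The substance of the lemma is ``$\Rightarrow$''. Assume $v\in E^\omega$. Real analyticity at the identity, together with the local equivalence above, provides a connected, symmetric identity neighbourhood $V_0\sub G_\C$ and a $\C$-analytic map $\wt\pi_v^{(0)}\colon V_0\to E$ extending $\pi_v|_{V_0\cap G}$, with $\wt\pi_v^{(0)}(1)=v$. The idea is then to propagate this single local extension over the whole tube $GV_0$ by \emph{$G$-equivariance}, exploiting the cocycle identity $\pi_v(gh)=\pi(g)\pi_v(h)$. Concretely I would set $\wt\pi_v(gw):=\pi(g)\,\wt\pi_v^{(0)}(w)$ for $g\in G$, $w\in V_0$. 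For each fixed $g_0\in G$ the map $g_0V_0\to E$, $z\mto\pi(g_0)\wt\pi_v^{(0)}(g_0^{-1}z)$ is $\C$-analytic, since $\pi(g_0)$ is continuous $\C$-linear (hence $\C$-analytic) and $L_{g_0}$ is a holomorphic automorphism of $G_\C$; so, once well-definedness is known, $\wt\pi_v$ is $\C$-analytic on the open cover $\{g_0V_0\colon g_0\in G\}$ of $GV_0$, and it restores $\pi_v$ on $G$ because $\wt\pi_v(g)=\pi(g)\wt\pi_v^{(0)}(1)=\pi(g)v=\pi_v(g)$.

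The crux—and the step I expect to be the main obstacle—is \emph{consistency}: that $\pi(g)\wt\pi_v^{(0)}(w)=\wt\pi_v^{(0)}(gw)$ whenever $g\in G$ and $w,gw\in V_0$. Both sides are holomorphic in $w$ and agree on the real points (there $\pi(g)\pi_v(w)=\pi(gw)v=\pi_v(gw)$), so the claim follows from the identity theorem for $E$-valued holomorphic functions—reduced to the scalar case by composing with functionals in $E'$ and using that $E'$ separates points (Hahn--Banach)—\emph{provided} every connected component of the relevant overlap domain meets the real form $G$. Securing this connectivity condition is the delicate part: it is precisely why $V_0$ (equivalently the eventual $V$) must be chosen of a special shape, e.g.\ a tube as in~\cite{GKS} for which real points of products factor through real points, rather than an arbitrary small neighbourhood. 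Once consistency holds on such a $V_0$, well-definedness of $\wt\pi_v$ on $GV_0$ follows by writing $g_1w_1=g_2w_2$, so $(g_2^{-1}g_1)w_1=w_2$, and applying the identity; then $\wt\pi_v\colon GV_0\to E$ is the desired $\C$-analytic extension with $V:=V_0$.
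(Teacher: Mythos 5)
Your overall architecture matches the paper's proof: the easy direction is handled the same way (a $\C$-analytic map is $\R$-analytic, and composing with the $\R$-analytic inclusion $G\to G_\C$ gives $v\in E^\omega$), and for the converse the paper likewise first produces a local $\C$-analytic extension near $1$ — using the chart $\phi=\exp_{G_\C}|_W$, chosen so that $\phi(W\cap L(G))=G\cap\phi(W)$, which settles the chart-compatibility point you gloss over — and then globalizes over the tube by equivariance. The difference is that the paper delegates the entire globalization step to \cite[Lemma~3.2]{GKS}, in the form ``$v\in\wt{E}_n$ implies $v\in E_{4n}$'', whereas you attempt to carry it out directly.

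That is where your proposal has a genuine gap, at the very step you flag as the crux. You reduce everything to the consistency identity $\pi(g)\wt{\pi}_v^{(0)}(w)=\wt{\pi}_v^{(0)}(gw)$ for $g\in G$ with $w,gw\in V_0$, and you establish it only \emph{provided} every connected component of $V_0\cap g^{-1}V_0$ meets $G$; this proviso is never verified, no $V_0$ with that property is exhibited, and for arbitrary $g\in G$ there is no reason it should hold. The missing idea is that the component problem can be avoided entirely, because only elements $h=g_2^{-1}g_1=z_2z_1^{-1}\in G\cap V_0V_0^{-1}$, i.e.\ elements close to $1$, ever arise in the well-definedness check — a restriction your own reduction produces but which you do not exploit. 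Concretely: take the $V_m$ to be the metric balls $B_{1/m}$ (as in Section~\ref{secpropE}), shrink the domain of the local extension so that it contains $V_n$, and define the global extension on $GV_{4n}$ by $g z\mto \pi(g)\wt{\pi}_v^{(0)}(z)$ with $z\in V_{4n}$. If $g_1z_1=g_2z_2$ with $z_i\in V_{4n}$, then $h:=g_2^{-1}g_1=z_2z_1^{-1}$ satisfies $d_\C(h)<\tfrac{1}{2n}$ by subadditivity and inversion-invariance of $d_\C$, so $hV_{4n}\sub V_n$; hence \emph{both} maps $z\mto\pi(h)\wt{\pi}_v^{(0)}(z)$ and $z\mto\wt{\pi}_v^{(0)}(hz)$ are $\C$-analytic on the \emph{whole} ball $V_{4n}$, which is connected, and they agree on the maximal totally real submanifold $G\cap V_{4n}\ni 1$; the identity theorem (scalarized via Hahn--Banach, as you note) then gives equality on all of $V_{4n}$, with no component condition needed. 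This argument — which is precisely the content of \cite[Lemma~3.2]{GKS}, the result the paper cites at this point, and is the source of the index jump from $n$ to $4n$ — is what your proof is missing; without it, or without an explicit appeal to that lemma, the forward implication remains unproved at its central step.
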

\begin{numba}\label{collect}
The map $d\colon G\to [0,\infty[$ from (\ref{defnd}) has the following elementary properties:
\begin{equation}\label{firstd}
(\forall x,y\in G)\;\; d(xy)\leq d(x)+d(y)\quad\mbox{and}\quad d(x^{-1})=d(x)\,.
\end{equation}
It is essential for us that
\begin{equation}\label{secd}
\int_Ge^{-\ell d(g)}\,d\lambda_G(g)\;<\; \infty
\end{equation}
for some $\ell\in \N_0$,
by \cite[\S1, Lemme~2]{Gar}.
For each $G$-continuous seminorm $p$ on a topological $G$-module $\pi\colon G\times E\to E$,
there exist $C,c>0$ such that
\begin{equation}\label{good2}
(\forall g\in G)(\forall v\in E)\quad p(\pi(g,v))\leq Ce^{cd(g)}p(v),
\end{equation}
as a consequence of \cite[\S2, Lemme~1]{Gar}.
\end{numba}
\begin{numba}
Given a connected Lie group~$G$ with $G\sub G_\C$,
let $\wt{A}_n(G)$ be the space of all $\C$-analytic functions
$\eta\colon V_nG\to\C$ such that
\begin{equation}\label{requir}
\|\eta\|_{K,N}:=\sup\{|\eta(z^{-1}g)|e^{Nd(g)}\colon z\in K,g\in G\}\;<\;\infty
\end{equation}
for each $N\in \N_0$ and compact set $K\sub V_n$ (for $V_n$ as in the introduction).
Make $\wt{A}_n(G)$ a locally convex space using the norms $\|.\|_{K,N}$.
It is essential for us that the map
\[
\wt{\cA}_n(G)\to \cA_n(G),\quad \eta\mto \eta|_G
\]
is an isomorphism of topological vector spaces.
It inverse is the map $\gamma\mto \wt{\gamma}$
taking $\gamma$ to its unique $\C$-analytic extension
$\wt{\gamma}\colon V_nG\to \C$
(see \cite[Lemma~4.3]{GKS}).
Given $\gamma\in \cA_n(G)$ and $K,N$ as before,
we abbreviate $\|\gamma\|_{K,N}:=\|\wt{\gamma}\|_{K,N}$.
\end{numba}
The next two lemmas show that the space
$\wt{\cA}_n(G)$ and its topology remain unchanged if,
instead, one requires (\ref{requir}) for
all compact subsets $K\sub GV_n$.
\begin{la}\label{betterK}
If $K\sub GV_n$ is compact,
then there exists a compact set $L\sub V_n$
such that $GK\sub GL$.
\end{la}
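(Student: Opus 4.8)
The plan is to exploit the local compactness of the complex Lie group $G_\C$ together with a standard compactness argument on~$K$. Since $G$ is finite-dimensional and $G\sub G_\C$ is assumed, $G_\C$ is a finite-dimensional (complex, hence real) Lie group and in particular locally compact. The first observation is that $GV_n=\bigcup_{g\in G}gV_n$ is a union of \emph{open} sets, because $V_n$ is open and each left translation $x\mto gx$ is a homeomorphism of~$G_\C$; thus $\{gV_n:g\in G\}$ is an open cover of the compact set $K$. One should resist extracting a finite subcover directly, however, since that would only deposit the transported pieces inside $\wb{V_n}$ rather than inside the open set $V_n$, which is what the statement demands. So I would first shrink at each point.

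Concretely, for each $k\in K$ I would choose $g_k\in G$ with $g_k^{-1}k\in V_n$, and then, using local compactness of~$G_\C$ and the fact that $V_n$ is an open neighbourhood of $g_k^{-1}k$, pick an open set $W_k\sub G_\C$ with $g_k^{-1}k\in W_k$ and $\wb{W_k}$ compact, $\wb{W_k}\sub V_n$. Then $\{g_kW_k:k\in K\}$ is an open cover of~$K$, so by compactness there are finitely many $k_1,\dots,k_m\in K$ with $K\sub\bigcup_{i=1}^m g_{k_i}W_{k_i}$. I would then set $L:=\bigcup_{i=1}^m\wb{W_{k_i}}$. As a finite union of compact sets, $L$ is compact, and $L\sub V_n$ holds by construction.

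It remains to verify $GK\sub GL$. Given $k\in K$, we have $k\in g_{k_i}W_{k_i}$ for some~$i$, whence $g_{k_i}^{-1}k\in W_{k_i}\sub\wb{W_{k_i}}\sub L$ and therefore $k=g_{k_i}\bigl(g_{k_i}^{-1}k\bigr)\in g_{k_i}L\sub GL$. Thus $K\sub GL$, and since $GL$ is invariant under left multiplication by elements of~$G$ (as $G$ is a group, $G(GL)=GL$), it follows that $GK\sub G(GL)=GL$, as required.

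The only genuine subtlety is the passage from $\wb{V_n}$ to the open set~$V_n$: the crude finite subcover of~$K$ by translates $g_iV_n$ would place the transported pieces only in $\wb{V_n}$. The shrinking step, which is exactly where local compactness of~$G_\C$ enters, is what forces $L$ to land inside $V_n$ itself; everything else is routine point-chasing.
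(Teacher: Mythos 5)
Your proof is correct and is essentially the paper's own argument: the paper writes $k=g_kv_k$, chooses a compact neighbourhood $P_k\sub V_n$ of $v_k$, covers $K$ by the open sets $g_kP_k^0$, and takes $L$ to be the finite union of the $P_k$ — which is exactly your construction with $P_k=\wb{W_k}$ and $P_k^0$ playing the role of $W_k$. The only cosmetic difference is that you phrase the shrinking via ``open set with compact closure'' rather than ``compact neighbourhood and its interior.''
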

\begin{la}\label{changeK}
If $K,L\sub GV_n$ are compact sets such that
$GK\sub GL$,
let $\theta:=\max\{d(h)\colon h\in KL^{-1}\}<\infty$.
Then
$\|\gamma\|_{K,N}\leq e^{N\theta}\|\gamma\|_{L,N}$,
for all $\gamma\in \cA_n(G)$ and $N\in \N_0$.
\end{la}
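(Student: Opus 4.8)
The plan is to unravel the definition of the weighted seminorms and reduce the claimed inequality to the subadditivity of $d$ recorded in~(\ref{firstd}). Writing $\wt\gamma$ for the $\C$-analytic extension of $\gamma\in\cA_n(G)$ to $V_nG$, the quantity $\|\gamma\|_{K,N}=\|\wt\gamma\|_{K,N}$ is, in the extended sense used here for compact $K\sub GV_n$ (formula~(\ref{requir})), the supremum of $|\wt\gamma(k^{-1}g)|\,e^{Nd(g)}$ over $k\in K$ and $g\in G$, and likewise for $L$. Hence it suffices to bound each single term $|\wt\gamma(k^{-1}g)|\,e^{Nd(g)}$ by $e^{N\theta}\|\gamma\|_{L,N}$.

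So fix $k\in K$ and $g\in G$. First I would use the hypothesis $GK\sub GL$ to pick a good coset representative: since $k\in GK\sub GL$, there are $g_0\in G$ and $\ell\in L$ with $k=g_0\ell$. Then $h:=k\ell^{-1}=g_0$ lies in $G$ and in $KL^{-1}$, so $d(h)\le\theta$ (the maximum $\theta$ being understood over those points of the compact set $KL^{-1}$ that lie in $G$, where $d$ is defined; this is finite since $d$ is continuous and $(KL^{-1})\cap G$ is compact). Now set $g':=h^{-1}g=\ell k^{-1}g$. Because $h,g\in G$ we have $g'\in G$, and $k^{-1}g=\ell^{-1}(h^{-1}g)=\ell^{-1}g'$, so the argument of $\wt\gamma$ has been rewritten as a legitimate term of the $L$-seminorm, with $\ell\in L$ and $g'\in G$.

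It remains to compare the weights. From $g=hg'$ and the subadditivity $d(hg')\le d(h)+d(g')$ in~(\ref{firstd}), together with $d(h)\le\theta$, I get $d(g)-d(g')\le\theta$, whence
\[
|\wt\gamma(k^{-1}g)|\,e^{Nd(g)}=|\wt\gamma(\ell^{-1}g')|\,e^{Nd(g')}\,e^{N(d(g)-d(g'))}\le e^{N\theta}\,|\wt\gamma(\ell^{-1}g')|\,e^{Nd(g')}\le e^{N\theta}\|\gamma\|_{L,N}.
\]
Taking the supremum over $k\in K$ and $g\in G$ yields $\|\gamma\|_{K,N}\le e^{N\theta}\|\gamma\|_{L,N}$, as desired.

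The computation needs no analyticity or integrability input beyond~(\ref{firstd}); the only point demanding care is the bookkeeping of which products lie in $G$ rather than merely in $G_\C$. Concretely, the representative $\ell\in L$ must be chosen so that $h=k\ell^{-1}\in G$ (so that $d(h)$ is defined and bounded by $\theta$) and so that $g'=h^{-1}g\in G$ (so that $\ell^{-1}g'$ genuinely contributes to $\|\gamma\|_{L,N}$). I expect this---rather than any estimate---to be the main obstacle, and it is resolved precisely by the hypothesis $GK\sub GL$, which is engineered to supply such a representative.
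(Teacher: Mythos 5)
Your proof is correct and takes essentially the same route as the paper's: both decompose each $k\in K$ as $k=h\ell$ with $\ell\in L$ and $h=k\ell^{-1}\in KL^{-1}\cap G$, rewrite $k^{-1}g=\ell^{-1}(h^{-1}g)$ with $h^{-1}g\in G$, and invoke the subadditivity $d(g)\leq d(h)+d(h^{-1}g)$ from~(\ref{firstd}) to absorb the factor $e^{Nd(h)}\leq e^{N\theta}$. Your added remark that $\theta$ must be read as a maximum over $KL^{-1}\cap G$ (where $d$ is defined) is a reasonable clarification of a point the paper leaves implicit, but it does not change the argument.
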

We set up a notation for seminorms on $E_n$ which define its topology:
\begin{numba}\label{newnota}
Let $G$ be a connected Lie group with $G\sub G_\C$
and $E$ a topological $G$-module.
If $K\sub GV_n$ is compact and $p$ a continuous seminorm
on $E$, set
\begin{equation}\label{ncesems}
\|v\|_{K,p}:=\sup\{p(\wt{\pi}_v(z))\colon z\in K\}\quad\mbox{for $v\in E_n$.}
\end{equation}
\end{numba}
We need a variant of Lemma~\ref{pardep} ensuring complex
analyticity.
The $C^{1,0}_\C$-maps encountered here are defined as in \ref{RS},
except that complex directional derivatives are used
in the first factor.
\begin{la}\label{cxpardep}
Let $Z$, $E$ be complex locally convex spaces, $U\sub Z$ be open,
$Y$ a $\sigma$-compact locally compact space, $\mu$ a Borel measure on~$Y$
which is finite on compact sets,
and $f\colon U\times Y \to E$ be a $C^{1,0}_\C$-map.
Assume that~$E$ is sequentially complete
and assume that, for each continuous seminorm~$q$
on~$E$, there exists a $\mu$-integrable function $m_q \colon Y\to [0,\infty]$
such that $q(f(z,y))\leq m_q(y)$ for all $(z,y)\in U\times Y$.
Then $g(z):=\int_Y f(z,y)\,d\mu(y)$
exists in~$E$ as an absolutely convergent integral,
for all $z\in U$, and $g\colon U\to E$ is
$\C$-analytic.
\end{la}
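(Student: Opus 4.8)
The plan is to reduce the statement, by an exhaustion argument, to the case of a \emph{compact} domain of integration (where the standard theory applies with no fuss about domination of derivatives) and then to recover the general case as a uniform limit of $\C$-analytic maps. Since $Y$ is $\sigma$-compact, I would first write $Y=\bigcup_{n\in\N}K_n$ as an increasing union of compact sets and put
\[
g_n(z):=\int_{K_n}f(z,y)\,d\mu(y).
\]
As $\mu$ is finite on the compact set $K_n$, the integrand $f(z,\cdot)|_{K_n}$ is continuous and $E$ is sequentially complete, each $g_n(z)$ exists as a weak integral (cf.\ \cite[3.27]{Rud}). The domination hypothesis yields, for every continuous seminorm $q$ on $E$ and $m\ge n$,
\[
q\bigl(g_m(z)-g_n(z)\bigr)\le\int_{K_m\setminus K_n}m_q\,d\mu\le\int_{Y\setminus K_n}m_q\,d\mu,
\]
so $(g_n(z))_{n\in\N}$ is a Cauchy sequence; by sequential completeness it converges, its limit being the absolutely convergent integral $g(z)=\int_Y f(z,y)\,d\mu(y)$ (absolute convergence because $\int_Y q(f(z,y))\,d\mu(y)\le\int_Y m_q\,d\mu<\infty$). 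The key point is that the bound $\int_{Y\setminus K_n}m_q\,d\mu$ is \emph{independent of $z$}, so $g_n\to g$ uniformly on $U$ with respect to each continuous seminorm.

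Next I would show that each $g_n$ is $\C$-analytic. Here the exhaustion pays off: over the compact $K_n$ of finite measure no integrable majorant of the derivative is required, since continuous functions on compacta are automatically bounded. Continuity of $g_n$ follows from the continuity of $f$ and $\mu(K_n)<\infty$. To obtain analyticity I would invoke the characterisation of $\C$-analyticity by restriction to complex lines (cf.\ \cite{BaS}, \cite{GaN}): it suffices that $g_n$ be continuous and that $t\mapsto g_n(z_0+ta)$ be $\C$-analytic on the disc $D:=\{t\in\C\colon z_0+ta\in U\}$ around $0$, for all $z_0\in U$ and $a\in Z$. For fixed $y$, the map $t\mapsto f(z_0+ta,y)$ is $C^1_\C$ on $D$ with continuous derivative $t\mapsto d^{(1,0)}f(z_0+ta,y,a)$, hence $\C$-analytic with values in the sequentially complete space $E$ by one-variable Cauchy theory. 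Since $d^{(1,0)}f$ is continuous (as $f$ is $C^{1,0}_\C$ in the sense of~\ref{RS}), it is bounded on $\wb{D'}\times K_n\times\{a\}$ for any relatively compact open subdisc $D'$ with $\wb{D'}\sub D$; the resulting uniform bound justifies differentiation under the integral sign (the complex counterpart of Lemma~\ref{pardep}), giving that $t\mapsto g_n(z_0+ta)$ is $\C$-analytic with derivative $\int_{K_n}d^{(1,0)}f(z_0+ta,y,a)\,d\mu(y)$. Thus each $g_n$ is $\C$-analytic.

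Finally I would pass to the limit. Since $g_n\to g$ uniformly on $U$ with respect to each continuous seminorm, each $g_n$ is $\C$-analytic, and $E$ is sequentially complete, a Weierstrass-type convergence theorem shows that the limit $g$ is again $\C$-analytic (cf.\ \cite{BaS}, \cite{GaN}), which completes the proof. I expect the main obstacle to lie in quoting the two infinite-dimensional complex-analysis facts in exactly the form valid for sequentially complete targets, namely the line-restriction characterisation (continuity plus Gâteaux holomorphy implies $\C$-analyticity) and the Weierstrass convergence theorem. The second delicate point is structural rather than computational: one must resist differentiating under the integral globally over $Y$, because the hypotheses dominate $f$ but \emph{not} its derivative $d^{(1,0)}f$, so routing the argument through the compact exhaustion $K_n$ is genuinely necessary rather than a convenience.
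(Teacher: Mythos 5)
Your proposal is correct and follows essentially the same route as the paper's proof: exhaust $Y$ by compacta $K_n$, show each partial integral $g_n(z)=\int_{K_n}f(z,y)\,d\mu(y)$ is $\C$-analytic via differentiation under the integral (where no majorant for the derivative is needed, exactly as you observe), use the $z$-independent bound $\int_{Y\setminus K_n}m_q\,d\mu$ to get uniform convergence $g_n\to g$ and absolute convergence of the full integral, and conclude by the Weierstrass-type theorem for sequentially complete targets (the paper cites Bochnak--Siciak for this). The only deviation is in the middle step, and it is cosmetic: where you restrict to complex lines and invoke the ``continuous plus G\^{a}teaux-holomorphic implies analytic'' criterion, the paper applies Lemma~\ref{pardep} directly on~$U$ to see that $g_n$ is $C^1$ with differential $w\mapsto\int_{K_n}d^{(1,0)}f(z,y,w)\,d\mu(y)$, which is $\C$-linear, and then quotes the fact that a $C^1$-map with complex-linear differentials into a sequentially complete space is $\C$-analytic.
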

Also the following fact from \cite{GKS} will be used:
\begin{la}\label{intrep1}
Let $G$ be a connected Lie group with $G\sub G_\C$ and
$(E,\pi)$ be a sequentially complete proto-Banach $G$-module.
Let $n\in \N$. Then $w:=\Pi(\gamma,v)\in E_n$ for all $\gamma\in \cA_n(G)$
and $v\in E$. The $\C$-analytic extension of the orbit map $\pi_w$ of $w$ is given by
\begin{equation}\label{integals}
\wt{\pi}_w\colon GV_n\to E,\quad z\mto\int_G\wt{\gamma}(z^{-1}y)\pi(y,v)\,d\lambda_G(y).
\end{equation}
The $E$-valued integrals in {\rm(\ref{integals})} converge absolutely.
\end{la}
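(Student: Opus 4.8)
The plan is to verify the three claims separately: absolute convergence of the integrals, $\C$-analyticity of the resulting map $\wt\pi_w$ on $GV_n$, and the identity $\wt\pi_w|_G=\pi_w$ (which then forces $w\in E_n$ with $\wt\pi_w$ its analytic extension). Everything rests on one estimate. Since $E$ is proto-Banach, its topology is defined by $G$-continuous seminorms; hence every continuous seminorm $q$ satisfies $q\leq C'p$ for some finite maximum $p$ of such seminorms, and by (\ref{good2}) (applied to each constituent and taking maxima of the constants) there are $C,c>0$ with $p(\pi(y,v))\leq Ce^{cd(y)}p(v)$ for all $y\in G$. On the other hand, for any compact $K\sub GV_n$, Lemmas~\ref{betterK} and~\ref{changeK} guarantee $\|\gamma\|_{K,N}<\infty$ when (\ref{requir}) is read for $K\sub GV_n$, so that $|\wt\gamma(z^{-1}y)|\leq\|\gamma\|_{K,N}\,e^{-Nd(y)}$ for all $z\in K$ and $y\in G$. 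Multiplying and using absolute homogeneity of $q$,
\[ q\bigl(\wt\gamma(z^{-1}y)\,\pi(y,v)\bigr)\leq C'C\,p(v)\,\|\gamma\|_{K,N}\,e^{(c-N)d(y)}\qquad(z\in K,\ y\in G). \]
Choosing $N\geq c+\ell$ with $\ell$ as in (\ref{secd}) makes the right-hand side a $\lambda_G$-integrable majorant independent of $z\in K$. The special case $K=\{1\}$, $z=1$ already gives absolute convergence of $w=\Pi(\gamma,v)$, and taking $z$ arbitrary in $GV_n$ gives absolute convergence of the integral defining $\wt\pi_w(z)$.

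For $\C$-analyticity I would argue locally. Fix $z_0\in GV_n$ and a $\C$-analytic chart $\phi\colon\Omega\to W$ of $G_\C$ with $\Omega\sub L(G)_\C$ open and $W$ a relatively compact neighbourhood of $z_0$ with $K:=\wb W\sub GV_n$. Put $F(\zeta,y):=\wt\gamma(\phi(\zeta)^{-1}y)\,\pi(y,v)$ on $\Omega\times G$. For each fixed $y$ the map $\zeta\mapsto\phi(\zeta)^{-1}y$ is a $\C$-analytic map $\Omega\to V_nG$ (a composite of the chart, inversion and right translation, all $\C$-analytic on $G_\C$), so $\zeta\mapsto\wt\gamma(\phi(\zeta)^{-1}y)$ is $\C$-analytic; combined with the continuity of $y\mapsto\pi(y,v)$, this makes $F$ a $C^{1,0}_\C$-map, by the complex-analytic counterpart of Lemma~\ref{specsit}. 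The estimate above (with this $K$) supplies, for each continuous seminorm $q$, an integrable majorant $m_q$ with $q(F(\zeta,y))\leq m_q(y)$ for all $\zeta\in\Omega$. As $G$ is $\sigma$-compact and locally compact and $E$ is sequentially complete, Lemma~\ref{cxpardep} applies and shows $\zeta\mapsto\int_GF(\zeta,y)\,d\lambda_G(y)=\wt\pi_w(\phi(\zeta))$ to be $\C$-analytic on $\Omega$; composing with $\phi^{-1}$, $\wt\pi_w$ is $\C$-analytic on $W$. Since $z_0$ was arbitrary, $\wt\pi_w$ is $\C$-analytic on $GV_n$.

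It remains to identify $\wt\pi_w$ with the extension of $\pi_w$. For $g\in G$ we have $\wt\gamma(g^{-1}y)=\gamma(g^{-1}y)$, so left invariance of $\lambda_G$ (substitute $y=gx$) and continuity and linearity of $\pi(g,\cdot)$ give
\[ \wt\pi_w(g)=\int_G\gamma(g^{-1}y)\pi(y,v)\,d\lambda_G(y)=\int_G\gamma(x)\,\pi\bigl(g,\pi(x,v)\bigr)\,d\lambda_G(x)=\pi(g,w), \]
the last equality pulling the continuous operator $\pi(g,\cdot)$ out of the weak integral. Hence $\wt\pi_w|_G=\pi_w$, so $\pi_w$ extends $\C$-analytically to $GV_n$; by Lemma~\ref{extGC} this yields $w\in E^\omega$, in fact $w\in E_n$, and uniqueness of analytic continuation identifies $\wt\pi_w$ as the extension displayed in (\ref{integals}). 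I expect the main obstacle to lie in the second paragraph: producing the $\zeta$-uniform integrable majorant and checking the $C^{1,0}_\C$-hypothesis so that Lemma~\ref{cxpardep} can be applied in a chart. The former combines the growth bound (\ref{good2}), the integrability (\ref{secd}) and the compact-set form of the norms $\|\cdot\|_{K,N}$ from Lemmas~\ref{betterK}--\ref{changeK}; the latter is the routine complex analogue of Lemma~\ref{specsit}.
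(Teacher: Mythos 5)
Your proof is correct and follows essentially the same route as the paper's: build an integrable majorant of the form $\mathrm{const}\cdot e^{-\ell d(y)}$ from (\ref{good2}), (\ref{secd}) and the decay norms $\|\cdot\|_{K,N}$, apply Lemma~\ref{cxpardep} (in charts) to get absolute convergence and $\C$-analyticity of (\ref{integals}), and use left invariance of Haar measure to identify the integral with $\pi_w$ on $G$, whence $w\in E_n$. The only cosmetic difference is that you obtain uniformity of the majorant over compact $K\sub GV_n$ by invoking Lemmas~\ref{betterK} and~\ref{changeK}, whereas the paper writes $z=xk$ with $k$ in a compact subset of $V_n$ and absorbs the translation by $x$ directly via $d(y)\leq d(x)+d(x^{-1}y)$, at the cost of a factor $e^{Nd(x)}$.
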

The next two lemmas will enable a proof of Proposition~D.
\begin{la}\label{sin}
Let $G$ be a connected Lie group such that $G\sub G_\C$
and $L(G)$ is a compact Lie algebra. Then there exists a basis
$(V_n)_{n\in\N}$ of open, connected, relatively compact
identity neighbourhoods $V_n\sub G_\C$ such that
$\wb{V_{n+1}}\sub V_n$ and
$gV_ng^{-1}=V_n$ for all $n\in\N$ and $g\in G$.
In addition, one can achieve that
$\{gxg^{-1}\colon g\in G,\,x\in K\}$ has compact
closure in $V_n$,
for each $n\in \N$ and each compact subset $K\sub V_n$.
\end{la}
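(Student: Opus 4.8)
The plan is to realize the $V_n$ as exponential images of norm balls in $L(G_\C)$ that are invariant under the adjoint action of~$G$; the compactness of $L(G)$ is exactly what supplies such an invariant norm. First I would set $\cg:=L(G)$ and $\cg_\C:=L(G_\C)=\cg\oplus i\cg$, and exploit compactness to fix an inner product $\langle\cdot,\cdot\rangle$ on $\cg$ for which each $e^{\ad(x)}$ $(x\in\cg)$ is an isometry. Since $G$ is connected, $\Ad(G)$ is generated by the operators $\Ad(\exp x)=e^{\ad(x)}$, so every $\Ad(g)$ $(g\in G)$ is an isometry of $(\cg,\langle\cdot,\cdot\rangle)$. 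Setting $\|X+iY\|^2:=\|X\|^2+\|Y\|^2$ for $X,Y\in\cg$ then defines a norm on $\cg_\C$, and because $\Ad(g)$ acts on $\cg_\C$ as the complexification of its action on $\cg$, this norm satisfies $\|\Ad(g)Z\|=\|Z\|$ for all $g\in G$, $Z\in\cg_\C$. Writing $B_r:=\{Z\in\cg_\C:\|Z\|<r\}$, each $B_r$ is then open, convex (hence connected), bounded, symmetric ($-B_r=B_r$) and $\Ad(G)$-invariant.

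Next I would fix the $V_n$. As $\exp:=\exp_{G_\C}$ is a local diffeomorphism at $0$, I can choose $r_0>0$ so that $\exp$ restricts to a diffeomorphism of an open neighbourhood of $\wb{B_{r_0}}$ onto an open subset of $G_\C$; then I pick reals $r_0\geq r_1>r_2>\cdots\to0$ and put $V_n:=\exp(B_{r_n})$. Each $V_n$ is open (image of an open set under this diffeomorphism), connected, and relatively compact (as $\wb{V_n}\sub\exp(\wb{B_{r_n}})$, the continuous image of a compact set). The identities $\exp(-Z)=\exp(Z)^{-1}$ and $-B_{r_n}=B_{r_n}$ give $V_n^{-1}=V_n$ (symmetry); $r_{n+1}<r_n$ yields $\wb{V_{n+1}}\sub\exp(\wb{B_{r_{n+1}}})\sub\exp(B_{r_n})=V_n$; and since $r_n\to0$ while $\exp$ is a homeomorphism near $0$, the $V_n$ form a neighbourhood basis at $1$. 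For conjugation invariance I would use that $c_g\colon x\mto gxg^{-1}$ is an automorphism of $G_\C$ with $c_g\circ\exp=\exp\circ\Ad(g)$, so that $\Ad(g)B_{r_n}=B_{r_n}$ forces $gV_ng^{-1}=\exp(\Ad(g)B_{r_n})=V_n$.

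For the supplementary property, given a compact $K\sub V_n$ I would transport it back through the homeomorphism $\exp|_{B_{r_n}}$ to a compact set $K':=(\exp|_{B_{r_n}})^{-1}(K)\sub B_{r_n}$, so that $r':=\max\{\|Z\|:Z\in K'\}<r_n$ and $K'\sub\wb{B_{r'}}$. Then for $g\in G$ and $x=\exp(Z)\in K$ one has $gxg^{-1}=\exp(\Ad(g)Z)$ with $\|\Ad(g)Z\|=\|Z\|\leq r'$, whence $\{gxg^{-1}:g\in G,\,x\in K\}\sub\exp(\wb{B_{r'}})$; the latter is compact and, since $r'<r_n$ gives $\wb{B_{r'}}\sub B_{r_n}$, lies in $V_n$. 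Being closed in $G_\C$, it contains the closure of the conjugation orbit, which is therefore a compact subset of $V_n$, as required.

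The only substantive input is the first step: the hypothesis that $L(G)$ be compact is used precisely to produce the $\Ad(G)$-invariant norm on $L(G_\C)$, after which every claim reduces to the intertwining relation $c_g\circ\exp=\exp\circ\Ad(g)$ together with the invariance $\|\Ad(g)Z\|=\|Z\|$. I expect no genuine obstacle beyond bookkeeping of which radius controls which set and verifying that $\exp$ is an honest homeomorphism on the relevant balls.
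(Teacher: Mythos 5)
Your proposal is correct and follows essentially the same route as the paper's proof: an $\Ad(G)$-invariant inner product on $L(G)$ (from compactness of the Lie algebra plus connectedness of $G$), extended to $L(G)_\C$, with the $V_n$ defined as exponential images of invariant balls on which $\exp_{G_\C}$ is a homeomorphism, and the supplementary property obtained by pulling a compact set back to a ball of strictly smaller radius. The only cosmetic difference is that the paper rescales the form so that $\exp_{G_\C}$ is a homeomorphism on the unit ball and takes radii $1/n$, whereas you take an arbitrary decreasing null sequence $r_n$; the arguments are otherwise identical.
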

\begin{la}\label{intrnew}
Let $G$ be a connected Lie group with a compact Lie algebra and $G\sub G_\C$.
If $(E,\pi)$ is a sequentially complete proto-Banach $G$-module,
let $(V_n)_{n\in\N}$ be as in Lemma~{\rm\ref{sin}}
and define $E_n$ using $V_n$, for each $n\in\N$.
Then $w:=\Pi(\gamma,v)\in E_n$ for all $\gamma\in \cA(G)$
and $v\in E_n$. The $\C$-analytic extension of the orbit map $\pi_w$ of $w$ is given by
\begin{equation}\label{integnew}
\wt{\pi}_w\colon GV_n\to E,\quad z\mto\int_G\gamma(y)\wt{\pi}_v(zy)\,d\lambda_G(y).
\end{equation}
The $E$-valued integrals in {\rm(\ref{integnew})} converge absolutely.
\end{la}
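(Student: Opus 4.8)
The plan is to verify directly that the right-hand side of~(\ref{integnew}) defines a $\C$-analytic map
\[
F\colon GV_n\to E,\quad F(z):=\int_G\gamma(y)\,\wt{\pi}_v(zy)\,d\lambda_G(y),
\]
which restricts to the orbit map $\pi_w$ on $G$; by uniqueness of $\C$-analytic continuation this will force $w:=\Pi(\gamma,v)\in E_n$ and $\wt{\pi}_w=F$. First I would check that the integrand makes sense: for $z=gv_0\in GV_n$ (with $g\in G$, $v_0\in V_n$) and $y\in G$, conjugation invariance $y^{-1}v_0y\in V_n$ from Lemma~\ref{sin} gives $zy=(gy)(y^{-1}v_0y)\in GV_n$, so $\wt{\pi}_v(zy)$ is defined. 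I would also record the equivariance $\wt{\pi}_v(hw)=\pi(h,\wt{\pi}_v(w))$ for $h\in G$, $w\in GV_n$: both sides are $\C$-analytic in $w$ on the connected set $GV_n$ (the second because $\pi(h,.)$ is continuous linear, the first because $w\mapsto hw$ is a holomorphic self-map of $GV_n$), and they agree for $w\in G$, where both reduce to $\pi(hw,v)=\pi(h,\pi(w,v))$; hence they agree throughout by uniqueness of $\C$-analytic continuation from $G$.

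The technical heart is an integrable majorant, uniform for $z$ in a compact set $K\subseteq GV_n$; this is exactly where the compactness of $L(G)$ enters. By local compactness there are $g_1,\dots,g_m\in G$ and a compact $L\subseteq V_n$ with $K\subseteq\bigcup_i g_iL$, so each $z\in K$ factors as $z=gv_0$ with $v_0\in L$ and $d(g)\le\theta:=\max_i d(g_i)<\infty$. Writing $zy=(gy)(y^{-1}v_0y)$ and using the equivariance, for any $G$-continuous seminorm $p$ we obtain
\[
p(\wt{\pi}_v(zy))=p\big(\pi(gy,\wt{\pi}_v(y^{-1}v_0y))\big)\le Ce^{cd(gy)}\,p(\wt{\pi}_v(y^{-1}v_0y))
\]
by~(\ref{good2}). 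Here $d(gy)\le\theta+d(y)$ by~(\ref{firstd}), while $y^{-1}v_0y$ ranges over the compact set $L'\subseteq V_n$ furnished by the final assertion of Lemma~\ref{sin}, so $p(\wt{\pi}_v(y^{-1}v_0y))\le M:=\|v\|_{L',p}<\infty$. Combining with the superdecay~(\ref{reusno}) of $\gamma$ and choosing $N\in\N_0$ with $N\ge c+\ell$ for $\ell$ as in~(\ref{secd}) gives
\[
\int_G|\gamma(y)|\,p(\wt{\pi}_v(zy))\,d\lambda_G(y)\le Ce^{c\theta}M\,\|\gamma\|_N\int_Ge^{-\ell d(y)}\,d\lambda_G(y)<\infty
\]
uniformly in $z\in K$, which yields absolute convergence.

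With the majorant in hand, $\C$-analyticity of $F$ is local, so I would work in a chart around an arbitrary point of $GV_n$ and apply Lemma~\ref{cxpardep} with $Y=G$: the integrand $(z,y)\mapsto\gamma(y)\wt{\pi}_v(zy)$ is $C^{1,0}_\C$, since $z\mapsto zy$ is holomorphic with values in $GV_n$, $\wt{\pi}_v$ is $\C$-analytic, and $\gamma$ is continuous (cf.\ the $C^{r,s}$-calculus of~\cite{Alz},~\cite{AaS}), and the previous paragraph supplies the required dominating function on any chart neighbourhood with compact closure. Hence $F$ is $\C$-analytic on all of $GV_n$.

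Finally I would identify $F|_G$. For $g\in G$ one has $\wt{\pi}_v(gy)=\pi(gy,v)=\pi(g,\pi(y,v))$, and since $\pi(g,.)$ is continuous linear it may be pulled out of the absolutely convergent integral defining $w=\Pi(\gamma,v)$, giving $F(g)=\pi(g,w)=\pi_w(g)$. Thus $F$ is a $\C$-analytic extension of $\pi_w$ to $GV_n$, so $w\in E_n$ and $\wt{\pi}_w=F$ is given by~(\ref{integnew}); the uniform estimate above moreover bounds $\|w\|_{K,p}$ in terms of $\|\gamma\|_N$ and $\|v\|_{L',p}$, which is precisely what the subsequent continuity argument for Proposition~D will require. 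I expect the only real obstacle to be this uniform majorant, namely arranging that the conjugates $y^{-1}v_0y$ stay in a fixed compact subset of $V_n$; this is exactly the point at which the compactness of $L(G)$, through Lemma~\ref{sin}, is indispensable.
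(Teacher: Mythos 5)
Your proposal is correct and follows essentially the same route as the paper's proof: the same equivariance identity $\wt{\pi}_v(zy)=\pi(gy,\wt{\pi}_v(y^{-1}v_0y))$ made possible by the conjugation-invariant neighbourhoods of Lemma~\ref{sin}, the same integrable majorant built from~(\ref{good2}), the superdecay of $\gamma$ and~(\ref{secd}), the same appeal to Lemma~\ref{cxpardep} for absolute convergence and $\C$-analyticity, and the same identification of the restriction to $G$ with $\pi_w$. The only differences are presentational: you justify the equivariance identity explicitly by analytic continuation and arrange the majorant uniformly over an arbitrary compact subset of $GV_n$ via a finite translate cover, whereas the paper asserts the identity directly and localizes at a single point $x_0z_0$ with $K\sub V_n$ a compact neighbourhood of $z_0$.
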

\begin{la}\label{reach}
In Lemma~{\rm\ref{intrep1}}, the bilinear map
$\Pi\colon \cA(G)\times E\to E^\omega$ is\linebreak
separately continuous,
hypocontinuous in the second argument and\linebreak
sequentially continuous.
If $E$ is barrelled $($e.g., if $E$ is a Fr\'{e}chet space$)$,
then $\Pi$ is hypocontinuous in both arguments.
\end{la}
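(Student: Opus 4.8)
The plan is to follow the same strategy as in the proof of Lemma~\ref{hypoetc}: reduce everything to separate continuity, then upgrade to the remaining assertions by barrelledness of the first factor. The one genuinely new input I would use is the continuity of the maps~(\ref{mpxx}), i.e.\ that $\cA_n(G)\times E\to E_n$, $(\gamma,v)\mapsto\Pi(\gamma,v)$, is continuous for each $n\in\N$; everything else is soft functional analysis. I stress at the outset that we cannot expect to extract \emph{joint} continuity of $\Pi\colon\cA(G)\times E\to E^\omega$ this way — that is exactly the failure recorded in Remark~2 — so the whole point of the lemma is to isolate the weaker continuity properties that the direct-limit structure does deliver.

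\textbf{Separate continuity.} First I would record that $\Pi$ is bilinear (linear in $\gamma$ since $\gamma\mapsto\int_G\gamma(x)\pi(x,v)\,d\lambda_G(x)$ is, and linear in $v$ since each $\pi(x,\cdot)$ is). For fixed $v\in E$, the partial map $\Pi(\cdot,v)\colon\cA(G)\to E^\omega$ is linear, so by the universal property of the locally convex direct limit $\cA(G)=\dl\,\cA_n(G)$ it is continuous as soon as each restriction $\cA_n(G)\to E^\omega$ is. But that restriction factors as $\cA_n(G)\to E_n\hookrightarrow E^\omega$, where the first map $\gamma\mapsto\Pi(\gamma,v)$ is the continuous partial map of the jointly continuous map~(\ref{mpxx}), and $E_n\hookrightarrow E^\omega$ is continuous by definition of the direct-limit topology on $E^\omega$. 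For fixed $\gamma\in\cA(G)$, pick $n$ with $\gamma\in\cA_n(G)$; then $\Pi(\gamma,\cdot)\colon E\to E^\omega$ factors as the continuous partial map $E\to E_n$ of~(\ref{mpxx}) followed by the inclusion $E_n\hookrightarrow E^\omega$, hence is continuous. This establishes separate continuity.

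\textbf{Upgrading via barrelledness.} Next I would note that $\cA(G)=\dl\,\cA_n(G)$ is barrelled, being a countable locally convex direct limit of Fr\'echet spaces (just as $C^\infty_c(G)$ is in Lemma~\ref{hypoetc}). Consequently a separately continuous bilinear map on $\cA(G)\times E$ is automatically hypocontinuous with respect to the bounded subsets of the second factor $E$ — i.e.\ hypocontinuous in its second argument — by~\cite[II.5.2]{SaW}; and a bilinear map hypocontinuous in one argument is sequentially continuous by~\cite[p.\,157, Remark following \S40, 1., (5)]{Koe}. Concretely, for $\gamma_k\to\gamma$ and $v_k\to v$ one splits $\Pi(\gamma_k,v_k)-\Pi(\gamma,v)=\Pi(\gamma_k-\gamma,v_k)+\Pi(\gamma,v_k-v)$; the second term vanishes by continuity of $\Pi(\gamma,\cdot)$, and the first by hypocontinuity applied to the bounded set $\{v_k\colon k\in\N\}$. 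Finally, if $E$ is barrelled as well, the symmetric application of~\cite[II.5.2]{SaW} yields hypocontinuity with respect to the bounded subsets of $\cA(G)$ too, so $\Pi$ is hypocontinuous in both arguments.

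\textbf{Main obstacle.} There is no deep obstacle here: once~(\ref{mpxx}) is granted, the lemma is an assembly of standard facts, mirroring Lemma~\ref{hypoetc} step for step. The one point requiring care is the barrelledness of $\cA(G)$, which underpins the entire upgrade and rests on the steps $\cA_n(G)$ being Fr\'echet (hence barrelled) together with the permanence of barrelledness under countable locally convex direct limits. The other thing to keep firmly in view is conceptual: the argument is deliberately confined to separate, hypo-, and sequential continuity, because passing to genuine joint continuity would require $\cO_{DL}=\cO_{lcx}$, which generally fails (see Remark~2 and Proposition~E).
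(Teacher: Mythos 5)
Your proposal is correct and follows essentially the same route as the paper's own proof: separate continuity is extracted from the continuity of the maps~(\ref{mpxx}) (fixing $\gamma$ via some $\cA_n(G)$ containing it, and fixing $v$ via the universal property of the locally convex direct limit $\cA(G)=\dl\,\cA_n(G)$), and the remaining assertions follow from barrelledness of $\cA(G)$ together with \cite[II.5.2]{SaW} and the cited remark in K\"{o}the, exactly as in Lemma~\ref{hypoetc}. Your explicit sequence-splitting argument for sequential continuity and your remarks on the obstruction to joint continuity are correct elaborations, not deviations.
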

Recall that a topological space $X$ is said to be sequentially compact if it is Hausdorff and every sequence in $X$ has a convergent subsequence \cite[p.\,208]{Eng}.
\begin{la}\label{boundd}
If $E$ is a locally convex space and $K\sub E$ a sequentially\linebreak
compact subset, then $K$ is bounded in~$E$.
\end{la}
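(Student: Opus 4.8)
The plan is to reduce boundedness to the boundedness of each continuous seminorm and then exploit sequential compactness to derive a contradiction. Recall that a subset $K$ of a locally convex space $E$ is bounded if and only if every continuous seminorm $p$ on $E$ satisfies $\sup_{x\in K}p(x)<\infty$ (equivalently, $K$ is absorbed by every $0$-neighbourhood). So it suffices to fix an arbitrary continuous seminorm $p$ and show that $p$ is bounded on $K$.

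I would argue by contradiction. Suppose $\sup_{x\in K}p(x)=\infty$. Then I can choose a sequence $(x_n)_{n\in\N}$ in $K$ with $p(x_n)\geq n$ for all $n\in\N$. Since $K$ is sequentially compact, $(x_n)_{n\in\N}$ admits a subsequence $(x_{n_k})_{k\in\N}$ converging to some $x\in K$.

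The key step is then the continuity of the seminorm: from the reverse triangle inequality $|p(y)-p(x)|\leq p(y-x)$ one obtains $p(x_{n_k})\to p(x)$ as $k\to\infty$. But $p(x_{n_k})\geq n_k\to\infty$, whereas $p(x)\in[0,\infty[$ is finite, a contradiction. Hence $p$ is bounded on $K$, and since $p$ was an arbitrary continuous seminorm, $K$ is bounded in~$E$.

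The argument has essentially no obstacle; the only point requiring (minor) care is the passage from sequential compactness to the contradiction, namely that continuity of $p$ forces the values $p(x_{n_k})$ to converge (hence stay bounded) along a convergent subsequence, which is incompatible with the unbounded choice $p(x_n)\geq n$.
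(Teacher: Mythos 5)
Your proof is correct and follows essentially the same route as the paper's: both arguments extract a sequence on which a continuous seminorm tends to infinity and then use continuity of the seminorm to show such a sequence cannot have a convergent subsequence, contradicting sequential compactness. Your write-up just spells out the details (the seminorm characterization of boundedness and the reverse triangle inequality) that the paper leaves implicit.
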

The following fact has also been used
in \cite[Appendix~B]{GKS} (without proof).
\begin{la}\label{Montel}
$\cA_n(G)$ is a Montel space,
for each Lie group $G$ such that $G\sub G_\C$
and $n\in \N$.
\end{la}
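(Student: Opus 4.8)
The plan is to transport the problem through the isomorphism $\wt{\cA}_n(G)\isom\cA_n(G)$ and to prove that $\wt{\cA}_n(G)$ is a Fr\'echet space with the Heine--Borel property, which for a barrelled space is precisely the definition of a Montel space. First I would record that the topology is generated by countably many of the norms $\|\cdot\|_{K,N}$: since $V_n$ is a relatively compact open subset of the finite-dimensional complex manifold $G_\C$, it is $\sigma$-compact, so a cofinal exhaustion $K_1\subseteq K_2\subseteq\cdots$ of $V_n$ by compact sets, together with $N\in\N_0$, yields a countable generating family. Completeness is then immediate: a sequence that is Cauchy in every $\|\cdot\|_{K,N}$ is in particular Cauchy for $\|\cdot\|_{K,0}$, hence converges locally uniformly on $V_nG$ to a $\C$-analytic limit, and letting $j\to\infty$ in the defining suprema shows the weighted bounds pass to the limit and that convergence holds in each norm. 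Thus $\wt{\cA}_n(G)$ is Fr\'echet, hence barrelled, and (being metrizable) it remains only to show that every bounded sequence has a convergent subsequence.

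So let $\{\eta_j\}_{j\in\N}$ be bounded, i.e.\ $C_{K,N}:=\sup_j\|\eta_j\|_{K,N}<\infty$ for every compact $K\subseteq V_n$ and every $N\in\N_0$. Because $e^{Nd(g)}\geq 1$, the bound for $N=0$ gives $|\eta_j(z^{-1}g)|\leq C_{K,0}$ for all $z\in K$, $g\in G$; combined with the (right-handed analogue of the) containment in Lemma~\ref{betterK}, this shows that the $\eta_j$ are uniformly bounded on every compact subset of $V_nG$. Since $V_nG$ is an open subset of the finite-dimensional complex manifold $G_\C$, Montel's theorem for holomorphic functions applies, and after passing to a subsequence I may assume $\eta_j\to\eta$ uniformly on compact subsets of $V_nG$, with $\eta\colon V_nG\to\C$ being $\C$-analytic. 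Passing to the limit in $|\eta_j(z^{-1}g)|e^{Nd(g)}\leq C_{K,N}$ yields $\|\eta\|_{K,N}\leq C_{K,N}<\infty$, so that $\eta\in\wt{\cA}_n(G)$.

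The step I expect to be the main obstacle is upgrading this local uniform convergence to convergence in each weighted norm $\|\cdot\|_{K,N}$, whose supremum runs over the non-compact group $G$. Here I would exploit the gap between consecutive weights. Fix compact $K\subseteq V_n$, an $N\in\N_0$ and $\ve>0$. Since a left invariant Riemannian distance is complete, the ball $\{g\in G\colon d(g)\leq R\}$ is compact (Hopf--Rinow), whence $\{z^{-1}g\colon z\in K,\ d(g)\leq R\}$ is a compact subset of $V_nG$ on which $\eta_j\to\eta$ uniformly while $e^{Nd(g)}\leq e^{NR}$ stays bounded; so the part of the supremum over $d(g)\leq R$ tends to $0$ as $j\to\infty$. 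For the remaining range $d(g)>R$ I would estimate
\[
|\eta_j(z^{-1}g)-\eta(z^{-1}g)|\,e^{Nd(g)}
=|\eta_j(z^{-1}g)-\eta(z^{-1}g)|\,e^{(N+1)d(g)}\,e^{-d(g)}
\leq\bigl(C_{K,N+1}+\|\eta\|_{K,N+1}\bigr)e^{-R},
\]
using $d(g)>R$ and the $(N+1)$-bounds, which is small once $R$ is large, uniformly in $j$. Choosing $R$ first and then $j$ large forces $\|\eta_j-\eta\|_{K,N}\to 0$. Hence the subsequence converges in $\wt{\cA}_n(G)$, every bounded set is relatively compact, and $\cA_n(G)$ is a Montel space.
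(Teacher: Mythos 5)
Your proof is correct and follows essentially the same route as the paper: both rest on the Montel property of holomorphic functions on the finite-dimensional complex manifold (the paper uses that $\cO(V_nG)$ with the compact-open topology is a Montel space, you invoke the classical normal-families theorem) combined with the identical weight-gap estimate $e^{Nd(g)}=e^{(N+1)d(g)}e^{-d(g)}$, which splits the weighted supremum into a compact region (handled by locally uniform convergence/approximation) and a far region (handled by the $(N+1)$-bounds). The only difference is packaging: the paper proves precompactness via finite $\ve$-nets, while you extract convergent subsequences --- equivalent formulations for the metrizable space $\wt{\cA}_n(G)$.
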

\section{Proof of Proposition~C}
Let $W$ be a $0$-neighbourhood in $E^\omega$.
Then there are $0$-neighbourhoods $S_n\sub E_n$ for $n\in \N$
such that $\sum_{n\in \N}S_n\sub W$.
Shrinking $S_n$ if necessary, we may assume that $S_n=\{ v \in E_n\colon \|v\|_{K_n,p_n}<1\}$
for some compact subset $K_n\sub G V_n$ and
$G$-continuous seminorm~$p_n$ on~$E$
(with notation as in \ref{newnota}).\\[2.4mm]
For the intermediate steps of the proof,
we can proceed similarly as in \cite[proof of Proposition~4.6]{GKS}:
By \ref{collect},
there exist $C_n>0$, $m_n\in \N_0$ such that
$p_n(\pi(g,v))\leq p_n(v)C_ne^{m_n d(g)}$
for all $g\in G$ and $v\in E$.
Pick $\ell\in \N_0$ with $C:=\int_Ge^{-\ell d(y)}\,d\lambda_G(y)<\infty$
(see (\ref{secd})) and set $N_n:=m_n+\ell$.
For $\gamma\in \cA_n(G)$ and $v\in E$,
we have $w:=\Pi(\gamma,v)\in E_n$ by Lemma~\ref{intrep1},
and $\wt{\pi}_w$ is given by (\ref{integals}).
The integrand in (\ref{integals}) admits the estimate
$p_n(\wt{\gamma}(z^{-1}y) \pi(y,v)) \leq p_n(v)C_n\|\gamma\|_{K_n,N_n}e^{-\ell d(y)}$
for all $z\in K_n$, $y\in G$ (cf.\ (\ref{dochuse}) with $x=1$). Hence
\[
p_n(\wt{\pi}_w(z)) \leq p_n(v)C C_n\|\gamma\|_{K_n,N_n}.
\]
By Lemma~\ref{betterK},
there exists a compact subset $L_n\sub V_n$ such that
$GK_n\sub GL_n$.
Let $\theta_n:=\max\{d(h)\colon h\in K_nL_n^{-1}\}$.
If $E$ has the countable neighbourhood property,
then there exists a continuous seminorm $p$ on~$E$
and constants $a_n\geq 0$ such that
$p_n\leq a_np$ for all $n\in \N$.
Thus, using Lemma~\ref{changeK},
\[
p_n(\wt{\pi}_w(z))\leq a_np(v)C C_n e^{\theta_n N_n}\|\gamma\|_{L_n,N_n}\,.
\]
Choose $\ve_n>0$ so small that $\ve_na_n CC_n e^{\theta_n N_n}<1$,
and set $P_n:=\{\gamma\in \cA_n(G)\colon\|\gamma\|_{L_n,N_n}<\ve_n\}$.
Then $\|\Pi(\gamma,v)\|_{K_n,p_n}\leq \ve_na_n C C_n e^{\theta_n N_n}<1$
for all $v\in B^p_1(0)$ and $\gamma\in P_n$
and thus $\Pi(P_n\times B^p_1(0))\sub S_n$.
Then $P:=\sum_{n\in \N}P_n$ is a $0$-neighbourhood in $\cA(G)$ and
\[
\Pi(P\times B^p_1(0)))\sub \sum_{n\in \N}  \Pi(P_n\times B^p_1(0)))\sub \sum_{n\in \N} S_n\sub W\,.
\]
Hence $\Pi$ is continuous at $(0,0)$ and hence continuous
(as $\Pi$ is bilinear).
\section{Proof of Proposition~D}\label{secpropD}
Choose $(V_n)_{n\in\N}$ as in Lemma~\ref{sin}.
Let $W$ be a $0$-neighbourhood in $E^\omega$.
Then there are $0$-neighbourhoods $S_n\sub E_n$ for $n\in \N$
such that $\sum_{n\in \N}S_n\sub W$.
Shrinking $S_n$ if necessary, we may assume that $S_n=\{ v \in E_n\colon \|v\|_{K_n,p_n}<1\}$
for some compact subset $K_n\sub G V_n$ and
$G$-continuous seminorm~$p_n$ on~$E$
(with notation as in \ref{newnota}).
After increasing $K_n$, we may assume that $K_n=A_nB_n$ with compact
subsets $A_n\sub G$ and $B_n\sub V_n$.\\[2.4mm]
By \ref{collect},
there exist $C_n>0$, $m_n\in \N_0$ such that
$p_n(\pi(g,v))\leq p_n(v)C_ne^{m_n d(g)}$
for all $g\in G$ and $v\in E$. Then
$R_n:=\sup\{e^{m_nd(x)}\colon x\in A_n\}<\infty$.
Pick $\ell\in \N_0$ with $C:=\int_Ge^{-\ell d(y)}\,d\lambda_G(y)<\infty$
(see (\ref{secd})).\\[2.3mm]
For $i\in \N$, let $N_i$
be the maximum of $m_1+\ell,\ldots, m_i+\ell$.
%Define $r_i:=
%\sup\{e^{N_i d(x)}\colon x\in A_i\}<\infty$.
Pick
$\ve_i\in \,]0,2^{-i}[$ so small
that
$R_i C_iC \ve_i<2^{-i}$.
Set $P_i:=\{\gamma\in \cA_i(G)\colon
\|\gamma\|_{B_i,N_i}<\ve_i\}$.
Then $P:=\sum_{i\in \N}P_i$ is a $0$-neighbourhood in $\cA(G)$.\\[2.3mm]
For $j\in \N$,
let $q_j$ be the pointwise maximum of $p_1,\ldots,p_j$.
Let $H_j$ be the closure of $\{gyg^{-1}\colon g\in G,\, y\in B_j\}$
in $V_j$. Choose $\delta_j\in\,]0,2^{-j}[ $ so small
that $CC_jR_j\delta_j<2^{-j}$. Set $Q_j:=\{v\in E_j\colon \|v\|_{H_j,q_j}<\delta_j\}$.
Then $Q:=\sum_{j\in \N}Q_j$ is a $0$-neighbourhood in~$E^\omega$.\\[2.3mm]
We now verify that $\Pi(P\times Q)\sub W$,
entailing that the bilinear map $\Pi$ is continuous at $(0,0)$ and thus continuous.
To this end, let $\gamma\in P$, $v\in Q$.
Then $\gamma=\sum_{i=1}^\infty \gamma_i$ and
$v=\sum_{j=1}^\infty v_j$
with suitable
$\gamma_i\in P_i$
and $v_j\in Q_j$,
such that $\gamma_i\not=0$
for only finitely many $i$
and $v_j\not=0$
for only finitely many~$j$.
Abbreviate $w_{i,j}:=\Pi(\gamma_i,v_j)$.\\[2.3mm]
If $j<i$, then $w_{i,j}\in E_j$ by Lemma~\ref{intrnew}.
Moreover, (\ref{dochunew}) shows that
\begin{equation}\label{ca1ok}
\|w_{i,j}\|_{K_j,p_j}
\leq
C C_j R_j \|\gamma_i\|_{m_j+\ell}\|v_j\|_{H_j,p_j}
<C C_j R_j\ve_i\delta_j<2^{-i}2^{-j}.
\end{equation}
If $i\leq j$, then $w_{i,j}\in E_i$ by Lemma~\ref{intrep1},
and (\ref{dochuse}) implies that
\begin{equation}\label{ca2ok}
\|w_{i,j}\|_{K_i,p_i}\leq
R_i C_iC\|\gamma_i\|_{B_i,m_i+\ell}\,p_i(v_j)
\leq R_i C_iC \ve_i\delta_j<2^{-i}2^{-j}.
\end{equation}
For each $n\in \N$, we have $\sum_{\min\{i,j\}=n}w_{i,j}\in S_n$,
since (by (\ref{ca1ok}) and (\ref{ca2ok}))
\[
\left\|\sum_{\min\{i,j\}=n}w_{i,j}\right\|_{K_n,p_n}\leq \sum_{\min\{i,j\}=n}2^{-i}2^{-j} <1\,.
\]
Hence $\Pi(\gamma, v)
=\sum_{n=1}^\infty\sum_{\min\{i,j\}=n}w_{i,j}\in \sum_{n\in \N} S_n\sub W$, as required.
\section{Proof of Proposition~E}\label{secpropE}
We use a variant of \cite[Theorem~4]{Yam},
which does not require that the direct sequence is strict.
\begin{prop}\label{yamprop}
Let $G_1\sub G_2\sub\cdots$ be a sequence of metrizable
topological groups such that all inclusion maps $G_n\to G_{n+1}$ are continuous
homomorphisms. Let~$\cO_{DL}$ be the direct
limit topology
on $G:=\bigcup_{n\in \N}G_n$ and $\cO_{TG}$ the topology
making $G$ the direct limit
$\dl\, G_n$\vspace{-.3mm} as a topological group.
Assume:
\begin{itemize}
\item[\rm(a)]
For each $n\in \N$,
there is $m>n$ such that the set $G_n$ is not open in~$G_m$;
\item[\rm(b)]
There exists $n\in \N$ such that, for all identity neighbourhoods $U\sub G_n$
and $m>n$,
the closure of $U$ in $G_m$ is not compact; and:
\item[\rm(c)]
There exists a Hausdorff topology $\cT$ on~$G$ making
each inclusion map $G_n\to G$ continuous,
and such that
every sequentially compact subset
of $(G,\cT)$
is contained in some $G_n$ and compact in there.
\end{itemize}
Then $\cO_{DL}$ does not make the group multiplication
$G\times G\to G$ continuous and hence $\cO_{DL}\not=\cO_{TG}$.
\end{prop}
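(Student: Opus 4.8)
The strategy is to argue by contradiction: I would assume that $\cO_{DL}$ makes the multiplication $m\colon G\times G\to G$ continuous at the identity $(1,1)$, and then use hypotheses (a), (b), (c) to manufacture a sequence in $G$ that converges to~$1$ in the $\cO_{DL}$-topology but cannot be handled by any fixed step $G_m$, contradicting what continuity of multiplication would force. The reason a separate topology $\cT$ enters (hypothesis (c)) is exactly Yamasaki's insight: direct-limit topologies are hard to control directly, but sequential compactness detected by the auxiliary Hausdorff topology~$\cT$ lets one show that a putative convergent sequence must actually live inside a single step and be relatively compact there --- which hypothesis (b) forbids.

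\textbf{Construction of the bad sequence.} Fix the index~$n$ supplied by hypothesis (b). Suppose $m$ is continuous at $(1,1)$ for $\cO_{DL}$; then for the basic $\cO_{DL}$-neighbourhood $W$ of~$1$ there would be a $\cO_{DL}$-neighbourhood $U\times U$ with $U\cdot U\sub W$, and in particular $U$ is an identity neighbourhood in each~$G_k$. The idea is to exploit the failure of $G_n$ to be open in some $G_m$ (hypothesis (a)) together with the non-compactness of closures (hypothesis (b)) to pick, for each~$k$, a point $g_k$ lying in $G_{m_k}\setminus G_n$ for a suitable $m_k$, arranged so that $g_k\to 1$ in~$\cT$ but the $g_k$ ``escape to infinity'' across the steps. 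Concretely, because no identity neighbourhood $U\sub G_n$ has compact closure in the larger~$G_m$, I can choose the $g_k$ to avoid relative compactness in every single step. The crux is to show this sequence nonetheless converges to~$1$ for~$\cO_{DL}$: one builds a decreasing system of step-neighbourhoods and uses that $\sum_k S_k$ is a $\cO_{DL}$-neighbourhood (the standard box-sum description of the direct-limit topology, as used in the proofs of Propositions~C and~D above) to absorb the tail of the sequence.

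\textbf{Deriving the contradiction via $\cT$.} Once I have a sequence $(g_k)$ that is $\cO_{DL}$-null, I would consider products $g_k h_k$ or a Cauchy-type combination and apply continuity of~$m$: continuity at $(1,1)$ would force the product sequence into $W$, hence into a single step $G_N$ with bounded/compact behaviour there. To convert this into a violation of (b), I use hypothesis (c): the relevant sequence is sequentially compact in $(G,\cT)$ (as a convergent sequence together with its limit is $\cT$-compact, being the continuous image under the inclusions), so by~(c) it is contained in some $G_N$ and compact there. But the $g_k$ were chosen so that any identity neighbourhood containing them fails to have compact closure in the next step, contradicting this compactness. Therefore $m$ is not $\cO_{DL}$-continuous; since $\cO_{TG}$ is by definition the finest group topology below (or the topology making $m$ continuous), we get $\cO_{DL}\neq\cO_{TG}$.

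\textbf{Main obstacle.} The delicate point --- and the part where Yamasaki's original argument must genuinely be adapted, since here the sequence $G_1\sub G_2\sub\cdots$ is \emph{not} strict --- is step two: showing that the escaping sequence really is $\cO_{DL}$-null while simultaneously being forbidden from sitting compactly inside one step. In the strict case one can use that each $G_n$ is closed and topologically embedded in $G_{n+1}$ to localize neighbourhoods cleanly; without strictness I expect to need the ``sequentially compact regular'' hypothesis encoded in~(c) to play the role that strictness played for Yamasaki, namely to guarantee that $\cT$-sequential-compactness still pins a sequence down to one step. Balancing the simultaneous requirements ``$\cO_{DL}$-convergent to~$1$'' and ``not relatively compact in any $G_m$'' is where the real combinatorial care lies; the rest is bookkeeping with the box-sum neighbourhoods and the interplay of the three topologies $\cO_{DL}$, $\cO_{TG}$ and~$\cT$.
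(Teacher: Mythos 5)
Your proposal has a genuine gap, and the step you yourself flag as the ``crux'' is in fact impossible. Note first that $\cT\sub\cO_{DL}$, because $\cO_{DL}$ is the finest topology making all inclusions $G_n\to G$ continuous and $\cT$ is one such topology. Hence any sequence $g_k\to 1$ in $\cO_{DL}$ also converges in $\cT$, so $\{g_k\colon k\in\N\}\cup\{1\}$ is sequentially compact in $(G,\cT)$; by hypothesis (c) it is then contained in a single step $G_N$ and compact there. Thus no sequence can be simultaneously $\cO_{DL}$-null and ``escape to infinity across the steps'' or avoid relative compactness in every step---and this holds unconditionally, whether or not multiplication is continuous, since (c) is a hypothesis on $\cT$ alone. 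So your plan could only work if the assumed continuity of multiplication entered the construction of the sequence in an essential way, and your sketch gives no mechanism for that: the box-sum argument you invoke nowhere uses continuity of $m$. That argument is moreover invalid for $\cO_{DL}$: sets of the form $\sum_k S_k$ (or, for groups, unions of finite products of step-neighbourhoods) describe neighbourhood bases for $\cO_{lcx}$ resp.\ $\cO_{TG}$, not for $\cO_{DL}$, whose open sets are exactly those with open trace on each $G_n$ and which in general are strictly more numerous---indeed that discrepancy is the whole point of the proposition. Showing that a sequence eventually enters every box sum would at best give $\cO_{TG}$-convergence, which is useless here.

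What is missing is the actual mechanism of the paper's proof (and of Yamasaki's): one does not produce an $\cO_{DL}$-convergent sequence; one produces an explicit $\cO_{DL}$-open identity neighbourhood $U$ that defeats every candidate $W$ with $WW\sub U$. After passing to a subsequence one may assume, by (a) and (b), that each $G_{n-1}$ is non-open in $G_n$ and that no identity neighbourhood of any $G_n$ has compact closure in any later step. Metrizability yields $x_{n,k}\in G_n\setminus G_{n-1}$ with $x_{n,k}\to 1$ in $G_n$; by (b) and (c) the closure $\wb{V_{n-1}}$ (in $G_n$) of a basic identity neighbourhood $V_{n-1}\sub G_1$ is not sequentially compact in $(G,\cT)$, so it contains a sequence without convergent subsequence in any $G_m$, which one perturbs (metrizability again) to points $z_{n,k}\in V_{n-1}$. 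The products $z_{n,k}x_{n,k}$ then have no convergent subsequence in any step, so $A:=\bigcup_{n\geq 2}\{z_{n,k}x_{n,k}\colon k\in\N\}$ has closed trace on every $G_m$, i.e.\ is $\cO_{DL}$-closed, and $U:=G\setminus A$ is $\cO_{DL}$-open and contains $1$; yet any identity neighbourhood $W$ contains some $V_{n-1}$ in $G_1$ and some $x_{n,k_0}$ in $G_n$, whence $z_{n,k_0}x_{n,k_0}\in(W\cap G_1)(W\cap G_n)\setminus U$ and so $WW\not\sub U$. Observe that (c) is used there to manufacture \emph{non-convergent} sequences (guaranteeing that $A$ is closed), essentially the opposite of the role you assign to it.
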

\begin{rem}
By definition, a set $M\sub G$ is open (resp., closed)
in $(G,\cO_{DL})$ if and only if $M\cap G_n$ is open
(resp., closed) in $G_n$ for each $n\in \N$.
By contrast, $\cO_{TG}$ is defined as the finest
among the topologies on~$G$ making
$G$ a topological group and each inclusion map $G_n\to G$
continuous. 
See \cite{JFA}, \cite{GHK}, \cite{Hir}, \cite{Yam}
for comparative discussions of $\cO_{DL}$
and $\cO_{TG}$.
\end{rem}
{\bf Proof of Proposition~\ref{yamprop}.}
If $G_n$ is not open in~$G_m$ with $m>n$,
then $G_n$ also fails to be open
in $G_k$ for all $k>m$. In fact, let $i_{m,k}\colon G_m\to G_k$
be the continuous inclusion map. If $G_n$ was open in~$G_k$,
then $G_n= i_{m,k}^{-1}(G_n)$ would also be open in~$G_m$,
contradiction.
Similarly, if $n$ is as in (b) and $k>n$, then also $G_k$
does not have an identity neighbourhood which has compact closure in
$G_\ell$ for some $\ell>k$. In fact, if $U$ was such a neighbourhood,
then $i_{k,n}^{-1}(U)$ would be an identity neighbourhood in~$G_n$
whose closure in $G_\ell$ is contained in $\wb{U}$ and hence compact,
contradiction.
After passing to a subsequence, we may hence assume that
$G_n$ is not open in $G_{n+1}$ (and hence not an identity neighbourhood),
for each $n\in \N$.
And we can assume that, for each $n\in \N$
and identity neighbourhood $U\sub G_n$,
for each $m>n$ the closure of $U$ in~$G_m$ is not compact.\\[2.5mm]
If $\cO_{DL}$ makes the group multiplication continuous,
then for every identity neighbourhood $U\sub (G,\cO_{DL})$,
there exists an identity neighbourhood $W\sub (G,\cO_{DL})$
such that $WW\sub U$. Then
\begin{equation}\label{letfail}
(\forall n\in \N) \quad (W\cap G_1)(W\cap G_n) \sub U\cap G_n.
\end{equation}
Thus, assuming (a)--(c), $\cO_{DL}$ will not be a group topology
if we can construct an identity neighbourhood $U\sub (G,\cO_{DL})$
such that (\ref{letfail}) fails for each~$W$.\\[2.4mm]
To achieve this, let
$d_n$ be a metric on~$G_n$ defining its topology, for $n\in \N$.
Let $V_1\supseteq V_2\supseteq \cdots$ be a basis
of identity neighbourhoods in~$G_1$.\\[2.4mm]
Since $G_n$ is metrizable and $G_{n-1}$ not an identity neighbourhood in~$G_n$,
for each $n\geq 2$ we find a sequence $(x_{n,k})_{k\in \N}$
in $G_n\setminus G_{n-1}$ such that $x_{n,k}\to 1$ in~$G_n$ as $k\to\infty$.
Let $K:=\wb{V_{n-1}}$ be the closure of $V_{n-1}$ in~$G_n$.
Then~$K$ cannot be sequentially compact in~$G$,
as otherwise~$K$ would be compact in~$G_m$ for some
$m\in \N$ (by (c)), contradicting~(b).
Hence $K$ contains a sequence $(w_{n,k})_{k\in \N}$
which does not have a convergent subsequence in~$G$,
and hence does not have a convergent subsequence in~$G_m$
for any $m\geq n$. Pick $z_{n,k}\in V_{n-1}$ such that
\begin{equation}\label{metrapp}
d_n(w_{n,k},z_{n,k})<\frac{1}{k}.
\end{equation}
Then also $(z_{n,k})_{k\in \N}$
does not have a convergent subsequence in $G_m$ for any $m\geq n$
(if $z_{n,k_\ell}$ was convergent,
then $w_{n,k_\ell}$ would converge to the same limit, by (\ref{metrapp})).
Moreover, $(z_{n,k}x_{n,k})_{k\in \N}$
does not have a convergent subsequence $(z_{n,k_\ell}x_{n,k_\ell})_{\ell\in\N}$
in $G_m$ for any $m\geq n$ (because then $z_{n,k_\ell}=(z_{n,k_\ell}x_{n,k_\ell})x_{n,k_\ell}^{-1}$
would converge (contradiction).\\[2.4mm]
As a consequence, the set $C_n:=\{z_{n,k}x_{n,k}\colon k\in \N\}$
is closed in~$G_m$ for each $m\geq n$.
Also note that $z_{n,k}x_{n,k}\in G_n\setminus G_{n-1}$
and thus $C_n\sub G_n\setminus G_{n-1}$.
Hence $A_n:=\bigcup_{\nu=2}^n C_\nu$ is a closed subset of~$G_n$ for each $n\geq 2$,
and $A:=\bigcup_{n\geq 2}A_n$ is closed in $(G,\cO_{DL})$
because $A\cap G_n=A_n$ is closed for each $n\geq 2$.
Thus $U:=G\setminus A$ is open in $(G,\cO_{DL})$,
and $U\cap G_n=G_n\setminus A_n$.
We show that $WW\not\sub U$ for any $0$-neighbourhood
$W\sub G$. In fact, there is $n\geq 2$ such that $V_{n-1}\sub W\cap G_1$.
Since $x_{n,k}\to 0$ in $G_n$ as $k\to\infty$, there is $k_0\in \N$ such that
$x_{n,k}\in W\cap G_n$ for all $k\geq k_0$.
Also, $z_{n,k_0}\in V_{n-1}$. Hence
$z_{n,k_0}x_{n,k_0}\in (W\cap G_1)(W\cap G_n)$.
But $z_{n,k_0}x_{n,k_0}\in A_n$ and thus $z_{n,k_0}x_{n,k_0}\not\in U\cap G_n$.
As a consequence, $WW\not\sub U$.\,\vspace{2.4mm}\Punkt

\noindent
Because the locally convex direct limit topology $\cO_{lcx}$
on an ascending union of locally convex spaces
coincides with $\cO_{TG}$ \cite[Proposition~3.1]{Hir}, we obtain:
\begin{cor}\label{yamcor}
Let $E_1\sub E_2\sub\cdots$ be metrizable
locally convex spaces such that all inclusion maps $E_n\to E_{n+1}$ are continuous
linear. On $E:=\bigcup_{n\in \N}E_n$, let $\cO_{DL}$ be the direct limit topology
and $\cO_{lcx}$ be the locally convex direct limit topology.
Then $\cO_{DL}\not=\cO_{lcx}$ if {\rm(a)--(c)} are satisfied:
\begin{itemize}
\item[\rm(a)]
For each $n\in \N$,
there exists $m>n$ such that $E_m\setminus E_n\not=\emptyset$;
\item[\rm(b)]
There exists $n\in \N$ such that, for each $0$-neighbourhood $U\sub E_n$
and $m>n$, the closure of $U$ in $E_m$ is not compact;
\item[\rm(c)]
$\cO_{lcx}$ is Hausdorff and every sequentially compact subset of $(E,\cO_{lcx})$
is contained in some~$E_n$ and compact in there.\,\Punkt
\end{itemize}
\end{cor}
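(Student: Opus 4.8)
The plan is to apply Proposition~\ref{yamprop} directly to the sequence of additive groups $(E_n,+)$. Each $E_n$, being a metrizable locally convex space, is in particular a metrizable topological group under addition, and the continuous linear inclusions $E_n\to E_{n+1}$ are continuous group homomorphisms. The decisive structural input is that the group-theoretic direct limit topology $\cO_{TG}$ on $G:=E=\bigcup_{n\in\N}E_n$ coincides with the locally convex direct limit topology $\cO_{lcx}$, by \cite[Proposition~3.1]{Hir}. Thus it suffices to check that hypotheses (a)--(c) of the corollary imply the corresponding hypotheses of Proposition~\ref{yamprop}; the proposition will then yield $\cO_{DL}\not=\cO_{TG}=\cO_{lcx}$.

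For hypothesis~(a), I would invoke the elementary fact that a proper vector subspace of a topological vector space is never open: if such a subspace contained a $0$-neighbourhood, then, as $0$-neighbourhoods are absorbing, it would be the whole space. Hence, whenever $E_m\setminus E_n\not=\emptyset$ (so that $E_n\subsetneq E_m$), the subgroup $E_n$ fails to be open in $E_m$, which is exactly condition~(a) of the proposition. This is the single place where the linear structure is genuinely used, and the only point at which condition~(a) of the corollary differs in form from condition~(a) of the proposition.

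Hypothesis~(b) transfers verbatim, since in the additive group the identity element is~$0$ and the identity neighbourhoods are precisely the $0$-neighbourhoods. For hypothesis~(c), I would take $\cT:=\cO_{lcx}$. By the very definition of the locally convex direct limit topology, each inclusion $E_n\to(E,\cO_{lcx})$ is continuous, and by the corollary's standing hypothesis $(E,\cO_{lcx})$ is Hausdorff with the property that every sequentially compact subset lies in some~$E_n$ and is compact there. Thus $\cT=\cO_{lcx}$ witnesses condition~(c) of the proposition.

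With (a)--(c) of Proposition~\ref{yamprop} verified, the proposition gives $\cO_{DL}\not=\cO_{TG}$, and since $\cO_{TG}=\cO_{lcx}$ we conclude $\cO_{DL}\not=\cO_{lcx}$, as claimed. The entire argument is a routine translation from the linear to the group setting; I do not expect any real obstacle beyond the elementary observation about non-openness of proper subspaces used in the verification of~(a).
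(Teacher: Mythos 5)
Your proposal is correct and follows essentially the same route as the paper: the paper likewise deduces the corollary from Proposition~\ref{yamprop} by identifying $\cO_{TG}$ with $\cO_{lcx}$ via \cite[Proposition~3.1]{Hir}, treating the remaining verifications as immediate. Your explicit checks---notably that a proper vector subspace cannot be open because $0$-neighbourhoods are absorbing, and that $\cT:=\cO_{lcx}$ witnesses condition~(c)---are exactly the details the paper leaves to the reader.
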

It is convenient to make the special choice of the $V_n$ proposed in~\cite{GKS}
now. To this end, extend ${\bf g}$ to a left-invariant Riemannian metric
on $G_\C$, write\linebreak
${\bf d}_\C\colon G_\C\times G_\C\to[0,\infty[$
for the associated distance function,
and set $d_\C(z):={\bf d}_\C(z,1)$ for $z\in G_\C$.
For $\rho>0$, let
\[
B_\rho:=\{z\in G_\C\colon d_\C(z)<\rho\}
\]
be the respective open ball around~$1$. Then the sets
$V_n:=B_{1/n}$, for $n\in \N$ have properties as described in the introduction.
Notably, $\wb{B_\rho}$ is compact for each $\rho>0$,
and hence also each $\wb{V_n}$ (see \cite[p.\,74]{Gar}).
\begin{la}\label{notstat}
Let $G$ be a connected Lie group with $G\sub G_\C$
and $G\not=\{1\}$.
Then the sequence $\cA_1(G)\sub \cA_2(G)\sub\cdots$
does not become stationary.
\end{la}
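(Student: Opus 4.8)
The plan is to show that for every $N\in\N$ the inclusion $\cA_N(G)\subseteq\cA_{N+1}(G)$ is proper; since this holds for all $N$, the sequence cannot become stationary. With the choice $V_n=B_{1/n}$, left invariance of $d_\C$ identifies the tube $GV_n$ with $\{z\in G_\C\colon\inf_{g\in G}{\bf d}_\C(z,g)<1/n\}$, and $\cA_n(G)$ consists exactly of the superdecreasing $\gamma\in\cR(G)$ whose orbit map extends $\C$-analytically to this $1/n$-tube with all norms $\|\gamma\|_{K,N}$ finite. So it suffices to produce, for each $N$, a superdecreasing analytic function whose orbit map extends to the $1/(N+1)$-tube but whose \emph{maximal} $\C$-analytic extension has a genuine singularity at $d_\C$-distance $r$ from $G$ with $1/(N+1)<r<1/N$: such a function lies in $\cA_{N+1}(G)\setminus\cA_N(G)$.

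The model is $G=\R$, which guides the whole argument. Here $G_\C=\C$, the tube $GV_n$ is the strip $\{|\Impart w|<1/n\}$, and for $r>0$ the function $\gamma(x)=e^{-x^2}/(x^2+r^2)$ does the job: its extension $\eta(w)=e^{-w^2}/(w^2+r^2)$ is holomorphic precisely on $\{|\Impart w|<r\}$, with poles at $w=\pm ir$ at distance $r$ from $\R$. The Gaussian factor forces superdecay, and for compact $K\subseteq V_n$ with $n>1/r$ one checks $\|\gamma\|_{K,N}<\infty$, because for $w=g-z$ ($g\in\R$, $z\in K$) one has $|\eta(w)|\le\mathrm{const}\cdot e^{(\Impart z)^2-(g-\Repart z)^2}$, and the quadratic $(g-\Repart z)^2$ dominates $e^{Nd(g)}=e^{N|g|}$. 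Choosing $1/(N+1)<r<1/N$ gives $\gamma\in\cA_{N+1}(\R)\setminus\cA_N(\R)$.

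For general $G$ I would transplant this picture to a one-parameter subgroup. Since $G\neq\{1\}$ is connected, pick $X\in\cg=L(G)$ with $|X|_{\bf g}=1$; then $w\mapsto\exp_\C(wX)$ is a holomorphic homomorphism $\C\to G_\C$ whose restriction is the one-parameter subgroup $t\mapsto\exp(tX)$, and $d_\C(\exp_\C(\pm irX))\asymp r$ for small $r$. The superdecaying factor is supplied abstractly: fix a nonzero $H\in\cA(G)$ (so $\widetilde H$ is $\C$-analytic and superdecreasing on some tube $GV_{m_0}$) and seek $\gamma=(H\cdot S)|_G$, where $S$ contributes a singularity at distance $r$ while the superdecay of $\widetilde H$ absorbs the weights $e^{Nd}$ in $\|\cdot\|_{K,N}$. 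In the $\exp_\C$-chart near $1$ the natural choice is $S=1/(u^2+r^2)$ with $u$ a holomorphic coordinate along $X$, so that $\{u=\pm ir\}$ sits at imaginary distance $\approx r$; one then selects $r\in(1/(N+1),1/N)$ and reads off $\gamma\in\cA_{N+1}(G)\setminus\cA_N(G)$.

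The main obstacle is \emph{globalizing} the singular factor $S$: an analytic function cannot be cut off, yet the $\exp_\C$-chart (hence $u$) is only local, whereas membership in $\cA_n(G)$ requires $\widetilde\gamma$ to be $\C$-analytic on the \emph{entire} tube $GV_n$ with norms $\|\gamma\|_{K,N}$ taken over all $g\in G$. I would resolve this by replacing $u$ with a globally defined $\C$-analytic function on $G_\C$ (or on a tube) with $du_1(X)\neq0$ — for instance a matrix coefficient when $G_\C$ is linear — so that $\widetilde H\cdot(u^2+r^2)^{-1}$ is genuinely holomorphic on $GV_m\setminus\{u=\pm ir\}$. The remaining work is the quantitative estimate that the singular locus $\{u=\pm ir\}$ lies at $d_\C$-distance within $(1/(N+1),1/N)$ of $G$, uniformly along $G$, and that the superdecay of $\widetilde H$ keeps the product's tube-norms finite. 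Controlling this distance uniformly, and securing such a global holomorphic function in full generality, is the delicate point; the $\R$-computation shows precisely which estimate must survive the transplant.
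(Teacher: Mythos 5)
Your model computation for $G=\R$ is essentially correct, but for general $G$ the proposal has genuine gaps, which you partly acknowledge but do not resolve, and they are not routine. First, you ``fix a nonzero $H\in\cA(G)$'' as the source of superdecay; the existence of such an $H$ (indeed of a suitable non-constant one) is itself a substantial part of what must be proved, and in the paper's proof it occupies Steps 1 and 2: for non-compact $G$ it requires the regularized distance function of \cite[Lemma~4.3]{Kro} to build the non-constant function $e^{-\theta^2}\in\wt{\cA}_m(G)$, and for compact $G$ an embedding argument. Second, and more seriously, your singular factor $S=1/(u^2+r^2)$ requires a globally defined holomorphic function $u$ on a tube $GV_m$ whose polar set $\{u=\pm ir\}$ lies at $d_\C$-distance from $G$ uniformly pinched between $1/(N+1)$ and $1/N$. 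Nothing guarantees this: that polar set is a complex hypersurface which can re-enter the tube far away along $G$ (so the distance is not controlled uniformly), one-parameter subgroups can wind densely in a torus, and ``matrix coefficients'' presuppose a linear $G_\C$. There is also a third, unacknowledged, issue: even granted $u$ and $H$, the product $\wt{H}\cdot S$ need not actually be singular along $\{u=\pm ir\}$ --- if $\wt{H}$ vanishes on that hypersurface, the product may extend analytically across it, so $\gamma\notin\cA_N(G)$ is not secured.

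It is worth seeing how the paper sidesteps exactly these difficulties: it does not transplant a coordinate singularity at all. Starting from a non-constant $\wt{\gamma}\in\wt{\cA}_m(G)$, it picks a value $b=\wt{\gamma}(z_0)$ with $|b|>a:=\max_G|\gamma|$ (possible since $\wt\gamma$ is an open map) and forms $\theta=\wt{\gamma}/(\wt{\gamma}-b)$. The pole set is then a level set of $\wt{\gamma}$ itself; compactness of $\{(v,g)\in\wb{V_n}\times G\colon\wt{\gamma}(vg)=b\}$ and minimality of $\rho=d_\C(v_0)$ keep it at a definite positive distance from $G$, and the decay estimate $|\wt{\gamma}(z^{-1}g)|\le a<|b|$ for $g$ outside a compact subset of $G$ (built into the norms $\|\cdot\|_{K,N}$) makes the tube norms of $\theta$ finite. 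This yields an element of $\cA_k(G)\setminus\cA_n(G)$ for \emph{some} $k>n$, which is all that non-stationarity requires, without ever pinning the pole distance inside a prescribed interval $(1/(N+1),1/N)$ --- which is precisely the quantitative control at which your approach gets stuck.
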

\begin{proof}
Step~1. If $G$ is compact, then
$G$ is isomorphic to a closed subgroup of some
unitary group.
Hence~$G$ can be realized as a closed $\R$-analytic submanifold of some~$\R^k$
(which is also clear from \cite[Theorem 3]{Gra}),
entailing that $\R$-analytic functions (like restrictions of linear functionals)
separate points on~$G$. In particular, there exists a
non-constant $\R$-analytic function $\gamma\colon G\to\R$,
and the latter then extends to a $\C$-analytic function $\wt{\gamma}$
on some neighbourhood of $G$ in $G_\C$,
which (since $G$ is compact) can be assumed of the form $GV_m$ for some $m\in \N$.
Then $\wt{\gamma}\in \wt{A}_m(G)$
(noting that $d$ is bounded).\\[2.4mm]
Step 2. If $G$ is not compact,
we recall the ``regularized distance function'':
There exists $m\in \N$ and a $\C$-analytic
function $\wt{d}\colon GV_m\to \C$
such that
\[
C:=\sup\{|\wt{d}(gz)-d(g)|\colon g\in G, z\in V_m\}\;<\;\infty
\]
(see \cite[Lemma~4.3]{Kro}).
Then also $\theta\colon V_mG\to \C$, $\theta(z):=\wt{d}(z^{-1})$
is $\C$-analytic, and $|\theta(zg)-d(g)|=|\wt{d}(g^{-1}z^{-1})-d(g^{-1})|\leq C$
for all $z\in V_m$ and $g\in G$.
Since $\{x\in G\colon d(g)\leq R\}$ is compact for each $R>0$
(see \cite{Gar}), for each $R>0$ there exists $g\in G$
such that $d(g)>R$ and thus $|\theta(g)|>R-C$.
Hence $\theta$ is not constant,
and hence also $\theta^2$ and $\Repart(\theta^2)$
are not constant.
If $N\in \N_0$, there is $r_N>0$ such that
\[
a^2-2aC-C^2\geq Na\quad\mbox{for all $\,a\geq r_N$.}
\]
Since $\theta(zg)^2=d(g)^2+2(\theta(zg)-d(g))d(g)+(\theta(zg)-d(g))^2$,
we deduce that
\[
\Repart(\theta(zg)^2)\geq
d(g)^2-2Cd(g)-C^2\geq Nd(g)
\]
for all $z\in V_m$ and all $g\in G$ such that $d(g)\geq r_N$.
We also have
\[
\Repart(\theta(zg)^2)\geq -|\theta(zg)^2|\geq -(d(g)+C)^2\quad\mbox{for all
$z\in V_m$, $g\in G$.}
\]
Thus $\gamma\colon V_mG\to\C$, $\gamma(z):=e^{-\theta(z)^2}$
is non-constant, $\C$-analytic and
\[
|\gamma(zg)|e^{Nd(g)}
=e^{-\Repart(\theta(zg)^2)}e^{Nd(g)}\; \leq \; e^{(r_N+C)^2+Nr_N}
\]
for all $z\in V_m$, $g\in G$.
Hence $\|\gamma\|_{K,N}<\infty$ for each compact set $K\sub V_m$
and $N\in \N_0$. Thus $\gamma\in \wt{A}_m(G)$,
and hence $\wt{A}_n(G)\not=\{0\}$ for all $n\geq m$.\\[2.4mm]
Step~3. In either case, let $\wt{\gamma}\in \wt{\cA}_m(G)$ be a non-constant
function, and $n>m$. Then also $\gamma:=\wt{\gamma}|_G$ is non-constant (as $G$ is totally real in~$G_\C$).
If $G$ is compact, then $|\gamma|$ attains a maximum $a>0$.
If $G$ is non-compact, then $\gamma$ vanishes at infinity.
Hence $|\gamma(G)|\cup\{0\}$ is compact and hence $|\gamma|$ attains
a maximum $a>0$. In either case, because $\wt{\gamma}$ is an open map,
there exists $z_0\in V_nG$ such that $|\wt{\gamma}(z_0)|>a$.
Set $b:=\wt{\gamma}(z_0)$.
The set $K:=\{(v,g)\in \wb{V_n}\times G\colon \wt{\gamma}(vg)= b\}$
is compact. After replacing $z_0$ with $v_0g_0$ for suitable $(v_0,g_0)\in K$,
we may assume that $z_0$ is of the form $v_0g_0$
with $\rho:=d_\C(v_0)=\min\{d_\C(v)\colon (v,g)\in K\}>0$.
Then $W:=\{z\in V_nG\colon \wt{\gamma}(z)\not=b\}$
is an open subset of $V_nG$ such that $G\sub W$,
and
\[
\theta\colon W\to \C,\quad \theta(z):=\frac{\wt{\gamma}(z)}{\wt{\gamma}(z)-b}
\]
is a $\C$-analytic function. Set $B_\rho:=\{z\in G_\C\colon d_\C(z)<\rho\}$.
Then $B_\rho G\sub W$,
by minimality of $d_\C(v_0)$. Also $\rho<\frac{1}{n}$
(as $z_0\in V_nG$). Let $k\in \N$ such that $\frac{1}{k}<\rho$
(and thus $k>n$). Then $V_kG\sub W$.
We show that $\eta:=\theta|_G\in \cA_k(G)$ but $\eta\not\in \cA_n(G)$.
Let $K\sub V_k$ be compact. Since $|\wt{\gamma}(z^{-1}g)|\leq \|\wt{\gamma}\|_{K,1}e^{-d(g)}$
and $d(g)\to\infty$ as $g\to\infty$, there exists a compact subset $L\sub G$
such that
\[
(\forall z\in K, g\in G\setminus L)\quad |\wt{\gamma}(z^{-1}g)|\;\leq \; a.
\]
Hence $|\wt{\gamma}(z^{-1}g)-b|\geq |b|-|\wt{\gamma}(z^{-1}g)|\geq |b|-a>0$
and thus, for each $N\in \N_0$,
\[
|\theta(z^{-1}g)|e^{Nd(g)}
\,\leq\, \frac{|\wt{\gamma}(z^{-1}g)|e^{Nd(g)}}{|b|-a}\,\leq\, \frac{\|\wt{\gamma}\|_{K,N}}{|b|-a}
\]
for all $z\in K$ and $g\in G\setminus L$. Since
$|\theta(z^{-1}g)|^{Nd(g)}$ is bounded for $(z,g)$ in the compact set $K\times L$,
we deduce that $\|\theta\|_{K,N}<\infty$.
Hence $\wt{\eta}:=\theta|_{V_kG}\in \cA_k(G)$ and
$\eta:=\wt{\eta}|_G\in \cA_k(G)$.\\[2.4mm]
$\eta\not\in \cA_n(G)$: If $\eta$ was in $\cA_n(G)$, we could find
$\sigma\in \wt{\cA}_n(G)$ with $\sigma|_G=\eta$.
Then $\theta|_{B_\rho G}=\sigma|_{B_\rho G}$,
as $B_\rho G$ is a connected open set in~$G_\C$,
and $\theta$ coincides with $\sigma$ on the
totally real submanifold $G$ of $B_\rho G$.
Given $\ve\in \,]0,\rho[$, let $c_\ve\colon [0,1]\to G_\C$ be a piecewise $C^1$-path
with $c_\ve(0)=1$ and $c_\ve(1)=v_0$,
of length $< d_\C(v_0)+\ve=\rho+\ve$.
Let $t_\ve\in [0,1]$ such that $c_\ve|_{[t_\ve,1]}$ has length~$\ve$.
Then
\begin{equation}\label{henceconv}
{\bf d}_\C(c_\ve(t_\ve),v_0)
\, =\, {\bf d}_\C(c_\ve(t_\ve),c_\ve(1))\,\leq\, \ve\,.
\end{equation}
Likewise, $d_\C(c_\ve(t_\ve))={\bf d}_\C(c_\ve(t_\ve),1)$ is bounded by the
length of $c_\ve|_{[0,t_\ve]}$ and hence $<\rho+\ve-\ve=\rho$.
Hence $c_\ve(t_\ve)\in B_\rho$ and thus
\[
\theta(c_\ve(t_\ve)g_0)=\sigma(c_\ve(t_\ve)g_0)\to \sigma(v_0g_0)=\sigma(z_0)
\]
as $\ve\to 0$ (noting that $c_\ve(t_\ve)\to v_0$ by (\ref{henceconv})).
But $|\theta(z)|\to\infty$ as $z\in W$ tends to~$z_0$, contradiction.
\end{proof}
{\bf Proof of Proposition~E.}
Each step $H_n:=\cA_n(G)\times E$ is
metrizable. For each $n\in \N$, there is $m>n$ such that
$H_n\not=H_m$ as a set (by Lemma~\ref{notstat}).
Hence condition~(a) in Corollary~\ref{yamcor} is satisfied.
Also (b) is satisfied:
Given $n$ and a $0$-neighbourhood $U\sub H_n$,
we cannot find $m\geq n$ such that
the closure $\wb{U}$ of $U$ in $H_m$ is
compact, because $(\{0\}\times E)\cap \wb{U}$
would be a compact $0$-neighbourhood in $\{0\}\times E\isom E$
then and thus~$E$ finite-dimensional (contradiction).
To verify (c), let $K\sub \cA(G)\times E$
be a sequentially compact set (with respect to the
locally convex direct limit topology).
Then the projections $K_1$ and $K_2$ of~$K$ to the factors
$\cA(G)$ and $E$, respectively, are sequentially compact sets.
Since $E$ is metrizable, $K_2\sub E$ is compact.
Now, the sequentially compact set $K_1\sub \cA(G)$ is bounded (Lemma~\ref{boundd}).
Because the locally convex direct limit $\cA(G)=\dl\,\cA_n(G)$
is regular \cite[Theorem~B.1]{GKS},
it follows that $K_1\sub \cA_n(G)$
for some $n\in \N$, and $K_1$ is bounded
in $\cA_n(G)$.
As $\cA_n(G)$ is a Montel space (Lemma~\ref{Montel}),
it follows that $K_1$ has compact closure $\wb{K_1}$ in $\cA_n(G)$.
Now $K$ is a sequentially compact subset of the compact
metrizable set $\wb{K_1}\times K_2\sub \cA(G)\times E$,
hence compact in the induced topology.
As $\cA_n(G)\times E$ and $\cA(G)\times E$
induce the same topology on the compact set $\wb{K_1}\times K_2$,
it follows that~$K$
is also compact in $\cA_n(G)\times E$.
Thus all conditions of Corollary~\ref{yamcor}
are satisfied, and thus $\cO_{DL}\not=\cO_{lcx}$.\,\Punkt
\section{{\boldmath$(\cA(G),\cdot)$} as a topological algebra}\label{pointmult}
If $n,m\in \N$, $\gamma\in \cA_n(G)$ and $\eta\in \cA_m(G)$,
then the pointwise product $\wt{\gamma}\cdot\wt{\eta}$
of the complex analytic extensions is defined on $V_kG$
with $k:=n\vee m:=\max\{n,m\}$. If $K\sub V_k$ is a compact subset
and $N,M\in \N_0$,
then $|\wt{\gamma}\wt{\eta}(z^{-1}g)|e^{(N+M)d(g)}=|\wt{\gamma}(z^{-1}g)|e^{Nd(g)}|\wt{\eta}(z^{-1}g)|e^{Md(g)}$
for all $z\in K$, $g\in G$ and thus
\begin{equation}\label{split}
\|\wt{\gamma}\cdot\wt{\eta}\|_{K,N+M}\leq \|\wt{\gamma}\|_{K,N}\|\wt{\eta}\|_{K,M}\;<\;\infty,
\end{equation}
whence $\wt{\gamma}\cdot\wt{\eta}\in \wt{\cA}_k(G)$
and hence $\gamma\cdot\eta\in \cA_k(G)$.
Thus pointwise multiplication makes $\cA(G)$ an algebra.\\[2.4mm]
To see that the multiplication is continuous at $(0,0)$,
let $W\sub \cA(G)$ be a $0$-neighbourhood.
There are $0$-neighbourhoods $W_n\sub \cA_n(G)$
such that $\sum_{n\in \N} W_n\sub W$.
We have to find $0$-neighbourhoods $Q_n\sub \cA_n(G)$
such that
\[
\sum_{(n,m)\in \N^2}Q_n\cdot Q_m\; \sub\;  W\,.
\]
This will be the case if we can achieve that
\begin{equation}\label{strategy}
(\forall k \in \N)\;\; \sum_{n\vee m=k}Q_n\cdot Q_m\;\sub\; W_k.
\end{equation}
We may assume that $W_n=\{\gamma\in \cA_n(G)\colon \|\gamma\|_{K_n,N_n}<\ve_n\}$
for some compact subset $K_n\sub V_n$, $N_n\in \N_0$
and $\ve_n\in\,]0,1]$.
After replacing $K_n$ with $K_n\cup\wb{V_{n+1}}$,
we may assume that $K_n\supseteq V_{n+1}$
and thus $K_n\supseteq K_{n+1}$, for each $n\in \N$. Thus
\[
K_1\supseteq K_2\supseteq K_3\supseteq\cdots\,.
\]
Then the $0$-neighbourhoods
\[
Q_n:=\Big\{\gamma\in \cA_n(G)\colon \|\gamma\|_{K_n,N_n}<\frac{\ve_n}{n^2}\Big\}
\]
satisfy (\ref{strategy}). To see this, let $k\in \N$
and $(n,m)\in \N^2$ such that
$n\vee m =k$. If $n=k$, using (\ref{split}) and $K_n\sub K_m$ we estimate
\begin{eqnarray*}
\|\gamma\cdot\eta\|_{K_k,N_k} &=&
\|\gamma\cdot\eta\|_{K_n,N_n}\, =\, \|\gamma\|_{K_n,N_n}\|\eta\|_{K_n,0}\, \leq\,
\|\gamma\|_{K_n,N_n}\|\eta\|_{K_m,0}\\
&<& \frac{\ve_n\ve_m}{n^2m^2}
\, \leq \, \frac{\ve_n}{n^2}=\frac{\ve_k}{k^2}.
\end{eqnarray*}
Likewise,
$\|\gamma\cdot\eta\|_{K_k,N_k}\leq \|\gamma\|_{K_n,0}\|\eta\|_{K_m,N_m}<\frac{\ve_k}{k^2}$
if $m=k$.
Since there are at most $k^2$ pairs $(n,m)$ with $n\vee m=k$,
for all choices of $\gamma_{n,m}\in Q_n$, $\eta_{n,m}\in Q_m$ the triangle inequality yields
\[
\Big\|\sum_{n\vee m =k}\gamma_{n,m}\cdot\eta_{n,m}\Big\|_{K_k,N_k}
\; < \; k^2\, \frac{\ve_k}{k^2}
\]
and thus $\sum_{n\vee m =k}\gamma_{n,m}\cdot\eta_{n,m}\in W_k$, verifying~(\ref{strategy}).
\appendix
\section{Proofs of the lemmas in Sections \ref{secprel} and \ref{preC}}\label{appA}
{\bf Proof of Lemma~\ref{specsit}.}
For fixed $y\in U_2$, the map
$s\colon U_1\times U_1\to F$,
$s(x_1,x_2):=g(x_1,y)\pi(y,h(x_2))$ is $C^{1,0}$ and $C^{0,1}$ and hence~$C^1$.
By linearity, $ds((x_1,x_2),.)$ is the sum of the partial differentials
and hence given by
\[
ds((x_1,x_2),\!(u_1,u_2))\!=\!d^{(1,0)}\!g(x_1,y,u_1)\pi(y,h(x_2))+
g(x_1,y)d^{(0,1)}\!\pi(y,dh(x_2,u_2))
\]
for all $x_1,x_2\in U_2$ and $u_1,u_2\in E_1$.
Thus $d^{(1,0)}f(x,y,u)$ exists for all\linebreak
$(x,y,u)\in U_1\times U_2\times E_1$ and
is given by
\[
d^{(1,0)}f(x,y,u)=g_1((x,u),y)\pi(y,h(x))+g(x,y)\pi(y,dh(x,u))
\]
with $g_1((x,u),y):=d^{(1,0)}g(x,y,u)$. Set $f_1((x,u),y):=d^{(1,0)}f(x,y,u)$.
By induction, $f_1\colon (U_1\times E_1)\times U_2 \to F$
is $C^{k,0}$, whence
$d^{(j+1,0)}f(x,y,u,u_1,\ldots, u_j)=$\linebreak
$d^{(j,0)}f_1((x,u),y,(u_1,0),\ldots,(u_j,0))$
exists for all $j\in \N_0$ with $j\leq k$ and $u_1,\ldots, u_j\in E_1$,
and is continuous in $(x,y,u,u_1,\ldots, u_j)$. Thus $f$ is $C^{k+1,0}$.\,\vspace{2.7mm}\Punkt

\noindent
Direct sums of locally convex spaces are always endowed with the locally convex direct sum topology
in this article (as in \cite{Bou}; see also~\cite{Jar}).
To enable the proof of Lemma~\ref{lemB1}, we shall need the following fact.
\begin{la}\label{embdisum}
Let $E$ be a locally convex space, $r\in \N_0\cup\{\infty\}$,
$M$ be a paracompact, finite-dimensional $C^r$-manifold
and $(U_j)_{j\in J}$ be a locally finite cover of~$M$
by relatively compact, open sets~$U_j$.
Then the following map is linear and a topological embedding{\rm:}
\begin{equation}\label{Psi}
\Psi\colon C^r_c(M,E)\to\bigoplus_{j\in J}C^r(U_j,E),\quad \Psi(\gamma)=(\gamma|_{U_j})_{j\in J}\,.
\end{equation}
\end{la}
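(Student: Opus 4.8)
The plan is to exhibit $\Psi$ as a continuous linear injection that possesses a continuous linear left inverse; any such map is automatically a topological embedding onto its image.

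First I would verify that $\Psi$ is well defined and injective. If $\gamma\in C^r_c(M,E)$ has support $K:=\Supp(\gamma)$, then local finiteness of the cover together with compactness of $K$ shows that $U_j\cap K\neq\emptyset$ for only finitely many $j\in J$; for every other index $\gamma|_{U_j}=0$, so $\Psi(\gamma)$ indeed lies in the algebraic direct sum. Linearity is clear, and injectivity follows since the $U_j$ cover $M$, whence $\gamma|_{U_j}=0$ for all $j$ forces $\gamma=0$. For continuity, recall that $C^r_c(M,E)=\bigcup_K C^r_K(M,E)$ carries the direct limit topology; by the universal property it suffices to check that each restriction $\Psi|_{C^r_K(M,E)}$ is continuous. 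For $\gamma\in C^r_K(M,E)$ only the finitely many indices in $J_K:=\{j\in J\colon U_j\cap K\neq\emptyset\}$ contribute, so $\Psi|_{C^r_K(M,E)}$ factors through the finite partial sum $\bigoplus_{j\in J_K}C^r(U_j,E)$, which embeds continuously into the full direct sum. As each component map $\gamma\mapsto\gamma|_{U_j}$ is continuous for the compact-open $C^r$-topologies, and a map into a finite product is continuous iff its components are, $\Psi$ is continuous.

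The decisive step is to construct a continuous left inverse. Using paracompactness of $M$ and $r\neq\omega$, choose a $C^r$-partition of unity $(h_j)_{j\in J}$ subordinate to $(U_j)_{j\in J}$, so that $K_j:=\Supp(h_j)$ is a compact subset of $U_j$ (as $U_j$ is relatively compact) and $\sum_{j\in J}h_j=1$ as a locally finite sum. Define
\[
\Theta\colon\bigoplus_{j\in J}C^r(U_j,E)\to C^r_c(M,E),\qquad (\eta_j)_{j\in J}\mapsto\sum_{j\in J}h_j\eta_j,
\]
where each $h_j\eta_j$ is read as the $C^r$-function on $M$ obtained by extending the product (defined on $U_j$) by $0$ off $K_j$; the sum is finite, since the argument lies in the direct sum. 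A direct computation gives $\Theta\circ\Psi=\id$, because $\Theta(\Psi(\gamma))=\sum_{j}h_j(\gamma|_{U_j})=\big(\sum_j h_j\big)\gamma=\gamma$.

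It thus remains to show $\Theta$ is continuous, which by the universal property of the locally convex direct sum reduces to continuity of the component maps $C^r(U_j,E)\to C^r_c(M,E)$, $\eta\mapsto h_j\eta$. I would factor each of these as $C^r(U_j,E)\to C^r_{K_j}(M,E)\hookrightarrow C^r_c(M,E)$: the inclusion is a continuous step of the direct limit, and the multiplication operator $\eta\mapsto h_j\eta$ (extended by $0$) is continuous because, by the Leibniz rule, the compact-open $C^r$-seminorms of $h_j\eta$ on $M$—which only see the compact set $K_j\sub U_j$—are dominated by fixed multiples, involving the derivatives of the fixed function $h_j$, of the $C^r$-seminorms of $\eta$ over a compact subset of $U_j$ containing $K_j$. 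This Leibniz estimate is the one genuinely technical point; it is the $C^r$-analogue of Lemma~\ref{lemB2}, which handles the case $r=0$. With $\Theta$ continuous and $\Theta\circ\Psi=\id$, the inverse $\Psi^{-1}=\Theta|_{\im(\Psi)}$ is continuous, so $\Psi$ is a topological embedding.
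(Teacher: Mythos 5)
Your proposal is correct, and its first half (linearity, injectivity, and continuity of $\Psi$ via the direct limit property, the finiteness of $J_K=\{j\in J\colon U_j\cap K\not=\emptyset\}$, and continuity of the restriction maps into the finite partial sum) coincides with the paper's argument. The two proofs part ways only in how the embedding property is extracted from the $C^r$-partition of unity $(h_j)_{j\in J}$: the paper composes $\Psi$ with the continuous map $\mu\colon\bigoplus_{j\in J}C^r(U_j,E)\to\bigoplus_{j\in J}C^r_{K_j}(U_j,E)$, $(\eta_j)_{j}\mapsto(h_j\eta_j)_{j}$, and then \emph{quotes} \cite[Lemma~1.3]{PJM} for the fact that $\mu\circ\Psi$ is a topological embedding, whereas you construct an explicit continuous linear left inverse $\Theta\colon(\eta_j)_{j}\mapsto\sum_{j}h_j\eta_j$ and invoke the retraction principle (if $\Theta\circ\Psi=\id$ with $\Theta$ continuous, then $\Psi^{-1}=\Theta|_{\im(\Psi)}$ is continuous). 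Your $\Theta$ is exactly the paper's $\mu$ followed by the summation map $\bigoplus_{j\in J}C^r_{K_j}(M,E)\to C^r_c(M,E)$, which is continuous by the universal property since each inclusion $C^r_{K_j}(M,E)\hookrightarrow C^r_c(M,E)$ is; so the underlying mechanism is the same, but your version is self-contained: it does not need the external embedding lemma, only the continuity of the multiplication operators $\eta\mapsto h_j\eta$, which the paper imports from \cite[Proposition~4.16]{ZOO} and which you justify by a Leibniz estimate (the $C^r$-analogue of Lemma~\ref{lemB2}, as you observe). That Leibniz-type continuity is the one point you only sketch; it is a standard fact (for $r=\infty$ note that each defining seminorm of the compact-open $C^\infty$-topology involves only finitely many derivatives, so the estimate over the compact set $K_j$ goes through), and modulo it your proof is complete.
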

\begin{proof}
The linearity is clear.
If $K\sub M$ is a compact subset, then $J_0:=$\linebreak
$\{j\in J\colon K\cap U_j\not=\emptyset\}$
is finite. The restriction~$\Psi_K$ of~$\Psi$ to $C^r_K(M,E)$
has image in $\bigoplus_{j\in J_0}C^r(U_j,E)\cong\prod_{j\in J_0}C^r(U_j,E)$
and is continuous as its components $C^r_c(M,E)\to C^r(U_j,E)$, $\gamma\mto\gamma|_{U_j}$
are continuous (cf.\ \cite[Lemma~3.7]{QUO}). Since $C^r_c(M,E)=\dl\,C^r_K(M,E)$\vspace{-.3mm} as
a locally convex space, it follows that~$\Psi$ is continuous.
Now pick a $C^r$-partition of unity $(h_j)_{j\in J}$ with $K_j:=\Supp(h_j)\sub U_j$.
Then each $m_{h_j}\colon C^r(M,E)\to C^r_{K_j}(M,E)$, $\gamma\mto h_j\cdot\gamma$
is continuous linear (e.g., as a special case of
\cite[Proposition~4.16]{ZOO}) and hence also the map $\mu \colon \bigoplus_{j\in J} C^r(U_j,E)\to
\bigoplus_{j\in J}C^r_{K_j}(U_j,E)$, $(\gamma_j)_{j\in J}\mto (h_j\gamma_j)_{j\in J}$.
Since $\mu\circ \Psi$ is an embedding \cite[Lemma~1.3]{PJM},
also $\Psi$ is a topological embedding.
\end{proof}
We also use a tool from \cite{SEM},
which is a version of \cite[Proposition~7.1]{MEA} with
parameters in a set~$U$ (for
countable~$J$, see already \cite[Proposition~6.10]{ZOO}):
\begin{la}\label{sumpardep}
Let $X$ be a finite-dimensional vector space, $U \sub X$ open,
$(E_j)_{j\in J}$ and $(F_j)_{j\in J}$ families of locally convex spaces,
$U_j\sub E_j$ open, $r\in \N_0\cup\{\infty\}$
and $f_j\colon U\times U_j\to F_j$ be a map. Assume that there is a finite set $J_0\sub J$ such that
$0\in U_j$ and $f_j(x,0)=0$ for all $j\in J\setminus J_0$ and $x\in U$.
Then $\bigoplus_{j\in J}U_j:=(\bigoplus_{j\in J}E_j)\cap \prod_{j\in J}U_j$
is open in $\bigoplus_{j\in J}E_j$, and we can consider
\[
f\colon U\times \bigoplus_{j\in J}U_j\to\bigoplus_{j\in J}F_j,\quad
f(x,(x_j)_{j\in J}):=(f_j(x,x_j))_{j\in J}.
\]
\begin{itemize}
\item[\rm(a)]
If $J$ is countable and each $f_j$ is $C^r$, then
$f$ is $C^r$.
\item[\rm(b)]
If $J$ is uncountable and each $f_j$ is $C^{r+1}$,
then $f$ is~$C^r$.
\end{itemize}
The conclusion of {\rm(b)} remains valid if each~$f_j$ is $C^{0,1}$
and the mappings $f_j$ and $d^{(0,1)}f_j \colon U\times U_j\times E_j\to F_j$
are $C^r$.\,\Punkt
\end{la}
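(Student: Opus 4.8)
The plan is to establish the three assertions in order, treating the $C^r$-statements by induction on $r$, with the base case $r=0$ (continuity) carrying the essential difficulty. First I would record that $\bigoplus_{j\in J}U_j$ is open: a point $a=(a_j)$ has finite support $S$ and satisfies $a_j\in U_j$ for every $j$ (so in particular $0=a_j\in U_j$ for $j\notin S$); choosing for each $j$ a balanced open $0$-neighbourhood $W_j\sub E_j$ with $a_j+W_j\sub U_j$, balancedness forces every element of the direct-sum neighbourhood $a+\Gamma(\bigcup_j W_j)$ to have $j$-th coordinate in $a_j+W_j\sub U_j$, so this neighbourhood lies in $\bigoplus_j U_j$. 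That $f$ maps into $\bigoplus_j F_j$ is immediate from $f_j(x,0)=0$ for $j\notin J_0$, which confines the support of $f(x,a)$ to $J_0\cup\Supp(a)$.

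For the smoothness I would induct on $r$, using the standard fact that a map is $C^r$ iff it is continuous and its directional derivative, as a function of point and direction, is $C^{r-1}$ (reading $C^\infty$ as $C^r$ for all finite $r$). The key point is that $df$ is again of the same diagonal shape: regarding the \emph{finite-dimensional} pair $(x,\xi)\in U\times X$ as the new base variable and $(a_j,v_j)\in U_j\times E_j$ as the new fibre variable, $df$ is assembled from the maps $(x,\xi,a_j,v_j)\mto d^{(1)}f_j((x,a_j),(\xi,v_j))$, and its support stays inside the finite set $J_0\cup\Supp(a)\cup\Supp(v)$. The vanishing hypothesis is inherited, since for $j\notin J_0$ we have $f_j(\cdot,0)\equiv0$, whence $d^{(1)}f_j$ vanishes at fibre argument $0$. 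If each $f_j$ is $C^{r+1}$, then each $d^{(1)}f_j$ is $C^r=C^{(r-1)+1}$, so the inductive hypothesis applied to this new family gives $df\in C^{r-1}$; combined with the base case this yields $f\in C^r$.

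The heart of the matter, and the step I expect to be the main obstacle, is continuity of $f$ at a point $(x^0,a^0)$, because $f$ is \emph{not} locally finite in the image: nearby points activate different, and arbitrarily many, coordinates. Splitting $J$ into the finite set $T:=J_0\cup\Supp(a^0)$ and its complement, the $T$-part involves finitely many continuous components and is controlled by a product estimate. For $j\notin T$ one has $a^0_j=0$ and $f_j(x^0,0)=0$, and I must force the tail $(f_j(x,a_j))_{j\notin T}$ into a prescribed $0$-neighbourhood $N=\Gamma(\bigcup_j N_j)$ of $\bigoplus_j F_j$. Writing a point of $a^0+\Gamma(\bigcup_j W_j)$ via $a_j=\lambda_j w_j$ with $w_j\in W_j$ (balanced, $\sub U_j$) and $\sum_j|\lambda_j|\le1$, I use $f_j(x,0)=0$ together with the fundamental theorem of calculus in the fibre direction:
\[
f_j(x,a_j)=\int_0^1 d^{(0,1)}f_j(x,ta_j,a_j)\,dt
=\lambda_j\int_0^1 d^{(0,1)}f_j(x,t\lambda_j w_j,w_j)\,dt.
\]
Since $d^{(0,1)}f_j$ is continuous and vanishes at the origin, shrinking the per-coordinate neighbourhood $W_j$ puts each integral into $N_j$; the extracted factor $\lambda_j$ then combines with $\sum_j|\lambda_j|\le1$ and the convexity of $N$ to place the whole tail in $N$. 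This step consumes exactly one fibre-direction derivative, which is why (b) holds, and tracking that only $f_j$ and $d^{(0,1)}f_j$ (continuous, and $C^r$) are actually used yields the sharper criterion stated after (b).

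In the countable case (a) no derivative is needed. Here I would instead exploit a summable budget across the coordinates: with the per-coordinate freedom to choose $W_n$ and a discount factor such as $2^{-n}$ on the $n$-th coordinate, the mere modulus of continuity of each $f_n$ at the fibre origin already forces the tail into $N$, and the induction then closes with the $C^{r-1}$ maps $d^{(1)}f_n$. It is precisely the unavailability of such a summable budget over uncountably many indices that necessitates the extra fibre derivative, so this is the only point where the countable/uncountable dichotomy enters.
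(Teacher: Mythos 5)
Your overall architecture is the right one, and it matches the approach of the sources the paper cites for this lemma (the paper itself gives no proof of Lemma~\ref{sumpardep}; it quotes it from \cite{SEM}, a parametrized version of \cite[Proposition~7.1]{MEA}, with \cite[Proposition~6.10]{ZOO} for countable $J$): openness of $\bigoplus_j U_j$ via balanced neighbourhoods, the observation that $df$ is again a map of the same diagonal type over the finite-dimensional base $U\times X$ so that one can induct on $r$, the fundamental-theorem-of-calculus factorization $f_j(x,\lambda_j w_j)=\lambda_j\int_0^1 d^{(0,1)}f_j(x,t\lambda_j w_j,w_j)\,dt$ in the uncountable case, and the summable $2^{-n}$ budget in the countable case.

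There is, however, a genuine gap at the continuity step, in both (a) and (b). You need, for every $j$ outside the finite set $T$ \emph{simultaneously}, that $d^{(0,1)}f_j(x,u_j,w_j)\in N_j$ (resp.\ $f_n(x,a_n)\in 2^{-n}N_n$ in case (a)) for all $u_j,w_j\in W_j$ and all $x$ in \emph{one fixed} neighbourhood of $x^0$. But continuity of $d^{(0,1)}f_j$ at the single point $(x^0,0,0)$ --- which is all you invoke (``continuous and vanishes at the origin, shrinking the per-coordinate neighbourhood $W_j$ puts each integral into $N_j$'') --- only yields a product neighbourhood $P_j\times W_j\times W_j$ whose $x$-factor $P_j$ depends on $j$. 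You are free to shrink $W_j$ coordinate-wise, but you cannot shrink the \emph{shared} variable $x$ coordinate-wise, and $\bigcap_{j\notin T}P_j$ need not be a neighbourhood of $x^0$. The missing ingredient is precisely where the hypothesis that $X$ is finite-dimensional enters (your argument never uses it, which is a warning sign): fix a \emph{compact} neighbourhood $\wb{P}\sub U$ of $x^0$; since $d^{(0,1)}f_j$ is continuous and vanishes on the compact set $\wb{P}\times\{0\}\times\{0\}$ (by linearity in the direction slot --- note that $d^{(0,1)}f_j(x,0,v)$ for $v\not=0$ need not vanish, so ``vanishes at the origin'' must be read this way), the tube lemma (Wallace's theorem) produces a single $0$-neighbourhood $W_j$ with $d^{(0,1)}f_j(\wb{P}\times W_j\times W_j)\sub N_j$; similarly $f_n(\wb{P}\times W_n)\sub 2^{-n}N_n$ in case (a), using that $f_n$ vanishes on $\wb{P}\times\{0\}$. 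With this insertion (and taking the $N_j$ closed and convex, so that the weak integral stays inside $N_j$) your tail estimates close; without it, the argument as written does not. The same caveat applies to your closing remark on the $C^{0,1}$ addendum, which needs this uniformization as well and, for $r\geq 1$, a more careful verification that the derived family $(d^{(1)}f_j)$ inherits the stated hypotheses.
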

{\bf Proof of Lemma~\ref{lemB1}.}
Given $g_0\in G$, let $U\sub G$ be a relatively compact, open neighbourhood of~$g_0$.
We show that $\pi_U\colon U\times C^\infty_c(G)\to C^\infty_c(G)$, $(g,\gamma)\mto\pi(g,\gamma)$ is smooth.
To this end, let $(U_j)_{j\in J}$ be a locally finite cover of~$G$ by relatively compact,
open sets~$U_j$. Then also $(U^{-1}U_j)_{j\in J}$ is locally finite.\footnote{If $K\sub G$ is compact,
then $U^{-1}U_j\cap K\not=\emptyset$ $\Leftrightarrow$
$U_j\cap UK\not=\emptyset$.}
As a consequence, both
$\Psi\colon C^\infty_c(G)\to\bigoplus_{j\in J}C^\infty(U_j)$, $\Psi(\gamma):=(\gamma|_{U_j})_{j\in J}$
and the corresponding restriction map $\Theta\colon C^\infty_c(G)\to \bigoplus_{j\in J}C^\infty(U^{-1}U_j)$
are linear topological embeddings (Lemma~\ref{embdisum}).
Since
\[
\im(\Psi)=\{(\gamma_j)_{j\in J}\colon (\forall i,j\in J)(\forall x\in U_i\cap U_j)\;\gamma_i(x)=\gamma_j(x)\}
\]
is a closed vector subspace of $\bigoplus_{j\in J}C^\infty(U_j)$,
the map $\pi_U$ will be smooth if we can show that $\Psi\circ \pi_U$ is smooth
(cf.\ \cite[Lemma~10.1]{BGN}).
For each $j\in J$, the evaluation map $\ve_j\colon C^\infty(U^{-1}U_j)\times U^{-1}U_j\to\C$,
$\ve_j(\gamma,x):=\gamma(x)$ is smooth
(see \cite{GaN} or \cite[Proposition~11.1]{ZOO}).
Lemma~\ref{explaw} shows that
\[
\Xi_j\colon U\times C^\infty(U^{-1}U_j)\to C^\infty(U_j), \quad \Xi_j(g,\gamma)(x):=\gamma(g^{-1}x)
\]
is $C^\infty$, as $\wh{\Xi_j}\colon U\times C^\infty(U^{-1}U)\times U_j\to \C$,
$\wh{\Xi_j}(g,\gamma,x):=\gamma(g^{-1}x)=\ve_j(\gamma,g^{-1}x)$ is smooth.
Then
\[
\Xi \colon U\times \bigoplus_{j\in J}C^\infty(UU_j)\to \bigoplus_{j\in J}C^\infty(U_j),\quad
\Xi(x,(\gamma_j)_{j\in J}):=(\Xi_j(x,\gamma_j))_{j\in J}
\]
is $C^\infty$, by Lemma~\ref{sumpardep}.
Hence $\Psi\circ \pi_U=   \Xi \circ (\id_U\times \Theta)$ (and hence $\pi_U$) is $C^\infty$.\,\vspace{2.4mm}\Punkt

\noindent
{\bf Proof of Lemma~\ref{lemB2}.}
Since $C^0_c(M)=\dl\,C^0_K(M)$
as a locally convex space, the linear map $m_f$ will be continuous
if $C^0_K(M)\to C^0_K(M,E)$,
$\gamma\mto \gamma f$ is continuous. This is the case by \cite[Lemma~3.9]{QUO}.\,\vspace{2.4mm}\Punkt

\noindent
{\bf Proof of Lemma~\ref{pardep}.}
It suffices to prove the lemma for $r\in \N_0$.
By \cite[Lemma~A.2]{PJM}, $g$ is continuous.
If $r>0$, $k\in \N_0$ with $k\leq r$, $p\in P$ and $q_1,\ldots, q_k\in X$,
there is $\ve>0$ such that
\[
h(t_1,\ldots, t_k):=g(p+{\textstyle\sum_{j=1}^k} t_kq_j)
\]
is defined for $(t_1,\ldots, t_k)$ in some open $0$-neighbourhood $W\sub \R^k$.
By \cite[Lemma~A.3]{PJM}, $h\colon W\to E$ is $C^k$,
and $d^{(k,0)}g(x,p,q_1,\ldots, q_k)=\partial^{(1,\ldots,1)}h(0,\ldots, 0)$\linebreak
$=
\int_K(D_{(q_k,0)}\cdots D_{(q_1,0)}f)(p,x)\,d\mu(x)=
\int_Kd^{(k,0)}f(p,x,q_1,\ldots, q_k)\, d\mu(x)$. By the case $r=0$,
the right hand side is continuous in $(p,q_1,\ldots, q_k)$.
So~$g$~is~$C^r$.\,\vspace{2.4mm}\Punkt

\noindent
{\bf Proof of Lemma~\ref{csuppE}.}
Let $K:=\Supp(\gamma)\sub G$.
For $g\in G$, we have $\pi_w(g)=\pi(g,w)
=\int_G\gamma(y)\pi(g,\pi(y,v)\,d\lambda_G(y)
=\int_G\gamma(y)\pi(gy,v)\,d\lambda_G(y)
=\int_G\gamma(g^{-1}y)\pi(y,v)\,d\lambda_G(y)$,
using left invariance of Haar measure for the last equality.
Given $g_0\in G$, let $U\sub G$ be an open, relatively compact
neighbourhood of~$g_0$.
As $g^{-1}y\in K$ implies $y\in \wb{U}K$
for $g\in U$ and $y\in G$,~get
\[
\pi_w(g)=\int_{\wb{U}K}\gamma(g^{-1}y)\pi(y,v)\,d\lambda_G(y)\quad\mbox{for all $g\in U$.}
\]
Since $U\times \wb{U}K\to E$, $(g,y)\mto
\gamma(g^{-1}y)\pi(y,v)$ is a $C^{\infty,0}$-map,
Lemma~\ref{pardep} shows that $\pi_w|_U$ is smooth.
Hence $\pi_w$ is smooth and thus $w\in E^\infty$ indeed.
Testing equality with continuous linear functionals
and using Fubini's theorem and then left invariance of Haar measure,
one verifies that
\begin{eqnarray*}
\Pi(\gamma*\eta,v)
&=&\int_G\int_G\gamma(z)\eta(z^{-1}y)\pi(y,v)\,d\lambda_G(z)d\lambda_G(y)\\
&=&\int_G\gamma(z) \int_G\eta(z^{-1}y)\pi(y,v)\,d\lambda_G(y)d\lambda_G(z)\\
&=& \int_G\gamma(z) \int_G\eta(y)\pi(zy,v)\,d\lambda_G(y)d\lambda_G(z)\\
&=&\int_G\gamma(z) \pi(z,\Pi(\eta,v))\, d\lambda_G(z)\,=\,\Pi(\gamma,\Pi(\eta,v)).
\end{eqnarray*}
Hence $E$ (and $E^\infty$) are $C^\infty_c(G)$-modules.\,\vspace{2.4mm}\Punkt

\noindent
{\bf Proof of Lemma~\ref{extGC}.}
If $\pi_v$ has a $\C$-analytic extension $\wt{\pi}_v$ to $GV$ for some open identity neighbourhood
$V\sub G_\C$, then (like any $\C$-analytic map) $\wt{\pi}_v$ is $\R$-analytic~\cite{GaN}.
As inclusion $j\colon G\to G_\C$ is $\R$-analytic,
so is $\pi_v=\wt{\pi}_v\circ j$.\\[2.4mm]
Conversely, assume that $\pi_v$ is $\R$-analytic. There is
an open $0$-neighbourhood $W\sub L(G)_\C$
such that $\phi:=\exp_{G_\C}|_W$ is a $\C$-analytic diffeomorphism
onto an open subset $\phi(W)$ in~$G_\C$,
$\phi(W\cap L(G))=G\cap \phi(W)$,
and $\psi:=\phi|_{W\cap L(G)}$
is an $\R$-analytic diffeomorphism onto its image in~$G$.
Then $\pi_v\circ \psi$
is $\R$-analytic and hence extends to a $\C$-analytic map
$f\colon \wt{W}\to E$ for some open set $\wt{W}\sub W$
containing $W\cap L(G)$,
and thus $\phi(\wt{W})\to E$, $z\mto f(\phi^{-1}(z))$
is a $\C$-analytic extension of $\pi_v|_{W\cap L(G)}$.
We now find $n\in \N$ such that $V_n\sub \phi(\wt{W})$
and $U_n\sub \im(\psi)$,
using the notation from \cite{GKS}.
Hence $v\in \wt{E}_n$ and hence $v\in E_{4n}$,
by \cite[Lemma~3.2]{GKS}.\,\vspace{2.4mm}\Punkt

\noindent
{\bf Proof of Lemma~\ref{betterK}.}
For each $k\in K$, there are $g_k\in G$ and $v_k\in V_n$
such that $k=g_kv_k$. Let $P_k\sub V_n$ be a compact
neighbourhood of $v_k$. Then $(g_kP_k^0)_{k\in K}$
is an open cover of $K$, whence there exists a finite subset $F\sub K$
such that $K\sub\bigcup_{k\in F}g_kP_k$. Then $P:=\bigcup_{k\in F}P_k$
is a compact subset of $V_n$ and $GK\sub GP$.\,\vspace{2.4mm}\Punkt

\noindent
{\bf Proof of Lemma~\ref{changeK}.}
If $z\in K$, then $z=h\ell$ for some $h\in G$ and $\ell \in L$.
Then $h=z\ell^{-1}\in KL^{-1}$.
For $g\in G$, we have
\[
|\gamma(z^{-1}g)|e^{Nd(g)}=|\gamma(\ell^{-1}(h^{-1}g))|e^{Nd(g)}
\leq e^{Nd(h)}|\gamma(\ell^{-1}(h^{-1}g))|e^{Nd(h^{-1}g)}
\]
as $d(g)=d(h(h^{-1}g))\leq d(h)+d(h^{-1}g)$. The assertion follows.\,\vspace{2.4mm}\Punkt

\noindent
{\bf Proof of Lemma~\ref{cxpardep}.}
Let $K_1\sub K_2\sub\cdots$ be compact subsets of~$Y$ such that $Y=\bigcup_{n\in \N}K_n$.
Then $g_n(z):=\int_{K_n} f(z,y)\,d\mu(y)$ exists for all $z\in U$ \cite[1.2.3]{Her}.
By Lemma~\ref{pardep}, the map
$g_n\colon U\to E$ is $C^1$ with
$dg_n(z,w)=\int_{K_n}d^{(1,0)}f(z,y,w)\,d\mu(y)$,
which is $\C$-linear in $w\in Z$.
As~$E$ is sequentially complete, this implies
that $g_n$ is $\C$-analytic \cite[1.4]{STU}.
For each continuous seminorm $q$ on~$E$,
we have $\int_Y q(f(z,y))\,d\mu(y)\leq\int_Y m_q(y)\,d\mu(y)<\infty$.
Since $\lim_{n\to\infty}\int_{K_n}m_q(y)\,d\mu(y)=\int_Y m_q(y)\,d\mu(y)$,
given $\ve>0$ there exists $N\in \N$
such that $\int_{Y\setminus K_n}m_q(y)\,d\mu(y)<\ve$ for all $n\geq N$.
Hence
\begin{equation}\label{cauchy1}
q(g_\ell(z)-g_n(z))\leq \int_{K_\ell\setminus K_n}m_q(y)\,d\mu(y)\;<\;\ve
\end{equation}
for all $\ell \geq n\geq N$, showing that $(g_n(z))_{n\in \N}$ is a
Cauchy sequence in~$E$ and hence convergent to some
element $g(z)\in E$. For each continuous linear functional $\lambda\colon E\to\C$,
we have $|\lambda(f(z,y))|\leq m_{|\lambda|}(y)$, whence the function $|\lambda(f(z,.))|$ is $\mu$-integrable
and $\int_Y\!\lambda(f(z,y))d\mu(y)
\!=\!\lim_{n\to\infty}\!\int_{K_n}\!\! \lambda(f(z,y))d\mu(y)$\linebreak
$=\lim_{n\to\infty}\lambda(g_n(z)) 
=\lambda(\lim_{n\to\infty}g_n(z))=\lambda(g(z))$.
Hence $g(z)$ is the weak integral $\int_Yf(z,y)\,d\mu(y)$.
As $\int_Y q(f(z,y))\,d\mu(y)\leq \int_Y m_q(y)\,d\mu(y)<\infty$,
the integral $\int_Yf(z,y)\,d\mu(y)$ is absolutely convergent.
Letting $\ell \to\infty$ in (\ref{cauchy1}), we see that
$q(g(z)-g_n(z))\leq \ve$ for all $z\in U$ and $n\geq N$.
Thus $g_n\to g$ uniformly.
Since $E$ is sequentially complete,
the uniform limit~$g$ of $\C$-analytic functions is $\C$-analytic \cite[Proposition~6.5]{BaS},
which completes the proof.\,\vspace{2.4mm}\Punkt

\noindent
{\bf Proof of Lemma~\ref{intrep1}}
(see \cite[4.3 on p.\,1592]{GKS} for
an alternative argument).
Given $z_0\in V_n$ and $x\in G$,
let $K\sub V_n$ be a compact neighbourhood
of $z_0$.
If $q$ is a $G$-continuous seminorm on~$E$, then there exist
$C_q\geq 0$ and $m\in \N_0$ such that $q(\pi(y,v))\leq q(v)C_qe^{md(y)}$
(see (\ref{good2})).
Choose $\ell\in \N_0$ such that $C:=\int_Ge^{-\ell d(y)}\,d\lambda_G(y)<\infty$
(see (\ref{secd})).
For $N\in \N_0$ with $N\geq m+\ell$, we obtain,
using that
$d(y)=d(xx^{-1}y)\leq d(x)+d(x^{-1}y)$,
\begin{eqnarray}
q(\wt{\gamma}(z^{-1}y)\pi(y,v)) & \leq & |\wt{\gamma}(k^{-1}x^{-1}y)|q(\pi(y,v))\notag \\
&\leq & |\wt{\gamma}(k^{-1}x^{-1}y)|e^{Nd(x^{-1}y)}C_q e^{(m-N)d(y)}e^{Nd(x)}q(v)\notag\\
&\leq&  C_qe^{Nd(x)}\|\gamma\|_{K,N}e^{-\ell d(y)}q(v) \,\,\label{dochuse}
\end{eqnarray}
for all $z=xk$ with $k \in K$, and all $y\in G$.
Hence Lemma~\ref{cxpardep} shows that the integral in (\ref{integals})
converges absolutely for all $z\in x K^0$
and defines a $\C$-analytic function $xK^0\to E$.
Since $xz_0\in GV_n$ was arbitrary,
the integral in~(\ref{integals}) exists
for all $z\in GV_n$ and defines a $\C$-analytic function $\eta \colon GV_n\to E$.
For $x\in G$,
\begin{eqnarray*}
\pi(x,w)&=&
\pi(x,.)\left(\int_G\gamma(y)\pi(y,v)\,d\lambda_G(y)\right)
\,=\,\int_G\gamma(y)\pi(x,\pi(y,v))\,d\lambda_G(y)\\
&=& \int_G\gamma(y)\pi(xy,v)\,d\lambda_G(y)
\,=\,\int_G\gamma(x^{-1}y)\pi(y,v)\,d\lambda_G(y)\,=\,\eta(x)
\end{eqnarray*}
by left invariance of Haar measure.
Hence $\eta$ is a $\C$-analytic extension of $\pi_w$
to $GV_n$, whence $w\in E_n$ and $\wt{\pi_w}=\eta$.\,\vspace{2.4mm}\Punkt

\noindent
{\bf Proof of Lemma~\ref{sin}.}
Since $L(G)$ is a compact Lie algebra,
there exists a positive definite bilinear form $\langle.,.\rangle \colon L(G)\times L(G)\to \R$
making $e^{\ad(x)}=\Ad(\exp_G(x))$ an isometry for each $x\in L(G)$.
Since $G$ is generated by the exponential image, it follows that
$\Ad(g)$ is an isometry for each $g\in G$.
Now use the same symbol, $\langle.,.\rangle$,
for the unique extension to a hermitean form
$L(G)_\C\times L(G)_\C\to\C$. Write $B_r\sub L(G)_\C$
for the open ball of radius $r$ around $0$.
After replacing the form by a positive multiple
if necessary, we may assume that $\exp_{G_\C}$
restricts to a homeomorphism $\phi$ from $B_1$
onto a relatively compact, open subset of $G_\C$.
Then the sets $V_n:=\exp_{G_\C}(B_{1/n})$
form a basis of relatively compact, connected open
identity neighbourhoods, such that $\wb{V_{n+1}}\sub V_n$
and $gV_ng^{-1}=\exp_{G_\C}(\Ad(g)(B_{1/n}))
=\exp_{G_\C}(B_{1/n})=V_n$.
If $K\sub V_n$ is compact, then $A:=\phi^{-1}(K)$
is a compact subset of $B_{1/n}$ and thus $r:=\max\{\sqrt{\langle x,x\rangle}\colon x\in A\}<1/n$.
Then $\exp_{G_\C}(\wb{B_r})$
is a compact, conjugation-invariant subset of $G$
which contains $K$ and thus
$\wb{\{gxg^{-1}\colon g\in G,\,x\in K\}}\sub
\exp_{G_\C}(\wb{B_r})\sub V_n$.\vspace{2.4mm}\,\Punkt

\noindent
{\bf Proof of Lemma~\ref{intrnew}.}
Let $x_0\in G$, $z_0\in V_n$ and $K\sub V_n$
be a compact neighbourhood of~$y_0$.
Then $K_1:=\wb{\{gzg^{-1}\colon g\in G,\,z\in K\}}\sub V_n$
is compact, by choice of~$V_n$.
If $q$ is a $G$-continuous seminorm on~$E$, then there exist
$C_q\geq 0$ and $m\in \N_0$ such that $q(\pi(y,v))\leq q(v)C_qe^{md(y)}$
(see (\ref{good2})).
Then $\|v\|_{K_1,q}:=\sup q(\wt{\pi}_v(K_1))<\infty$.
Choose $\ell\in \N_0$ such that $C:=\int_Ge^{-\ell d(y)}\,d\lambda_G(y)<\infty$
(see (\ref{secd})).
Note that $\wt{\pi}_v(xzy)=\wt{\pi}_v(xyy^{-1}zy)=\pi(xy,\wt{\pi}_v(y^{-1}zy))$
for all $x\in G$, $z \in K$ and $y\in G$, where
$y^{-1}zy\in K_1$. Thus
\begin{eqnarray}
q(\gamma(y)\wt{\pi}_v(xzy)) &=& |\gamma(y)|q(\pi(xy,\wt{\pi}_v(y^{-1}zy))\,=\,
|\gamma(y)|C_qe^{md(xy)}q(\wt{\pi}_v(y^{-1}zy))\notag\\
&\leq&  C_q\|v\|_{K_1,q}|\gamma(y)|e^{md(y)}e^{md(x)}\notag\\
&\leq&
C_q\|v\|_{K_1,q}e^{md(x)}\|\gamma\|_{m+\ell}e^{-\ell d(y)},\,\,\label{dochunew}
\end{eqnarray}
using the notation from (\ref{reusno}).
Hence Lemma~\ref{cxpardep} shows that the integral in (\ref{integnew})
converges absolutely for all $z\in x K^0$
and defines a $\C$-analytic function $xK^0\to E$.
Notably, this holds for $x=x_0$.
Since $x_0z_0\in GV_n$ was arbitrary,
the integral in~(\ref{integnew}) exists
for all $z\in GV_n$ and defines a $\C$-analytic map $\eta \colon GV_n\to E$.
For $x\in G$, we have
$\pi(x,w)=
\int_G\gamma(y)\pi(x,\pi(y,v))\,d\lambda_G(y)=\eta(x)$.
Hence $\eta$ is a $\C$-analytic extension of $\pi_w$
to $GV_n$, and thus $w\in E_n$ and $\wt{\pi}_w=\eta$.\,\vspace{2.4mm}\Punkt

\noindent
{\bf Proof of Lemma~\ref{reach}.}
We need only show that $\Pi$ is separately continuous.
In fact, $\cA(G)$ is barrelled,
being a locally convex direct limit of the Fr\'{e}chet spaces $\cA_n(G)$
\cite[II.7.1 and II.7.2]{SaW}.
Hence, if $\Pi$ is separately continuous, it
automatically is hypocontinuous in its second argument
\cite[II.5.2]{SaW} and hence sequentially continuous
(see \cite[p.\,157, Remark following \S40, 1., (5)]{Koe}).\\[2.4mm]
Let $\Pi_n\colon \cA_n(G)\times E\to E^\omega$ be the restriction of
$\Pi$ to $\cA_n(G)\times E$. Then $\Pi_n$ is continuous (see (\ref{mpxx})).
For $\gamma\in \cA(G)$, there exists $n\in \N$ such that $\gamma\in \cA_n(G)$.
Thus $\Pi(\gamma,.)=\Pi_n(\gamma,.)\colon E\to E^\omega$
is continuous. If $v\in E$, then the linear map $\Pi(.,v)=\dl\,\Pi_n(.,v)\colon \cA(G)\to E^\omega$
is continuous.\,\vspace{2.4mm}\Punkt

\noindent
{\bf Proof of Lemma~\ref{boundd}.}
If $K$ was unbounded,
we could find $x_1,x_2,\ldots$ in $K$
and a continuous seminorm~$q$ on~$E$
such that $q(x_n)\to \infty$ as $n\to\infty$.
Then $(x_n)_{n\in \N}$ does not have a convergent subsequence, contradiction.\,\vspace{2.4mm}\Punkt

\noindent
{\bf Proof of Lemma~\ref{Montel}.}
Since $\wt{\cA}_n(G)$ is a Fr\'{e}chet space and hence barrelled,
it only remains to show that each bounded
subset $M\sub \wt{\cA}_n(G)$ is relatively compact.
Because $\wt{\cA}_n(G)$ is complete, we need only
show that~$M$ is precompact.
Thus, for each compact set $K\sub V_n$, $N\in \N_0$
and $\ve>0$, we have to find a finite subset
$\Gamma\sub M$ such that
\begin{equation}\label{givesprec}
M\; \sub \; \bigcup_{\gamma\in \Gamma}\{\eta\in \wt{\cA}_n(G)\colon
\|\eta-\gamma\|_{K,N}\leq \ve\}.
\end{equation}
Since~$M$ is bounded, $C:=\sup\{ \|\gamma\|_{K,N+1} \colon \gamma\in M \}<\infty$.
Choose $\rho>0$ with $2Ce^{-\rho}<\ve$.
Then $K_1:=\{g\in G\colon d(g)\leq \rho\}$ is a compact subset of~$G$ (see \cite[p.\,74]{Gar}),
and hence $L:=K^{-1}K_1$ is compact in $G_\C$.
The inclusion map $\wt{\cA}_n(G)\to \cO(V_nG)$ being continuous,
$M$ is bounded also in the space $\cO(V_nG)$
of $\C$-analytic functions on the finite-dimensional
complex manifold $\cO(V_nG)$, equipped with
the compact-open topology, which is a prime example
of a Montel space.
Hence, we find a finite subset $\Gamma\sub M$
such that
\begin{equation}\label{selects}
(\forall \eta\in M)\;(\exists \gamma\in\Gamma)\;\; \|(\eta-\gamma)|_L\|_\infty \;<\; e^{-N\rho}\ve\,.
\end{equation}
Given $\eta\in M$, pick $\gamma\in \Gamma$ as in (\ref{selects}).
Let $z\in K$, $g\in G$.
If $d(g)\geq \rho$, then
\[
|\eta(z^{-1}g)-\gamma(z^{-1}g)|e^{Nd(g)}\leq (|\eta(z^{-1}g)|+|\gamma(z^{-1}g)|)e^{(N+1)d(g)}e^{-d(g)}
\leq 2Ce^{-\rho}<\ve\,.
\]
If $d(g)<\rho$, then $z^{-1}g\in L$ and thus
\[
|\eta(z^{-1}g)-\gamma(z^{-1}g)|e^{Nd(g)}\leq e^{-N\rho}\ve e^{Nd(g)}<\ve,
\]
by (\ref{selects}). Hence $\|\eta-\gamma\|_{K,N}\leq \ve$,
showing that (\ref{givesprec}) holds for $\Gamma$.\,\Punkt

\noindent
{\small
Helge  Gl\"{o}ckner, Universit\"at Paderborn, Institut f\"{u}r Mathematik,\\
Warburger Str.\ 100, 33098 Paderborn, Germany.\\[2mm]
Email: {\tt  glockner\at{}math.upb.de}}
\end{document}